\newtheorem{theorem}{Theorem}[section]
\newtheorem{corollary}[theorem]{Corollary}
\newtheorem{definition}[theorem]{Definition}
\newtheorem{question}[theorem]{Question}
\newtheorem{lemma}[theorem]{Lemma}
\newtheorem{proposition}[theorem]{Proposition}
\newtheorem{remarka}[theorem]{Remark}
\newenvironment{psmallmatrix}
  {\left(\begin{smallmatrix}}
  {\end{smallmatrix}\right)}
\newtheorem*{T*}{Theorem}
\newtheorem*{A*}{Proposition}
\newtheorem*{Cor*}{Corollary}
\theoremstyle{definitionbreak}
\theoremstyle{definitionbreak}\newtheorem*{D*}{Definition}
\title{Action convergence of operators and graphs}
\author{\'Agnes Backhausz, Bal\'azs Szegedy}
\date\today
\keywords{graph limits, operator, random matrix}
\subjclass[2000]{05C50}
\begin{document}

\begin{abstract} We present a new approach to graph limit theory which unifies and generalizes the two most well developed directions, namely dense graph limits (even the more general $L^p$ limits) and Benjamini--Schramm limits (even in the stronger local-global setting). We illustrate by examples that this new framework provides a rich limit theory with natural limit objects for graphs of intermediate density. Moreover, it provides a limit theory for bounded operators (called $P$-operators) of the form $L^\infty(\Omega)\to L^1(\Omega)$ for probability spaces $\Omega$. We introduce a metric to compare $P$-operators (for example finite matrices) even if they act on different spaces. We prove a compactness result which implies that in appropriate norms, limits of uniformly bounded $P$-operators can again be represented by $P$-operators. We show that limits of operators representing graphs are self-adjoint, positivity-preserving $P$-operators called graphops. Graphons, $L^p$ graphons and graphings (known from graph limit theory) are special examples for graphops. We describe a new point of view on random matrix theory using our operator limit framework. 
\end{abstract}

\maketitle

\section{Introduction}

A fundamental question posed in the emerging field of graph limit theory is the following: {\it How can we measure similarity of graphs?} Each branch of graph limit theory is based on a similarity metric \cite{lovaszbook}. Experience shows that, to be useful in applications, the similarity metric should satisfy a few natural properties. 
\begin{enumerate}
\item {\bf (Expressive power)}{\it The similarity metric should be fine enough to provide a rich enough picture of graph theory. }
\item {\bf (Compactness)}{\it The similarity metric should be coarse enough to provide many interesting Cauchy convergent graph sequences.}
\item {\bf (Limit objects)}{\it Limits of Cauchy convergent sequences of graphs should be naturally represented by "graph-like" analytic objects.}

\end{enumerate}
The tension between the first and the second requirement makes the search for useful similarity metrics especially interesting. The so-called dense graph limit theory is based on a set of equivalent metrics. One of them is the $\delta_\square$-distance \cite{BCLTV, LSz, LSz2}. Convergence in $\delta_\square$ is equivalent to the convergence of subgraph densities. The completion of the set of all graphs in this metric is compact, and thus every graph sequence has a convergent sub-sequence, which is a very useful property. A shortcoming of dense graph limit theory is that sparse graphs are considered to be similar to the empty graph and thus it has not enough expressive power to study graphs in which the number of edges is sub-quadratic in the number of vertices. Another similarity notion was introduced by Benjamini and Schramm \cite{BS} to study bounded degree graphs that are basically the sparsest graphs. This metric requires an absolute bound for the largest degree and hence it can not be used for graphs with super-linear number of edges. Graph sequences in which the number of edges is super-linear and sub-quadratic in terms of the number of vertices are called graphs of {\it intermediate density}. 

Finding useful similarity notions for graphs of intermediate density is a major research direction in graph limit theory. There are many promising non-equivalent approaches to this subject \cite{BCCH, BCCZ, BCCZ2, frenkel, janson, nesetrilkonyv, nesetril2, sparse}. 
However, none of them provides a real unification of the most well-developed branches: dense graph limit theory (together with its $L^p$ extension \cite{BCCZ, BCCZ2}), Benjamini--Schramm limit theory (together with the stronger local-global convergence, see e.g.\ \cite{BR, HLSz}) and corresponding limit objects: graphons, $L^p$ graphons and graphings. 

{\it In this paper we take a new point of view on the subject. Instead of considering graphs as static structures, we focus more on the action and dynamics generated by graphs. One can associate various operators with graphs. The most well-known examples are: adjacency operators, Laplace operators and Markov kernel operators (related to random walks). We formulate a framework theory of operator convergence and apply it to graph theory through representing operators.}

The dynamical aspect is present in many existing limit theories. However, it has not been exploited to unify them. Limit objects such as graphons and graphings act on $L^2$ spaces of probability spaces. (Even the so called $L^p$ graphons can be viewed as operators of the form $L^q(\Omega)\to L^p(\Omega)$, where $\Omega$ is a probability space.) While graphons are compact operators represented by measurable functions of the form $W:[0,1]^2\to [0,1]$, graphings are non-compact and are represented by singular measures on $[0,1]^2$ concentrated on edge sets of bounded degree Borel graphs \cite{EG, HLSz}. {\it A common property of all of these objects is that they are bounded operators in an appropriate norm and they act on function spaces of random variables.} Graphons and graphings are bounded in the usual $L^2$ operator norm $\|.\|_{2\to 2}$, and $L^p$ graphons are bounded in the $\|.\|_{q\to p}$ norm, where $p^{-1}+q^{-1}=1$.

In spite of the fact that existing convergence notions for graphons and graphings are intuitively similar, the exact connection has not yet been explained from a functional analytic point of view. In this paper we introduce a general convergence notion for operators acting on functions on probability spaces. We show that graphon convergence, $L^p$ graphon convergence and local-global convergence of graphings are all special cases of this general convergence notion. Moreover, we obtain a very general framework for graph limit theory by studying the convergence of operator representations of graphs. 

We also demonstrate that the new limit theory for operators has applications beyond graph theory through a new approach to random matrix theory.  
An important motivation for this paper comes from a previous result by the authors which proves Gaussianity for almost eigenvectors of random regular graphs using graph limit techniques (local-global limits) and information theory \cite{almost}. It is very natural to ask if similar limit techniques can be used to study dense random matrices such as matrices with i.i.d $\pm 1$ entries. Available graph limit techniques proved to be too weak for this problem. Dense random matrices (when regarded as weighted graphs) converge to trivial objects in dense graph limit theory. Note that an interesting connection between dense graph limits and random matrices was investigated in  \cite{male, zhu}.

We propose a new limit approach for matrices, graphs and operators which is based on the following quite simple and natural probabilistic view point on matrix actions.  Let $A\in\mathbb{R}^{n\times n}$ be an arbitrary matrix and let $v\in\mathbb{R}^n$ be a vector. Let $M$ denote the $2\times n$ matrix whose rows are $v$ and $vA$. Each column of $M$ is an element in $\mathbb{R}^2$, thus, by choosing a random column, we obtain a probability distribution $\mu_v$ on $\mathbb{R}^2$. The following interesting question arises:

\noindent{\it How much do we learn about $A$ if we know the set of all probability measures $\mu_v$ arising this way?}

It is easy to see for example that $A$ is the identity matrix if and only if $\mu_v$ is supported on the line $y=x$ in $\mathbb{R}^2$ for every $v\in\mathbb{R}^n$. The matrix $A$ is degenerate if and only if there is a measure $\mu_v$ which is not the Dirac measure $\delta_{(0,0)}$, but it is supported on the line $y=0$.

Philosophically, we regard each measure $\mu_v$ as an observation associated with the action of $A$ and we regard the set of all possible observations $\{\mu_v : v\in \mathbb{R}^n\}$ as the profile of $A$. A useful fact about  profiles is that they allow us to compare matrices of different sizes, since they are sets of probability measures on $\mathbb{R}^2$ independently of the sizes of the matrices. Another nice fact is that the profile of $A$ contains rather detailed information about the eigenvalues of $A$ and the entry distributions of the corresponding eigenvectors. It is easy to see that $v$ is an eigenvector with eigenvalue $\lambda$ if and only if the measure $\mu_v$ is supported on the line $y=\lambda x$ in $\mathbb{R}^2$. The entry distribution of $v$ is simply the distribution of the $x$ coordinates in $\mu_v$. 

   
 

\begin{figure}[!tbp]
  \centering
  \begin{minipage}[b]{0.4\textwidth}
    \includegraphics[width=\textwidth]{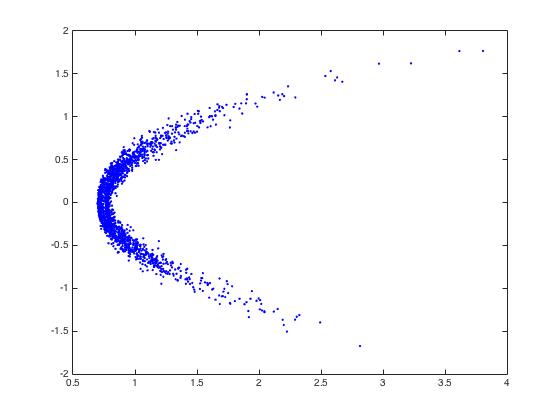}
  \end{minipage}
  \hfill
  \begin{minipage}[b]{0.4\textwidth}
    \includegraphics[width=\textwidth]{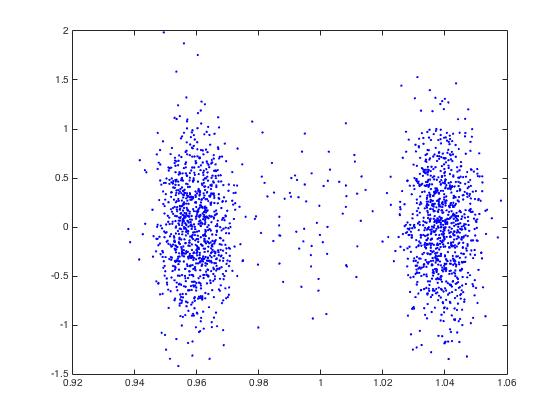}
  \end{minipage}
  \caption{Two probability measures of the form $\mu_v$ in the profile of a $2000\times 2000$ random matrix.} \label{figmeas}
\end{figure}

It is useful to extend this idea to the case when $k$ vectors $v_1,v_2,\dots,v_k$ are considered simultaneously. (For some technical reasons we will assume that $v_1,v_2,\dots,v_k$ are in $[-1,1]^n$.) In this case $M$ is the $2k\times n$ matrix with rows $\{v_i\}_{i=1}^k$ and $\{v_iA\}_{i=1}^k$. A random column in $M$ yields a probability distribution on $\mathbb{R}^{2k}$, and the $k$-profile $\mathcal{S}_k(A)$ of $A$ is the collection of all such probability measures. We regard $A$ and $B$ to be similar if for small natural numbers $k$ their $k$-profiles are close in the Hausdorff metric $d_H$ defined for sets of probability measures on $\mathbb{R}^{2k}$ based on the L\'evy--Prokhorov metric for individual measures (precise definition will be given in Section \ref{chaplim}.) This similarity can be metrized by the formula
$$d_M(A,B):=\sum_{k=1}^\infty 2^{-k}d_H(S_k(A),S_k(B)).$$ A sequence of matrices converges in this metric if for every fixed $k$, their $k$-profiles converge in $d_H$.

The above ideas generalize naturally to the framework where $(\Omega,\mathcal{A},\mu)$ is a probability space and $A$ is an operator of the form $A:L^\infty(\Omega)\to L^1(\Omega)$. Such operators with an appropriate boundedness condition will be called {\bf $P$-operators} (Definition \ref{def:pop}). If $v\in L^\infty(\Omega)$, then both $v$ and $vA$ are random variables, and their joint distribution is a measure $\mu_v$ on $\mathbb{R}^2$. This allows us to define $k$-profiles, metric and convergence for $P$-operators similarly as we defined them for matrices. Note that matrices are special $P$-operators, where the probability space is $[n]:=\{1,2,\dots,n\}$ with the uniform distribution. In this case $L^\infty([n])=L^1([n])=\mathbb{R}^{[n]}$, and every matrix is a $P$-operator. Note that both graphons (symmetric measurable functions of the form $W:[0,1]^2\to[0,1]$) and graphings (certain bounded degree Borel graphs on measure spaces) are special $P$-operators. We prove the next surprising result.

\begin{theorem} $P$-operator convergence (given by Definition $\ref{limconv}$) restricted to the set of graphons is the same as graphon convergence. Furthermore, $P$-operator convergence restricted to the set of graphings is equivalent to the local-global convergence of graphings.
\end{theorem}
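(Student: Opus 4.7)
The plan is to handle the graphon and graphing statements separately, and in each case to prove both directions. A uniform observation will be useful throughout: the moments of a profile measure $\mu_{v_1,\dots,v_k}$ are exactly the integrals $\int \prod_\alpha v_{i_\alpha}^{a_\alpha}\prod_\beta (Av_{j_\beta})^{b_\beta}\,d\mu$, so Hausdorff convergence of profiles amounts, modulo tightness, to convergence of such ``test integrals'' for a best-matched pairing of admissible test vectors. Since graphons and degree-normalised graphing operators both map the $L^\infty$ unit ball into itself, iterates $v, Av, A^2v,\dots,A^{k-1}v$ are legitimate test vectors; this is crucial for accessing higher powers of $A$ from a fixed-$k$ profile.

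For graphons, I would invoke the classical fact that $\delta_\square$-convergence is equivalent to convergence of every homomorphism density $t(F,W_n)\to t(F,W)$. The forward direction uses the Counting Lemma: after a measure-preserving rearrangement that almost equalises $W_n$ and $W$ in cut norm, each measure $\mu_{v_1,\dots,v_k}^{W_n}$ is $o(1)$-close in L\'evy--Prokhorov metric to $\mu_{v_1,\dots,v_k}^W$, since all finite-order test integrals of the above form are controlled by $\|W_n-W\|_\square$. The reverse direction extracts all $t(F,W)$ from profiles: tree and star densities fall out by feeding indicator test vectors $v_i=\mathbf{1}_{S_i}$ into the $k$-profile, while cycle and more complex densities are obtained through iterated test vectors $(v, Wv, W^2v,\dots)$ combined with indicators for localisation; an application of the Inverse Counting Lemma to the reconstructed densities then yields cut-distance convergence.

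For graphings, I would use the characterisation of local-global convergence via convergence of coloured neighbourhood distributions over every palette $[q]$ and radius $r$, measured in Hausdorff distance over colorings. The bridge is natural: a $[q]$-coloring of the vertex set yields $q$ indicator test vectors $\mathbf{1}_{C_1},\dots,\mathbf{1}_{C_q}$, while every $[-1,1]$-valued test vector admits an $L^1$-approximation by a simple (finitely-valued) function, i.e., a fuzzy coloring. Iterating the adjacency-type operator, the tuple $(v, Av,\dots,A^{k-1}v)$ at a random root encodes the restriction of $v$ to the full ball of radius $k-1$, so the $k$-profile and the coloured $r$-neighbourhood distribution (for $r\le k-1$) package equivalent information once colorings are translated into indicator tests.

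The main technical obstacle will be the graphon reverse direction: profile moments capture star-shaped integrals naturally, and recovering cycle or other non-tree densities requires both operator iteration and a reconstruction argument in the spirit of the Inverse Counting Lemma. A subsidiary obstacle is the quantitative matching of the two Hausdorff metrics in the graphing case, which demands uniform approximation of indicators by bounded measurable functions and vice versa with error estimates independent of the graphing. In both cases one should also verify invariance of profile convergence under measure-preserving rearrangements of $\Omega$, which is essentially immediate from the definition since profile measures depend only on joint distributions.
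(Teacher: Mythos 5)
Your plan reuses the right raw ingredients — cut-norm control of moments for the easy direction, and partitions/indicators as the bridge between test vectors and colorings — but both reverse directions have genuine gaps, and in both cases the paper routes around the obstacle by a different mechanism.

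For graphons, the paper does not go through homomorphism densities at all. The direction $\delta_\square\to d_M$ is a direct computation (Lemma~\ref{limspdm2} gives $d_M(A,B)\le 12\,\delta_\square(A,B)^{1/2}$, with no appeal to the Counting Lemma), and the direction $d_M\to\delta_\square$ is obtained by a compactness-and-contradiction argument combined with Proposition~\ref{quprop} (action convergence implies quotient convergence) and the known fact that quotient equivalence of graphons forces $\delta_\square=0$. Your alternative — extracting all subgraph densities from profiles and invoking the Inverse Counting Lemma — is the place where the argument breaks. Tree and star densities do fall out from indicators, and iterates $W^mv$ are legitimate test vectors since $W$ preserves $[-1,1]$, but this only produces path-type observables of the form $\int v\,(W^m u)\,d\mu$. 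Cycle densities are traces $\operatorname{tr}(W^m)=\sum_i\lambda_i^m$, which are \emph{not} of this form; $(1_\Omega,1_\Omega)_{W^m}=t(P_m,W)$, not $t(C_m,W)$. Recovering $\operatorname{tr}(W^m)$ from the profile requires a full spectral reconstruction of $W$ from the Hausdorff limit of profile sets, which is an argument of a completely different flavour and not ``iterated test vectors combined with indicators for localisation''. The quotient route avoids this entirely, because $(1_{P_i},1_{P_j})_W$ is a directly observable moment.

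For graphings, the claim that $(v,Av,\dots,A^{k-1}v)$ at a random root ``encodes the restriction of $v$ to the full ball of radius $k-1$'' is false: $(A^m v)(x)$ is an aggregate path-count and does not determine the isomorphism type of the colored $m$-ball at $x$ (two colored trees with the same level-by-level colour multiplicities but different branching give the same iterated observables). This is exactly the obstacle the paper circumvents by citing Theorem~\ref{starconv} (from \cite{star}): local-global convergence is equivalent to convergence of \emph{radius-one} colored neighbourhood statistics. Given that reduction, the paper sets up an explicit bijection $\hat\alpha$ between $\mathcal{S}_k'(G)$ (profiles coming from function partitions) and $Z_{k,1}(G)$, and then uses Theorem~\ref{partthm} to pass between partition-restricted profiles $\mathcal{S}_k'$ and general profiles $\mathcal{S}_k$ for uniformly $(p,q)$-bounded sequences. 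Your approximation of $[-1,1]$-valued test vectors by fuzzy colorings is precisely the content of Theorem~\ref{partthm}, so that part of your outline is on the right track; but without the radius-one reduction you would need to somehow recover $r$-ball statistics for all $r$ from profiles, and iteration alone does not do it.
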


The proof of the above theorem relies on a recent result of the second author which reformulates local-global convergence in terms of colored star metric \cite{star}.  
Our main theorem (in an informal language) is the following.

\begin{theorem}{\bf (Compactness and limit object)} Every sequence of $P$-operators with uniformly bounded $\|.\|_{\infty\to 1}$ norms has a Cauchy convergent sub-sequence with respect to $d_M$. Furthermore, if $p,q\in [1,\infty)$, then every Cauchy convergent sequence of $\|.\|_{p\to q}$ uniformly bounded $P$-operators has a limit, which is also a $P$-operator, and the same bound applies for its norm.
\end{theorem}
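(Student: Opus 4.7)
The plan is to prove the two assertions by essentially independent methods. For compactness, fix $k$ and observe that the hypothesis $\|A_n\|_{\infty\to 1}\le C$ forces every measure in $\mathcal{S}_k(A_n)$ to be supported in $[-1,1]^k\times\mathbb{R}^k$, with each of the last $k$ marginals having $L^1$-mean at most $C$ (since for $v$ in the unit ball of $L^\infty$, $\|vA\|_1\le C$). Markov's inequality then gives uniform tightness of this family of measures, so by Prokhorov's theorem it sits inside a fixed compact subset $\mathcal{K}_k\subset\mathcal{P}(\mathbb{R}^{2k})$ in the Lévy--Prokhorov metric. Since the closed subsets of a compact metric space form a compact metric space under Hausdorff distance, $\bigl(\overline{\mathcal{S}_k(A_n)}\bigr)_n$ has a $d_H$-convergent subsequence for each $k$; a standard diagonal argument over $k$ produces a subsequence that is Cauchy in $d_M$.

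For the existence of the limit, let $(A_n)$ be $d_M$-Cauchy with $\|A_n\|_{p\to q}\le C$ and $A_n\colon L^\infty(\Omega_n)\to L^q(\Omega_n)$. Introduce a countable family of formal test labels $\mathcal{T}$ closed under rational linear combinations, and for each $n$ choose an assignment $\tau\mapsto v_\tau^{(n)}\in L^\infty(\Omega_n)$ which respects the algebraic relations, stays in the unit ball, and whose image is dense in the $L^\infty$-unit ball. After a further diagonal extraction (permitted by the Cauchy hypothesis together with the compactness already established), the joint distribution of $\bigl((v_\tau^{(n)},v_\tau^{(n)}A_n)\bigr)_{\tau\in F}$ converges weakly for every finite $F\subset\mathcal{T}$ to some measure $\nu_F$ on $\mathbb{R}^{2F}$. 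The family $\{\nu_F\}$ is consistent, hence by Kolmogorov's extension theorem it arises from a probability measure on a product space $\Omega$, with coordinate random variables $(X_\tau,Y_\tau)_{\tau\in\mathcal{T}}$. Linear identities such as $v_{a\tau+b\sigma}^{(n)}A_n=av_\tau^{(n)}A_n+bv_\sigma^{(n)}A_n$, being reflected in three-fold joint distributions, force $Y_{a\tau+b\sigma}=aY_\tau+bY_\sigma$ almost surely in the limit; by $L^\infty$-density the assignment then extends to a linear map $A\colon L^\infty(\Omega)\to L^q(\Omega)$, the candidate limit $P$-operator.

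To finish, the norm bound and the profile identity must be verified. The bound $\|A\|_{p\to q}\le C$ follows by lower semicontinuity: uniform integrability of $|v_\tau^{(n)}A_n|^q$, obtained from the hypothesis $\|v_\tau^{(n)}A_n\|_q\le C\|v_\tau^{(n)}\|_p$ combined with the uniform $L^\infty$ bound on the test vectors, lets the moment inequality pass to the weak limit and yields $\|Y_\tau\|_q\le C\|X_\tau\|_p$; density then propagates the bound to all of $L^\infty$. The profile identity is essentially built into the construction: every measure in the $k$-profile of $A$ is by design a weak limit of corresponding measures in $\mathcal{S}_k(A_n)$.

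The principal obstacle is the limit-object step. Tightness of individual joint measures is cheap; the real difficulty lies in transferring the algebraic structure (linearity of the operator) and the analytic structure (the $\|.\|_{p\to q}$ bound) through weak limits. Linearity must be extracted from three-way joint distributions rather than the marginals, and the norm bound requires a genuine uniform-integrability argument rather than mere weak convergence, because $q$-th moments are not preserved by weak convergence in general. A further subtlety is choosing the test family $\mathcal{T}$ so that the resulting densely defined linear map extends to all of $L^\infty(\Omega)$ and so that every measure in the Hausdorff limit of the $\mathcal{S}_k(A_n)$ is genuinely realized by some tuple of $L^\infty$-vectors on $\Omega$.
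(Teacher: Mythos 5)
Your sequential-compactness argument is essentially identical to the paper's (Lemma 2.7, Lemma 2.8, Lemma 2.9): Markov's inequality gives tightness, compactness of $(\mathcal{P}_c(\mathbb{R}^{2k}),d_{\rm LP})$ and of closed subsets under $d_H$, then diagonalize over $k$. That part is fine.

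The limit-object construction, however, has a real gap in exactly the place you yourself flag as the principal obstacle. Two of your requirements on the test family $\mathcal{T}$ cannot be met or are too weak. First, you ask for a countable family whose image is dense in the $L^\infty$-unit ball; for atomless $\Omega_n$ the space $L^\infty(\Omega_n)$ is nonseparable, so this is impossible. Second, and more importantly, closure under rational linear combinations is far from enough: to get a linear operator on all of $L^p(\Omega)$ (where $\Omega$ is your Kolmogorov product space) you need the linear span of the limiting coordinate functions $X_\tau$ to be $L^p$-dense, and you also need the functions $Y_\tau$ to lie in the $L^q$-closure of that span so that the limiting $\sigma$-algebra is the full one. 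A linearly closed countable family gives neither. This is precisely why the paper replaces your $\mathcal{T}$ with a free semigroup $F(G,L)$ with two extra kinds of operations: word concatenation (so the test family is multiplicatively closed, hence its coordinate span is an algebra, and $L^p$-density follows from the measure-algebra lemma), and a family of bounded continuous truncation operators $h_{y,z}$ composed with the image under $A_n$ (so that $\pi_{(f,1)}$ can be approximated in $L^q$ by linear combinations of the $\pi_{(g,0)}$). Without this enrichment, the limiting object is a densely defined map on a proper closed subspace of $L^p(\Omega)$, and the profile identity $\mathcal{S}^*_k(A)=X_k$ is not forced; in particular, the direction $\mathcal{S}_k(A)\subseteq X_k$, which you describe as "built into the construction," is actually the nontrivial direction. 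It requires approximating an arbitrary $k$-tuple in $L^\infty_{[-1,1]}(\Omega)$ by tuples from the test span and then cutting back into $[-1,1]$ with a Lipschitz truncation $\tilde h$ and quantifying everything with the L\'evy--Prokhorov bounds (Lemma A.2/Lemma 2.10). None of that is automatic.

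A smaller issue: your justification of $\|Y_\tau\|_q\le C\|X_\tau\|_p$ via uniform integrability of $|v_\tau^{(n)}A_n|^q$ does not work as stated; an $L^q$-bound does not give uniform integrability of the $q$-th powers. The correct (and simpler) argument is a one-sided one: $|\cdot|^q$ is a nonnegative continuous function, so weak convergence gives $\|Y_\tau\|_q\le\liminf_n\|v_\tau^{(n)}A_n\|_q$, while the uniform boundedness of $v_\tau^{(n)}$ gives $\|X_\tau\|_p=\lim_n\|v_\tau^{(n)}\|_p$; combine. This is how the paper proves the third item of Lemma 4.3. The conclusion you want is right, but the mechanism you invoke is not.
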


We show that, under certain boundedness conditions, a number of important operator properties are closed with respect to $P$-operator convergence. This includes self-adjointness, positivity and the positivity-preserving property. A $P$-operator $A:L^\infty(\Omega)\to L^1(\Omega)$ is called positivity-preserving if $vA$ is a non-negative function on $\Omega$ whenever $v\in L^\infty(\Omega)$ is non-negative. The graph-like objects in the universe of $P$-operators are special $P$-operators called graphops.

\begin{definition} A {\bf graphop} is a positivity-preserving, self adjoint $P$-operator. 
\end{definition} 

A particularly nice property of graphops is that they can be represented by symmetric finite measures $\nu$ on $\Omega^2$ with absolutely continuous marginals (see Theorem \ref{measrepthm}.)

\noindent{\it Intuitively, the measure $\nu$ plays the role of the "edge set" of the graphop $A$. When scaled to a probability measure, $\nu$ can be used to sample a random element in $\Omega\times\Omega$, which is the analogue of a random directed edge in a finite graph. By disintegrating $\nu$, we obtain measures $\nu_x$ for every $x\in\Omega$ describing "neighborhoods" in $A$.} 

Adjacency matrices of graphs (or positive weighted graphs), graphons, $L^p$-graphons and graphings are all examples for graphops. A concrete example for a graphop (called spherical graphop), which is none of the previous classes, is explained on Figure \ref{sphericgraph}.

\begin{remarka} Graphops have {\it "edge densities"} and {\it "degrees"}. If $A:L^\infty(\Omega)\to L^1(\Omega)$ is a graphop, then $1_\Omega A$ is a non-negative function. The expected value of $1_\Omega A$ is the edge density of $A$. The value of $1_\Omega A$ at a point $x\in\Omega$ is the "degree" of $x$. The distribution of the random variable $1_\Omega A$ is the "degree distribution" of $A$.
\end{remarka}

\begin{figure}
\tikzstyle{every node}=[circle, draw, fill=black!50,
                        inner sep=0pt, minimum width=4pt]

\begin{tikzpicture}[thick,scale=0.4]%

\begin{scope}[shift={(-8,0)}]
    \draw  \foreach \x in {20,140,...,380} {
        (\x:1) node{} -- (\x+120:1)};
     \draw {(140:1) node{} -- (140:0) node{}};
\end{scope}

      \tikzstyle{every node}=[inner sep=0,outer sep=0]
      
        \node at(-5,-0.2){$\Rightarrow$};
        \node at (-2,-0.2) {$\begin{psmallmatrix}0&1&1&1\\1&0&1&0\\1&1&0&0\\1&0&0&0\end{psmallmatrix}$};
        \node at(1,-0.2){$\Rightarrow$};
                
\node at (5,1) {$\begin{psmallmatrix} 0.3 & 0.2 & 1& -0.8\end{psmallmatrix}$};
\node at (5,0) {$\downarrow$};
\node at (5,-1) {$\begin{psmallmatrix} 0.4 & 1.3 & 0.5 & 0.3\end{psmallmatrix}$};

 \node at(9,-0.2){$\Rightarrow$};
 \node at(15,1){$\frac{1}{4}\Bigl(\delta_{(0.3,0.4)}+\delta_{(0.2,1.3)}$};
 \node at(17,-1){$+\delta_{(1,0.5)}+\delta_{(-0.8,0.3)}\Bigr)$};

\end{tikzpicture}\quad
\caption{graph $\Rightarrow$ operator $\Rightarrow$ action $\Rightarrow$ measure (computing an element in the $1$-profile of a graph)}
\end{figure}
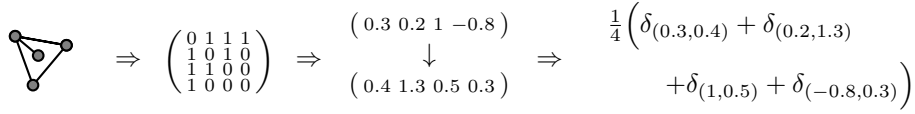

\noindent{\bf Adjacency operator convergence:}~We obtain a general graph convergence notion by considering the convergence of appropriately normalized adjacency matrices of graphs. For a graph $G$, let $A(G)$ denote the adjacency matrix of $G$. It turns out that for a bounded degree sequence of graphs $\{G_i\}_{i=1}^\infty$ the $P$-operator convergence of the sequence $\{A(G_i)\}_{i=1}^\infty$ is equivalent to local-global convergence (an thus it implies Benjamini--Schramm convergence). On the other hand, for a general graph sequence, the $P$-operator convergence of $A(G_i)/|V(G_i)|$ is equivalent to dense graph convergence. For graph sequences of intermediate growth we  normalize each operator $A(G_i)$ by a constant depending on $G_i$ to obtain non-trivial convergence notion and limit object. A natural choice is the  spectral radius given by $\|A(G_i)\|_{2\to 2}$ or, more generally, norms of the form $\|A(G_i)\|_{p\to q}$.

The convergence of normalized adjacency matrices leads to a rich limit theory for graphs of intermediate density. To demonstrate this we give various examples for convergent sequences and limit objects. We calculate the limit object of hypercube graphs. The hypercube graph $Q_n$ is the graph on $\{0,1\}^n$ in which two vectors are connected if they differ at exactly one coordinate. These graphs are very sparse and they are of intermediate density. The graph $Q_n$ is vertex-transitive and can be represented as a Cayley graph of the elementary abelian group $Z_2^n$ with respect to a minimal generating system. Quite surprisingly, the limiting $P$-operator turns out to be also a Cayley graph of the compact group $Z_2^\infty$ with respect to a carefully chosen topological generating system. This illustrates that our limit objects give natural representations of convergent sequences. We calculate similarly natural representations for other convergent graph sequences such as increasing powers of regular graphs and incidence graphs of projective planes.

\noindent{\bf Random walk metric and convergence:}~A possible limitation for the use of adjacency operator convergence is that it may trivialize if the degree distribution is very uneven in a graph sequence. The simplest examples are stars and subdivisions of complete graphs. In the star graph $S_n$ there is one vertex with degree $n$ and $n$ vertices with degree $1$. When normalized in any reasonable way, they converge to the $0$ operator. The property that a graph has very uneven degree distribution is related to the property that a random walk on the graph spends a positive proportion of the time in a negligible fraction of the vertex set. A natural way to counterbalance this problem is to use Markov kernels of random walks  instead of adjacency operators. (Such a modified limit was first used by Benjamini and Curien in case of bounded degree graphs \cite{curien}.) The $P$-operator language shows a nice advantage in this case to the plain matrix language. Even for finite graphs $G$ the corresponding Markov kernel is not just a matrix. The underlying probability space on $V(G)$ is modified from the uniform distribution to the stationary distribution $\nu_G$ of the random walk. Note that $\nu_G(i)$ is proportional to the degree $d(i)$ of $i\in V(G)$. The operator $M(G)$ is given by 
\begin{equation}\label{eq:vmg}(vM(G))(i)=d(i)^{-1}\sum_{(i,j)\in E(G)} v(j)\end{equation} for $i\in{\rm supp}(\nu_G)$. Although $M(G)$ is not symmetric when viewed as a matrix, its action on $L^2(V(G),\nu_G)$ is self-adjoint. Thus $M(G)$ is a positivity-preserving, self-adjoint $P$-operator with the property that $1_{V(G)}M(G)=1_{V(G)}$. The last property is called $1$-regularity.

\smallskip

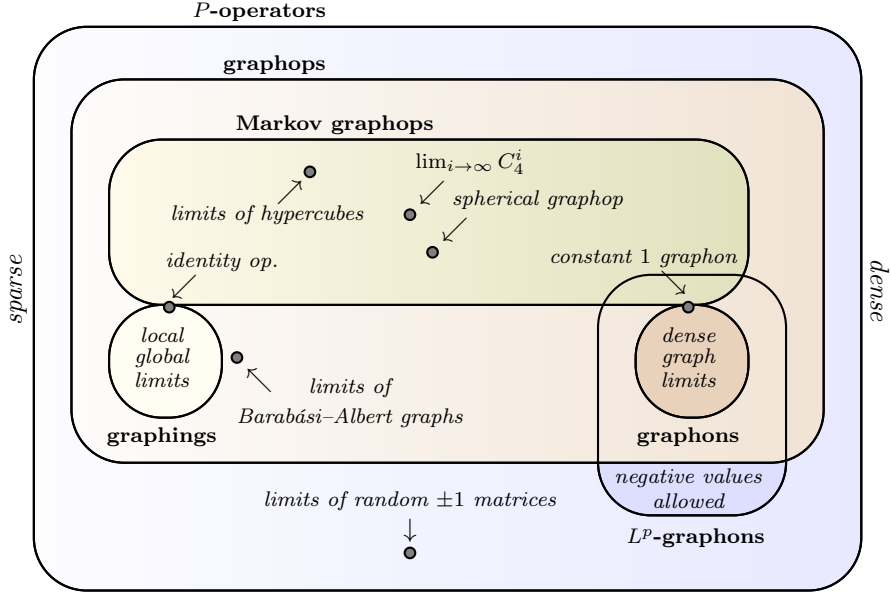
\begin{figure}
 \begin{tikzpicture}[thick]
        \draw [help lines, black!10] (-.5,-.5); 
        
        \colorlet{lightbluel}{blue!1}
        \colorlet{lightbluer}{blue!9}
        \colorlet{lightbluee}{blue!13}
        \colorlet{MyColorOnel}{brown!2}
        \colorlet{MyColorOner}{brown!22}
        \colorlet{MyColorTwo}{brown!8}
        \colorlet{MyColorThree}{brown!18}
        \colorlet{MyColorFourl}{yellow!6}
        \colorlet{MyColorFourr}{brown!35}
        \definecolor{tempcolorl}{RGB}{255,250,235}
        \definecolor{tempcolorr}{RGB}{225,225,190}
        
        \filldraw[left color = lightbluel , right color= lightbluer , rounded corners=20] (0,-0.5) rectangle ++(11,7.5) ;
           \draw [fill=lightbluee,rounded corners=20] (7.5,0.5) rectangle ++(2.5,3.2); 
        \filldraw[left color = MyColorOnel, right color = MyColorOner ,  rounded corners=20] (0.5,1.2) rectangle ++(10,5.1); 
        \filldraw [left color=tempcolorl , right color = tempcolorr , rounded corners=20] (1,3.3) rectangle ++(8.5,2.2); 
     
        \draw [fill=MyColorFourl, rounded corners=20] (1,1.8) rectangle ++(1.5,1.5); 
        \draw [fill=MyColorFourr,rounded corners=20] (8,1.8) rectangle ++(1.5,1.5);

         \draw [rounded corners=20] (0,-0.5) rectangle ++(11,7.5) ;
        \draw [rounded corners=20] (0.5,1.2) rectangle ++(10,5.1); 
        \draw [rounded corners=20] (1,3.3) rectangle ++(8.5,2.2); 
        \draw [rounded corners=20] (1,1.8) rectangle ++(1.5,1.5); 
        \draw [rounded corners=20] (8,1.8) rectangle ++(1.5,1.5); 
        \draw [rounded corners=20] (7.5,0.5) rectangle ++(2.5,3.2); 
        
        
        \node at(3,7.2) {\footnotesize\text{\bf $P$-operators}};
        \node at(3.2,6.5) {\footnotesize\text{\bf graphops}};
        \node at(4,5.7) {\footnotesize\text{\bf Markov graphops}};
        \node at(1.7,1.55) {\footnotesize\text{\bf graphings}};
        \node at(8.7,1.55) {\footnotesize\text{\bf graphons}};
        \node at(8.8,0.2){\footnotesize\text{\bf $L^p$-graphons}};
        
        \node at(2,3.55){$\swarrow$};
        \node at(3.4,4.8){$\nearrow$};
        \node at (8.5,3.6){$\searrow$};
        \node at (5,0.3){$\downarrow$};
        \node at (3,2.3){$\nwarrow$};
        \node at (5.3,4.8){$\swarrow$};
        \node at (5.6,4.3){$\swarrow$};
        \node at(2.5,3.88){\footnotesize\text{\it identity op.}};
        \node at(3.1,4.5){\footnotesize\text{\it limits of hypercubes}};
        \node at(8.1,3.93){\footnotesize\text{\it constant $1$ graphon}};
        \node at(5,0.7){\footnotesize\text{\it limits of random $\pm 1$ matrices}};
        \node at (4.2,2.2){\footnotesize\text{\it limits of}};
        \node at (4.2,1.8){\footnotesize\text{\it Barab\'asi--Albert graphs}};
        \node at (5.8,5.2){\footnotesize\text{\it $\lim_{i\to\infty} C_4^i$}};
        \node at (8.7,1){\footnotesize\text{\it negative values}};
        \node at (8.7,0.7){\footnotesize\text{\it allowed}};
        \node [rotate=90] at (-0.2,3.5){\text{\it sparse}};
        \node [rotate=270] at (11.25,3.5){\text{\it dense}};
        \node at (6.7,4.7) {\footnotesize\text{\it spherical graphop}};
        \node at (8.7,2.9) {\footnotesize\text{\it dense}};
        \node at (8.7,2.6) {\footnotesize\text{\it graph}};
        \node at (8.7,2.3) {\footnotesize\text{\it limits}};
        \node at (1.7,2.9) {\footnotesize\text{\it local}};
        \node at (1.7,2.6) {\footnotesize\text{\it global}};
        \node at (1.7,2.3) {\footnotesize\text{\it limits}};
        
        \tikzstyle{every node}=[circle, draw, fill=black!50,
                        inner sep=0pt, minimum width=4pt] ;    
                        
        \node at(1.8,3.27) {};   
        \node at(8.7,3.27) {};
        \node at(3.67,5.07) {};
        \node at(5,0) {};
        \node at(2.7,2.6) {};
        \node at(5,4.5) {};
        \node at(5.3,4) {};
        
    \end{tikzpicture}
    \caption{Universe of $P$-operators}
    \end{figure}

\noindent{\it The random walk metric $d_{\rm RW}$ on finite non-empty graphs is given by $$d_{\rm RW}(G_1,G_2):=d_M(M(G_1),M(G_2)).$$ The completion of the set of finite, non-empty graphs in $d_{\rm RW}$ is a compact space $\mathcal{G}_{\rm RW}$. Elements of $\mathcal{G}_{\rm RW}$ can be represented by {\bf Markov graphops}, i.e. positivity-preserving, self-adjoint $1$-regular $P$-operators.}

\smallskip

Note that if $A$ is a Markov graphop, then $\|A\|_{2\to 2}=1$. We will see that Markov graphops can also be represented by symmetric self-couplings of probability spaces $(\Omega,\mathcal{A},\mu)$. A symmetric self-coupling is a probability measure $\nu$ on $(\Omega\times\Omega,\mathcal{A}\otimes\mathcal{A})$ such that $\nu$ is symmetric with respect to interchanging the coordinates and both marginals of $\nu$ are equal to $\mu$.
A very pleasant property of the set of all Markov graphops is that it is compact in the metric $d_M$ (see Theorem \ref{markcomp}), and thus we do not need any extra conditions to guarantee convergent subsequences.

We will show in the examples section (Section \ref{chapex}) that stars and subdivisions of complete graphs converge to natural and non-trivial limit objects according to random walk convergence. Note that random walk convergence coincides with normalized adjacency operator convergence for regular graphs (graphs in which every degree is the same).

As we mentioned before, random walk convergence is very convenient. Every graph sequence $\{G_i\}_{i=1}^\infty$ with non-empty graphs has a convergent subsequence in $d_{\rm RW}$, and  the limit object is usually an interesting structured object independently of the sparsity of the sequence. The most trivial the limit object one can get is the quasi-random Markov graphop which can be represented by the constant $1$ graphon $W(x,y):=1$. This occurs for example if the second largest eigenvalue in absolute value of $M(G_i)$ is $o(1)$.

\noindent{\bf Extended random walk convergence:} Finally we describe a general convergence notion that combines the advantages of adjacency operator convergence and random walk convergence. A feature of random walk convergence is that some information may be lost in the limit regarding degree distributions. It turns out that there is a rather natural way to solve this problem, using a mild extension of random walk convergence based on a simultaneous version of action convergence. The principle of action convergence allows us to introduce the convergence of pairs $(A,f)$ where  $A:L^\infty(\Omega)\to L^1(\Omega)$ is a $P$-operator and $f$ is a measurable function on $\Omega$. Roughly speaking, this goes by considering $f$ as a reference function that is automatically included as the last function into every function system used in the definition of the $k$-profile of $A$. More precisely, we define $S_k(A,f)$ as the set of all possible joint distributions of the random variables $\{v_i\}_{i=1}^k$, $\{v_iA\}_{i=1}^k$  and $f$ on $\Omega$, where the values of $v_i$ are in $[-1,1]$. We have that $S_k(A,f)$ is a set of probability measures on $\mathbb{R}^{2k+1}$. 

We can use the extra function to store information on the degrees of vertices in $G$. For a graph $G$ let $d^*_G$ denote a function on $V(G)$ that is an appropriately normalized version of the degree function $d_G$. We can represent $G$ by the pair $(M(G),d^*_G)$ (recall equation (\ref{eq:vmg})). In the limit we obtain a similar pair $(A,f)$ where $A$ is a Markov graphop. The non-negative function $f^{-1}$ (which may also take the value $\infty$) can be used to "re-scale" the probability measure on $\Omega$ to a possibly infinite measure. 

Pairs of the form $(A,f)$ can also be used to represent generalized graphons of the form $W:\mathbb{R}^+\times\mathbb{R}^+\to [0,1]$, where $W$ is symmetric and $\|W\|_1\leq\infty$. This construction will be described in Section \ref{chapgenlim}. Note that these generalized graphons arise in the recently emerging theory of graphexes \cite{BCCH, janson}. 

\begin{figure}
\begin{tikzpicture}
\node at (5.7,2){\text{$\Omega=\{(x,y,z):x^2+y^2+z^2=1\}~,~\mu=$ {\it uniform measure}}};

\node at  (3.7,1.2){\text{$(a,b)\in E \Leftrightarrow  a \bot b$}}; 
\node at (7.5,1.2){\text{$(\Omega,E)$ {\it is a Borel graph}}};
\node at (5.9,0.4){\text{$\nu_a:=$ {\it uniform measure on} $\{b: b\in\Omega , a\bot b\}$}};
\node at  (4.5,-0.4){\text{$S:L^2(\Omega,\mu)\to L^2(\Omega,\mu)~~$}};
\node at  (4.5,-1.2){\text{$(fS)(a):=\displaystyle\int_{b \bot a} f(b)~d\nu_a$}};
\node at (0,2.3){\text{$a$}};
\node at (-1.1,-0.18){\text{$b$}};

  \shade[ball color = gray!40, opacity = 0.4] (0,0) circle (2cm);
  \draw (0,0) circle (2cm);
  \draw (-2,0) arc (180:360:2 and 0.6);
  \draw[dashed] (2,0) arc (0:180:2 and 0.6);
  \fill[fill=black] (0,0) circle (1pt);
  \draw[dashed] (0,0 ) -- (0,2);
  \tikzstyle{every node}=[circle, draw, fill=black!50,
                        inner sep=0pt, minimum width=4pt] ; 
  \draw{(0,2) node{} -- (1.8,-0.28) node{}};
   \draw{(0,2) node{} -- (-1,-0.5) node{}};
   \draw[dashed] {(0,2) node{} -- (-1.5,0.38) node {}};
   
\end{tikzpicture}

\caption{{\it The spherical graphop.} It is neither a graphon nor a graphing: it is somewhere half way in between.}\label{sphericgraph}

\end{figure}
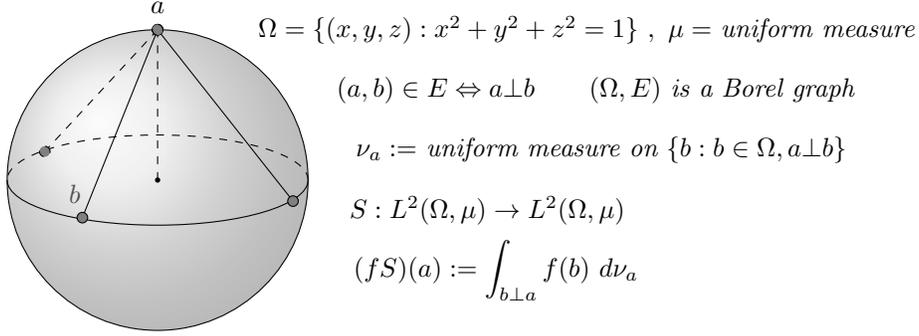

\noindent{\bf Applications to random matrix theory:}~As we mentioned earlier, the notion of $P$-operator convergence was partially motivated by efforts to find a fine enough convergence notion such that random matrices converge to a structured, non-trivial object. The study of this limiting object can help in describing approximate properties of random matrices such as entry distributions of eigenvectors and almost eigenvectors.  In dense graph limit theory random matrices with i.i.d $\pm 1$ entries converge, but the limit object is the constant $0$ function. (In a refinement  of dense limit theory \cite{kunszenti} the limit object is the constant function on $[0,1]^2$ whose value is the uniform probability measure on $\{1,-1\}$.)

Our main observation about random matrices is that $k$-profiles of appropriately normalized random matrices are non-trivial rich objects and their study brings a new point of view on random matrix theory. Let $\mathbb{G}_n$ denote a random matrix whose entries are i.i.d. zero-mean $\pm 1/\sqrt{n}$-valued random variables. The normalizing factor $\sqrt{n}$ is needed to obtain bounded spectral radius. With probability close to $1$, we have that $\|\mathbb{G}_n\|_{2\to 2}$ is close to $2$, see e.g.\ \cite{geman}. In this paper we do ground work on the limiting properties of $\mathbb{G}_n$ according to action convergence. We prove a {\it concentration of measure} type statement for $\mathbb{G}_n$ with respect to the metric $d_M$. This means that for large $n$ the matrix-valued random variable $\mathbb{G}_n$ is well concentrated in the metric space of $P$-operators together with distance $d_M$. This concentration result together with our compactness results imply that for certain good sequences $\{n_i\}_{i=1}^\infty$ of natural numbers, $\{\mathbb{G}_{n_i}\}_{i=1}^\infty$ is convergent with probability one and the limit object is represented by some $P$-operator $L$ acting on $L^2([0,1])$ with bounded $\|.\|_{2\to 2}$ norm. In this paper we leave the question open whether the sequence of all natural numbers is a good sequence. Note that a similar open problem is know for random regular graphs using local-global convergence \cite{large} and it is known that a positive answer would imply the convergence of a great number of interesting graph parameters. For our application it will be enough that any sequence of natural numbers contains a good sub-sequence. Our general results in this paper prepare a follow-up paper which focuses on the limiting properties of random matrices with a special emphasis on eigenvectors and almost eigenvectors.

\section{Limits of matrices and operators} \label{chaplim}

For $k$ vectors $\{v_i\in\mathbb{R}^n\}_{i=1}^k$ let us define their joint empirical entry distribution,  denoted by $\mathcal{D}(v_1,v_2,\dots,v_k)$, as the probability measure on $\mathbb{R}^k$ given by
\begin{equation}\label{empeq}
\mathcal{D}(v_1,v_2\dots,v_k):=n^{-1}\sum_{j=1}^n \delta_{(v_{1,j},v_{2,j},\dots,v_{k,j})},
\end{equation}
where $v_{i,j}$ denotes the $j$-th component of $v_i$ and $\delta_x$ denotes the Dirac measure at $x\in\mathbb{R}^k$. A natural view on empirical entry distributions is the following. Consider $[n]:=\{1,2,\dots,n\}$ as a probability space with the uniform distribution $\mu_{[n]}$ and vectors in $\mathbb{R}^n$ as functions of the form $v:[n]\to\mathbb{R}$. From this view point vectors are random variables and matrices in $\mathbb{R}^{n\times n}$ are operators acting on the space of random variables on the probability space $([n],\mu_{[n]})$. The joint empirical entry distribution $\mathcal{D}(v_1,v_2,\dots,v_k)$ is simply the joint distribution of the vectors $v_1,v_2,\dots,v_n$ viewed as random variables. 

Let $(\Omega,\mathcal{A},\mu)$ (or shortly $\Omega$) be a probability space and assume that $v_1,v_2,\dots,v_k$ are $\mathbb{R}$-valued measurable functions on $\Omega$. We denote by $\mathcal{D}(v_1,v_2,\dots,v_k)$ the joint distribution of $v_1,v_2,\dots,v_k$. In other words it is the push-forward of the measure $\mu$ under the map $x\mapsto (v_1(x),v_2(x),\dots,v_k(x))$, which is a Borel measure on $\mathbb{R}^k$. 

\begin{definition}\label{def:pop} A {\bf $P$-operator} is a linear operator of the form $A:L^\infty(\Omega)\to L^1(\Omega)$ such that $$\|A\|_{\infty\to 1}:=\sup_{v\in L^\infty(\Omega)}\|vA\|_1/\|v\|_\infty$$ is finite. We denote by $\mathcal{B}(\Omega)$ the set of all $P$-operators on $\Omega$. 
\end{definition}

\begin{remarka} If $\Omega=[n]$ and $\mu=\mu_{[n]}$, then $L^1(\Omega)=L^\infty(\Omega)=\mathbb{R}^n$. In this case $\mathcal{B}(\Omega)$ is the set of all $n\times n$ matrices. Thus every matrix $A\in\mathbb{R}^{n\times n}$ is a $P$-operator. 
\end{remarka}

For a set $S\subseteq\mathbb{R}$ we denote by $L^\infty_S(\Omega)$ the set of bounded measurable functions on $\Omega$ whose values are in $S$.

\begin{definition} {\bf ($k$-profile of $P$-operators)} For a $P$-operator $A\in\mathcal{B}(\Omega)$ we define the $k$-profile of $A$, denoted by $\mathcal{S}_k(A)$, as the set of all possible probability measures of the form
\begin{equation}\label{jointeq}
\mathcal{D}(v_1,v_2,\dots,v_k,v_1A,v_2A,\dots,v_kA),
\end{equation}
where $v_1,v_2,\dots,v_k$ run through all possible $k$-tuples of functions in $L^\infty_{[-1,1]}(\Omega)$.
\end{definition}    

For joint distributions of the form (\ref{jointeq}) we will often use the shothand notation
$$\mathcal{D}_A(v_1,v_2,\dots,v_k):=\mathcal{D}(v_1,v_2,\dots,v_k,v_1A,v_2A,\dots,v_kA).$$
Let $\mathcal{P}(\mathbb{R}^k)$ denote the set of Borel probability measures on $\mathbb{R}^k$ for $k\in\mathbb{N}$.

\begin{definition} {\bf (L\'evy--Prokhorov metric)} The L\'evy--Prokhorov metric $d_{\rm LP}$ on $\mathcal{P}(\mathbb{R}^k)$ is defined by
 \[d_{\rm LP}(\eta_1, \eta_2)=\inf\{\varepsilon>0: \eta_1(U)\leq \eta_2(U^{\varepsilon})+\varepsilon \text{ and } \eta_2(U)\leq \eta_1(U^{\varepsilon})+\varepsilon \text{ for all } U\in\mathcal{B}_k\},\]
where $\mathcal{B}_k$ is the Borel $\sigma$-algebra on $\mathbb{R}^k$ and $U^{\varepsilon}$ is the set of points that have distance smaller than $\varepsilon$ from $U$.
\end{definition}

\begin{definition}{\bf (Hausdorff metric)} We measure the distance of subsets $X,Y$ in $\mathcal{P}(\mathbb{R}^k)$ using the Hausdorff metric $d_H$. $$d_H(X,Y):=\max\Big\{\sup_{x\in X}\inf_{y\in Y} d_{\rm LP}(x,y)~,~\sup_{y\in Y}\inf_{x\in X} d_{\rm LP}(x,y)\Big\}.$$ Note that $d_H(X,Y)=0$ if and only if ${\rm cl}(X)={\rm cl}(Y)$, where ${\rm cl}$ is the closure in $d_{\rm LP}$. It follows that $d_H$ is a pseudometric for all subsets in $\mathcal{P}(\mathbb{R}^k)$ and it is a metric for closed sets.
\end{definition}

\begin{definition}\label{limconv} {\bf (Action convergence of $P$-operators)} We say that a sequence of $P$-operators $\{A_i\in\mathcal{B}(\Omega_i)\}_{i=1}^\infty$ is action convergent if for every $k\in\mathbb{N}$ the sequence $\{\mathcal{S}_k(A_i)\}_{i=1}^\infty$ is a Cauchy sequence in $d_H$. 
\end{definition}

\begin{remarka} The completeness of $(\mathcal{P}(\mathbb{R}^k),d_H)$ implies that $\{A_i\}_{i=1}^\infty$ in the above definition is convergent if and only if for every $k\in\mathbb{N}$ there is a closed set $X_k$ such that $\lim_{i\to\infty} d_H(\mathcal{S}_k(A_i),X_k)=0$.
\end{remarka}

\begin{definition} {\bf (Metrization of action convergence)} For two $P$-operators $A,B$ let 
\begin{equation}\label{dmetric}
d_M(A,B):=\sum_{k=1}^\infty 2^{-k} d_H(\mathcal{S}_k(A),\mathcal{S}_k(B)).
\end{equation}
\end{definition}

For a probability measure $\mu$ on $\mathbb{R}^k$ let $\tau(\mu)\in [0,\infty]$ denote the quantity \begin{equation}\label{eq:tau}\max_{1\leq i\leq k} \int_{(x_1,x_2,\dots,x_k)\in\mathbb{R}^k}|x_i|~d\mu.\end{equation} For $c\in\mathbb{R}^+$ and $k\in\mathbb{N}$ let 
$$\mathcal{P}_c(\mathbb{R}^k):=\{\mu : \mu\in\mathcal{P}(\mathbb{R}^k) , \tau(\mu)\leq c\}.$$
Let furthermore $\mathcal{Q}_c(\mathbb{R}^k)$ denote the set of closed sets in the metric space $(\mathcal{P}_c(\mathbb{R}^k),d_{\rm LP})$. The following lemma is an easy consequence of classical results. 

\begin{lemma}\label{limitlem1} The metric spaces $(\mathcal{P}_c(\mathbb{R}^k),d_{\rm LP})$ and $(\mathcal{Q}_c(\mathbb{R}^k),d_H)$ are both compact and complete metric spaces.
\end{lemma}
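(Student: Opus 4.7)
The plan is to reduce both assertions to classical facts: Prokhorov's theorem on tightness for the first, and the compactness of hyperspaces over a compact metric space for the second.

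First I would treat $(\mathcal{P}_c(\mathbb{R}^k), d_{\mathrm{LP}})$. The key step is tightness. For any $\mu\in\mathcal{P}_c(\mathbb{R}^k)$, Markov's inequality gives $\mu(\{x:|x_i|>R\})\leq c/R$ for each coordinate $i$, and therefore
\[
\mu(\mathbb{R}^k\setminus[-R,R]^k)\leq kc/R,
\]
uniformly in $\mu\in\mathcal{P}_c(\mathbb{R}^k)$. Since $[-R,R]^k$ is compact, this is precisely uniform tightness of $\mathcal{P}_c(\mathbb{R}^k)$. By Prokhorov's theorem, $\mathcal{P}_c(\mathbb{R}^k)$ is relatively compact in the weak topology, which on $\mathcal{P}(\mathbb{R}^k)$ coincides with the topology induced by $d_{\mathrm{LP}}$.

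Next I would show that $\mathcal{P}_c(\mathbb{R}^k)$ is closed in $\mathcal{P}(\mathbb{R}^k)$. Suppose $\mu_n\to\mu$ weakly and $\tau(\mu_n)\leq c$. For each coordinate $i$ and each $R>0$ the truncated function $f_R(x)=|x_i|\wedge R$ is bounded and continuous, hence
\[
\int f_R\,d\mu=\lim_{n\to\infty}\int f_R\,d\mu_n\leq\limsup_{n\to\infty}\int|x_i|\,d\mu_n\leq c.
\]
Letting $R\to\infty$ and applying monotone convergence gives $\int|x_i|\,d\mu\leq c$ for each $i$, so $\tau(\mu)\leq c$. Combined with the classical fact that $(\mathcal{P}(\mathbb{R}^k),d_{\mathrm{LP}})$ is a complete (Polish) metric space, closedness of $\mathcal{P}_c(\mathbb{R}^k)$ gives completeness, and together with relative compactness this yields compactness.

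For $(\mathcal{Q}_c(\mathbb{R}^k), d_H)$ I would invoke the standard theorem that if $(X,d)$ is a compact metric space then the hyperspace of non-empty closed subsets of $X$, equipped with the Hausdorff metric, is itself a compact complete metric space (sometimes called the Blaschke or Hausdorff hyperspace theorem). Applying this to $X=(\mathcal{P}_c(\mathbb{R}^k),d_{\mathrm{LP}})$, which was just shown to be compact, gives the conclusion immediately.

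The main obstacle is the closedness step: the moment functional $\mu\mapsto\int|x_i|\,d\mu$ is not continuous with respect to weak convergence because the integrand is unbounded, so preservation of the bound $\tau\leq c$ has to go through the truncation argument above. Everything else is a clean application of well-known results.
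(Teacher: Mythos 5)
Your proof is correct and follows essentially the same route as the paper: Markov's inequality gives uniform tightness, hence relative compactness by Prokhorov, and the hyperspace (Hausdorff/Blaschke) theorem handles $(\mathcal{Q}_c(\mathbb{R}^k),d_H)$. You also supply the closedness-of-$\mathcal{P}_c$ step via truncation, which the paper's one-line proof leaves implicit but which is indeed needed to pass from relative compactness to compactness.
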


\begin{proof} Markov's inequality gives uniform tightness in $\mathcal{P}_c(\mathbb{R}^k)$, which implies the compactness of $(\mathcal{P}_c(\mathbb{R}^k),d_{\rm LP})$. It is known that Hausdorff distance for the closed subsets in a compact Polish space is again compact.
\end{proof}

\begin{lemma}\label{limitlem2} Let $A\in\mathcal{B}(\Omega)$ and let $c:=\max(\|A\|_{\infty\to1},1)$. Then for every $k\in\mathbb{N}$ we have that $\mathcal{S}_k(A)\in \mathcal{Q}_c(\mathbb{R}^{2k})$.
\end{lemma}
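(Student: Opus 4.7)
The plan is to split the assertion into an inclusion statement $\mathcal{S}_k(A)\subseteq\mathcal{P}_c(\mathbb{R}^{2k})$ and a closedness statement (closed inside $\mathcal{P}_c$); together these are exactly what it means for $\mathcal{S}_k(A)$ to belong to $\mathcal{Q}_c(\mathbb{R}^{2k})$.

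The inclusion I would handle by a direct marginal-by-marginal computation. Take an arbitrary $\mu=\mathcal{D}(v_1,\dots,v_k,v_1A,\dots,v_kA)\in\mathcal{S}_k(A)$ with $v_i\in L^\infty_{[-1,1]}(\Omega)$. For a coordinate $1\le i\le k$ the corresponding marginal is the distribution of $v_i$, so $\int|x_i|\,d\mu=\mathbb{E}|v_i|\le\|v_i\|_\infty\le 1\le c$. For a coordinate $k+1\le i\le 2k$ the marginal is the distribution of $v_{i-k}A$, so $\int|x_i|\,d\mu=\|v_{i-k}A\|_1\le\|A\|_{\infty\to 1}\,\|v_{i-k}\|_\infty\le c$, directly from the definition of the $\infty\to 1$ norm. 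The maximum over $i$ is at most $c$, so $\tau(\mu)\le c$ and $\mu\in\mathcal{P}_c(\mathbb{R}^{2k})$.

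For the closedness, given a sequence $\mu_n=\mathcal{D}_A(v_1^{(n)},\dots,v_k^{(n)})$ with $\mu_n\to\mu$ in $d_{\rm LP}$, the goal is to exhibit genuine $v_1,\dots,v_k\in L^\infty_{[-1,1]}(\Omega)$ realizing $\mu=\mathcal{D}_A(v_1,\dots,v_k)$. My first move would be to restrict to the countably generated sub-$\sigma$-algebra of $\Omega$ spanned by the countable family $\{v_i^{(n)},\,v_i^{(n)}A\}$, which makes $L^1$ separable, and then apply weak-$*$ sequential compactness of the closed unit ball of $L^\infty$ to extract a subsequence along which $v_i^{(n)}\rightharpoonup v_i$ for each $i$. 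The candidates $v_i$ automatically lie in $L^\infty_{[-1,1]}(\Omega)$.

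The main obstacle is that weak-$*$ convergence alone does not preserve joint distributions (the textbook example $\operatorname{sgn}\sin(2\pi nx)\rightharpoonup 0$ already fails at the level of first marginals), and, because $A$ goes only $L^\infty\to L^1$, its "adjoint" need not land in $L^1$, so weak $L^1$ convergence of $v_i^{(n)}A$ to $v_iA$ is not automatic either. To bypass this I would lift the problem to Young measures on $\Omega\times[-1,1]^k\times\mathbb{R}^k$: the corresponding sequence is tight (using the $\tau$ bound just proved on the $\mathbb{R}^k$ factor), hence admits a narrow limit projecting onto $\mu$, and can then, via an ultraproduct or measure-algebra extension of $\Omega$ in the spirit of the Elek--Szegedy framework, be collapsed back to an honest measurable selection $(v_1,\dots,v_k)$ on $\Omega$ itself. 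Ensuring that this reconstruction remains compatible with the action of the operator $A$ — i.e.\ that the last $k$ coordinates of the narrow limit really are the images of the selected $v_i$'s under $A$ — is the genuinely delicate step that the rest of the proof must settle.
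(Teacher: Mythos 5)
Your marginal-by-marginal computation of the first moments — giving $\tau(\mu)\le c$ for every $\mu\in\mathcal S_k(A)$ and hence $\mathcal S_k(A)\subseteq\mathcal P_c(\mathbb R^{2k})$ — is exactly the paper's argument, and it is correct.

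Where you depart from the paper is in reading the conclusion $\mathcal S_k(A)\in\mathcal Q_c(\mathbb R^{2k})$ literally and trying to also prove that $\mathcal S_k(A)$ is \emph{closed} in $(\mathcal P_c(\mathbb R^{2k}),d_{\rm LP})$. The paper's own proof does not do this: it stops after the moment bound, so it in fact only establishes the inclusion $\mathcal S_k(A)\subseteq\mathcal P_c(\mathbb R^{2k})$ and says nothing about closedness. The reason it can afford to is that closedness of $\mathcal S_k(A)$ is never used. The Hausdorff distance $d_H$ is set up as a pseudometric on \emph{all} subsets, with $d_H(X,{\rm cl}(X))=0$; and by compactness of $(\mathcal P_c(\mathbb R^{2k}),d_{\rm LP})$ (Lemma~\ref{limitlem1}), the inclusion $\mathcal S_k(A)\subseteq\mathcal P_c(\mathbb R^{2k})$ already forces $\mathcal S_k^*(A)={\rm cl}(\mathcal S_k(A))\subseteq\mathcal P_c(\mathbb R^{2k})$, so $\mathcal S_k^*(A)\in\mathcal Q_c(\mathbb R^{2k})$. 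All downstream arguments (Lemma~\ref{lem:sc}, the limit construction behind Theorem~\ref{exlim}) either work directly with the closed sets $\mathcal S_k^*(A)$ or with $d_H$, for which replacing a set by its closure is free; indeed the paper introduces the notation $\mathcal S_k^*$ precisely because it does not expect $\mathcal S_k$ to be closed. As for your attempted closedness proof: you have correctly identified the obstruction (weak-$*$ convergence of the $v_i^{(n)}$ does not control joint distributions, and $A$ maps only $L^\infty\to L^1$, so there is no usable adjoint), and the Young-measure/extension-of-$\Omega$ route is a sensible one, but as you yourself note the step making the reconstructed $v_i$ compatible with the action of $A$ is not carried out. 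That is a genuine gap in your write-up, but fortunately one the paper never asks you to fill: it suffices to state the inclusion and then immediately pass to $\mathcal S_k^*(A)$.
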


\begin{proof} Let $\{v_i\}_{i=1}^k$ be a system of vectors in $L^\infty_{[-1,1]}(\Omega)$. We have that $\|v_i\|_1\leq \|v_i\|_\infty\leq 1$ and $\|v_iA\|_1\leq \|A\|_{\infty\to 1}$  holds for $1\leq i\leq k$. Since the first moments of the absolute values of the coordinates in (\ref{jointeq}) are given by $\{\|v_i\|_1\}_{i=1}^k$ and $\{\|v_iA\|_1\}_{i=1}^k$,  the proof is complete.
\end{proof}

Lemma \ref{limitlem1} and Lemma \ref{limitlem2} have the following corollary.

\begin{lemma} \label{lem:sc} {\bf (Sequential compactness)} Let $\{A_i\}_{i=1}^\infty$ be a sequence of $P$-ope\-rators with uniformly bounded $\|.\|_{\infty\to 1}$ norms. Then it has an action convergent subsequence.
\end{lemma}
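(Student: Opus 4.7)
The plan is to combine the two preceding lemmas with a standard diagonal extraction. Let $c:=\max(\sup_i \|A_i\|_{\infty\to 1},1)$, which is finite by hypothesis. By Lemma \ref{limitlem2}, for every $k\in\mathbb{N}$ and every $i$ we have $\mathcal{S}_k(A_i)\in\mathcal{Q}_c(\mathbb{R}^{2k})$, so all $k$-profiles of the sequence live in one common compact metric space.

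Next I would proceed by a Cantor diagonal argument, one $k$ at a time. By the compactness of $(\mathcal{Q}_c(\mathbb{R}^{2}),d_H)$ from Lemma \ref{limitlem1}, the sequence $\{\mathcal{S}_1(A_i)\}_{i=1}^\infty$ has a $d_H$-convergent (hence Cauchy) subsequence; write the corresponding indices as $I_1\subseteq\mathbb{N}$. Given an infinite $I_{k-1}$, apply compactness of $(\mathcal{Q}_c(\mathbb{R}^{2k}),d_H)$ to the further sequence $\{\mathcal{S}_k(A_i)\}_{i\in I_{k-1}}$ to obtain an infinite $I_k\subseteq I_{k-1}$ along which $\{\mathcal{S}_k(A_i)\}$ converges in $d_H$. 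Let $i_k$ denote the $k$-th element of $I_k$ and form the diagonal subsequence $\{A_{i_k}\}_{k=1}^\infty$.

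For any fixed $j\in\mathbb{N}$, the tail $\{i_k\}_{k\geq j}$ is a subsequence of $I_j$, so $\{\mathcal{S}_j(A_{i_k})\}_{k\geq j}$ is Cauchy in $d_H$, and therefore $\{\mathcal{S}_j(A_{i_k})\}_{k=1}^\infty$ is Cauchy as well. Since this holds for every $j$, Definition \ref{limconv} gives that $\{A_{i_k}\}_{k=1}^\infty$ is action convergent, completing the proof. There is no real obstacle here: the only delicate point is ensuring that the $k$-profiles all lie in a single compact ambient space so that compactness can be invoked uniformly in $i$, which is exactly the role of the uniform bound on $\|A_i\|_{\infty\to 1}$ through Lemma \ref{limitlem2}.
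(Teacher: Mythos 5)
Your proof is correct and follows exactly the route the paper intends: the paper presents this lemma as an immediate corollary of Lemma \ref{limitlem1} (compactness of $(\mathcal{Q}_c(\mathbb{R}^{2k}),d_H)$) and Lemma \ref{limitlem2} (the $k$-profiles land in $\mathcal{Q}_c(\mathbb{R}^{2k})$), with the standard Cantor diagonal extraction left implicit. Your write-up simply makes that diagonalization explicit, so there is nothing to add.
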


For a real number $p\in [1,\infty]$ and measurable function $v:\Omega\to\mathbb{R}$ we have that $$\|v\|_p:=\Bigl(\int |v|^p~d\mu\Bigr)^{1/p}\in \mathbb{R}^+\cup\{0,\infty\}.$$ Note that if $p=\infty$, then $\|v\|_\infty$ denotes the "essential maximum" of $v$.
It is well known that for $p\leq q$ we have $\|v\|_p\leq\|v\|_q$ for any measurable function $v$ on $\Omega$.  
Let $p,q\in [1,\infty]$ be real numbers and let $A:L^\infty(\Omega)\to L^1(\Omega)$ be a linear operator. The operator norm $\|A\|_{p\to q}$ is defined by $$\|A\|_{p\to q}:=\sup_{v\in L^\infty(\Omega)}\|vA\|_q/\|v\|_p.$$ We say that $A$ is $(p,q)$-bounded if $\|A\|_{p\to q}$ is finite. We have that if $p',q'\in [1,\infty]$ satisfy $p'\geq p$ and $q'\leq q$ then $\|A\|_{p'\to q'}\leq \|A\|_{p\to q}$. We denote by $\mathcal{B}_{p,q}(\Omega)$ the set of $(p,q)$-bounded linear operators from $L^\infty(\Omega)$ to $L^1(\Omega)$.  If $p',q'\in [1,\infty]$ satisfy $p'\geq p$ and $q'\leq q$, then $\mathcal{B}_{p,q}(\Omega)\subseteq\mathcal{B}_{p',q'}(\Omega)$. In particular, $\mathcal{B}_{p,q}(\Omega)\subseteq\mathcal{B}_{p,1}(\Omega)$ and $\mathcal{B}(\Omega)=\mathcal{B}_{\infty,1}(\Omega)$ contains $\mathcal{B}_{p,q}$ for every $p,q\in [1,\infty]$. 

\begin{remarka} {\bf ($L^2$ theory)} If a $P$-operator $A$ satisfies $\|A\|_{p\to q}\leq\infty$, then $A$ extends uniquely to an operator of the form $L^p(\Omega)\to L^q(\Omega)$. In this sense, by slightly abusing the notation, we can identify the set $\mathcal{B}_{p,q}(\Omega)$ with the set of operators $L^p(\Omega)\to L^q(\Omega)$ with bounded $\|.\|_{p\to q}$ norm. An especially nice class of $P$-operators is the set $\mathcal{B}_{2,2}(\Omega)$. If $\Omega$ is fixed then these operators are closed with respect to composition and they form a so-called von Neumann algebra. 
\end{remarka}

\begin{remarka} {\bf ($L^p$ graphons as $P$-operators)} Let $p\in (0,\infty]$ and assume that $W:\Omega\times\Omega\to\mathbb{R}$ is a measurable function such that $$\|W\|_p:=\Bigl(\int |W|^p~d\mu^2\Bigr)^{1/p}<\infty.$$ Let $q:=p/(p-1)$. We can associate an operator $A_W:L^q(\Omega)\to L^p(\Omega)$ with $W$,  defined by $(fA_{W})(x):=\int f(y)W(y,x)~d\mu$. It is easy to see that $\|A_W\|_{q\to p}<\infty$ and thus $A_W\in\mathcal{B}_{q,p}$ is a $P$-operator representing the so-called $L^p$-graphon $W$. For a theory of $L^p$-graphon convergence see \cite{BCCZ}. It follows from our theory that for sequences of $L^p$-graphons $\{W_i\}_{i=1}^\infty$ with uniformly bounded $L^p$-norms, action convergence of the representing operators $A_{W_i}$ is equivalent to $L^p$-graphon convergence. 
\end{remarka}

The following theorem is one of the main results in this paper. Its proof can be found in Section \ref{proofexlim}. 

\begin{theorem}\label{exlim} {\bf (Existence of limit object)} Let $p\in [1,\infty)$ and $q\in [1,\infty]$. Let $\{A_i\}_{i=1}^\infty$ be a convergent sequence of $P$-operators with uniformly bounded $\|.\|_{p\to q}$ norms. Then there is a $P$-operator $A$ such that $\lim_{i\to\infty} d_M(A_i,A)=0$, and $\|A\|_{p\to q}\leq\sup_{i\in\mathbb{N}}\|A_i\|_{p\to q}$.
\end{theorem}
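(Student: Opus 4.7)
The plan is to construct the limit operator $A$ by an ultraproduct (Loeb measure) construction applied to $\{(\Omega_i, \mu_i, A_i)\}_{i=1}^\infty$ and then to verify that its $k$-profiles realize the Hausdorff limits of $\mathcal{S}_k(A_i)$. Set $K:=\sup_i\|A_i\|_{p\to q}$. On a probability space we have $\|f\|_p\leq\|f\|_\infty$ for $p\geq 1$ and $\|f\|_1\leq\|f\|_q$ for $q\geq 1$, so $\|A_i\|_{\infty\to 1}\leq K$; taking $c:=\max(K,1)$, Lemma \ref{limitlem2} gives $\mathcal{S}_k(A_i)\in\mathcal{Q}_c(\mathbb{R}^{2k})$ for all $i,k$. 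Combined with action convergence and the completeness in Lemma \ref{limitlem1}, this produces closed limit sets $X_k\in\mathcal{Q}_c(\mathbb{R}^{2k})$ with $d_H(\mathcal{S}_k(A_i),X_k)\to 0$.

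Fix a non-principal ultrafilter $\mathcal{U}$ on $\mathbb{N}$ and let $(\Omega,\mathcal{A},\mu)$ be the Loeb probability space associated with the sequence $(\Omega_i,\mathcal{A}_i,\mu_i)$. Every $v\in L^\infty_{[-1,1]}(\Omega)$ is represented by an internal sequence $(v_i)$ with $v_i\in L^\infty_{[-1,1]}(\Omega_i)$. I define $vA$ as the standard part of the internal function $(v_iA_i)$, interpreted as an element of $L^q(\Omega)$: when $q\in(1,\infty]$, reflexivity (or weak-$\ast$ compactness) combined with $\|v_iA_i\|_q\leq K$ makes this canonical; when $q=1$, an additional uniform-integrability argument, obtained from truncating the $v_i$ on large level sets and using $p<\infty$, secures a bona fide $L^1$ representative. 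Extending by homogeneity yields a map $A:L^\infty(\Omega)\to L^q(\Omega)\subseteq L^1(\Omega)$, i.e.\ a $P$-operator.

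Well-definedness follows because two internal representatives of the same $v$ satisfy $\lim_\mathcal{U}\mu_i(\{v_i\neq\tilde v_i\})=0$ and $|v_i-\tilde v_i|\leq 2$, whence $\lim_\mathcal{U}\|v_i-\tilde v_i\|_p=0$ (using $p<\infty$), so $\|v_iA_i-\tilde v_iA_i\|_q\leq K\|v_i-\tilde v_i\|_p\to 0$ along $\mathcal{U}$. Linearity is inherited from the $A_i$, and the norm bound $\|vA\|_q\leq\lim_\mathcal{U}\|v_iA_i\|_q\leq K\lim_\mathcal{U}\|v_i\|_p=K\|v\|_p$ gives $\|A\|_{p\to q}\leq K$. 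For any tuple $v_1,\dots,v_k\in L^\infty_{[-1,1]}(\Omega)$ with representatives $(v_{j,i})$, the joint law $\mathcal{D}_A(v_1,\dots,v_k)$ is the L\'evy--Prokhorov ultralimit of $\mathcal{D}_{A_i}(v_{1,i},\dots,v_{k,i})\in\mathcal{S}_k(A_i)$ and so lies in the closed set $X_k$; conversely, since $d_H(\mathcal{S}_k(A_i),X_k)\to 0$, every $\mu\in X_k$ is approximated by choices that can be selected along $\mathcal{U}$ and is therefore realized by $A$. Hence $\mathcal{S}_k(A)$ and $X_k$ have the same closure, so $d_H(\mathcal{S}_k(A_i),\mathcal{S}_k(A))\to 0$ for every $k$ and $d_M(A_i,A)\to 0$.

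The main obstacle I anticipate is the endpoint $q=1$: the standard part of an $L^1$-bounded internal sequence on a Loeb space is a priori only a finite signed measure on $\Omega$, which may have a singular part with respect to $\mu$. Producing an honest $L^1(\Omega)$-valued operator requires either establishing uniform integrability of $\{v_iA_i\}$ from the $\|A_i\|_{p\to 1}\leq K$ bound together with $p<\infty$, or enlarging $\Omega$ to absorb the singular parts while preserving the $k$-profiles. The reflexive cases $q\in(1,\infty]$ proceed by standard weak compactness arguments, and the remaining consistency/realization steps are essentially a diagonal extraction on $\mathcal{U}$.
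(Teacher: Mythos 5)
Your ultraproduct/Loeb-measure route is genuinely different from the paper's construction. The paper indexes a countable function system on each $\Omega_i$ by a free semigroup with operators $F(G,L)$ (closing under products, post-composition with $A_i$, and truncations $h_{y,z}$), takes a weak subsequential limit of the pushforward laws $\kappa_i\in\mathcal{P}(\mathbb{R}^{F\times\{0,1\}})$, and then defines $A$ on the dense span of the coordinate functions by $\pi_{(f,0)}A:=\pi_{(f,1)}$, extending by the norm bound obtained via lower semicontinuity under weak convergence. Your strategy is sound, and the realization of $X_k$ as $\mathcal{S}_k^*(A)$ does go through essentially as you sketch, because the $\mu_L$-a.e.\ pointwise standard part of a uniformly $L^1$-bounded internal $\mathbb{R}^{2k}$-valued map pushes Loeb measure forward to the L\'evy--Prokhorov $\mathcal{U}$-limit of the finite-level pushforwards. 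What the paper's approach buys is elementariness (no ultrafilters, and a limit object living on a standard Borel space $\mathbb{R}^{F\times\{0,1\}}$); what yours buys is brevity, at the price of a non-separable ambient Loeb space.

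The one point to flag concretely is your treatment of the endpoint $q=1$. The remedy you propose --- deducing uniform integrability of $\{v_iA_i\}$ from $\|A_i\|_{p\to 1}\le K$ together with $p<\infty$ --- does not work. Take the rank-one $P$-operators $A_i$ on $([i],\mu_{[i]})$ given by $(wA_i):=\bigl(\tfrac1i\sum_k w_k\bigr)f_i$ where $f_i:=i\,e_1$; then $\|f_i\|_1=1$, $\|A_i\|_{p\to 1}=\|1\|_{p'}\,\|f_i\|_1=1$ for every $p\in[1,\infty)$, yet with $v_i\equiv 1_{\Omega_i}$ we have $v_iA_i=f_i$ and $\int_{\{|f_i|>M\}}|f_i|\,d\mu_{[i]}=1$ whenever $i>M$, so uniform integrability fails. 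At the same time, uniform integrability is not actually needed and the ``main obstacle'' you raise dissolves once you commit to the \emph{pointwise} standard part (defined $\mu_L$-a.e.\ since $\mu_i(|v_iA_i|>M)\le K/M$) rather than a weak-$*$ limit of the measures $v_iA_i\,d\mu_i$. Nonstandard Fatou gives $\|vA\|_{L^1(\mu_L)}\le\lim_\mathcal{U}\|v_iA_i\|_1\le K\|v\|_p$, so $vA$ is a genuine element of $L^1(\Omega)$; the singular part you fear never appears because you are not asking for $\int v_iA_i\,d\mu_i\to\int vA\,d\mu_L$. Crucially, mass that escapes to infinity in $(v_iA_i)$ also escapes in the L\'evy--Prokhorov topology, so the laws $\mathcal{D}_{A_i}(v_{1,i},\dots,v_{k,i})$ still converge to $\mathcal{D}_A(v_1,\dots,v_k)$ even when the integrals do not converge. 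In the example above, the a.e.\ standard part of $f_i$ is $0$, the laws $\mathcal{D}(1,f_i)$ converge weakly to $\delta_{(1,0)}$, and the limit operator is the zero operator --- all mutually consistent. One small secondary imprecision: two liftings of the same $v$ need not satisfy $\mu_i(\{v_i\ne\tilde v_i\})\to 0$; the correct statement is that $v_i-\tilde v_i$ is infinitesimal in $\mu_L$-probability, which together with the bound $|v_i-\tilde v_i|\le 2$ and $p<\infty$ still yields $\lim_\mathcal{U}\|v_i-\tilde v_i\|_p=0$, so your well-definedness conclusion stands.
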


For a $P$-operator $A$ and $k\in\mathbb{N}$ let $\mathcal{S}_k^*(A)$ denote the closure of $\mathcal{S}_k(A)$ in the space $(\mathcal{P}(\mathbb{R}^{2k}),d_{\rm LP})$.

\begin{definition} {\bf(Weak equivalence and weak containment)} Let $A$ and $B$ be two $P$-operators. We say that $A$ and $B$ are  weakly equivalent if $d_M(A,B)=0$. We have that $A$ and $B$ are weakly equivalent if and only if $\mathcal{S}_k^*(A)=\mathcal{S}_k^*(B)$ holds for every $k\in\mathbb{N}$. We say that $A$ is weakly contained in $B$ if $\mathcal{S}_k^*(A)\subseteq\mathcal{S}_k^*(B)$ holds for every $k\in\mathbb{N}$. We denote weak containment by $A\prec B$.
\end{definition}

It is easy to see that norms of the form $\|.\|_{p\to q}$ are invariant with respect to weak equivalence (these norms can be read off from the $1$-profiles of $P$-operators.) Let $\mathcal{X}$ denote the set of weak equivalence classes of $P$-operators and let $\mathcal{X}'\subset\mathcal{X}$ denote the set of equivalence classes of $P$-operators defined on atomless probability spaces.  For $c\in\mathbb{R}^+, p\in [1,\infty),q\in (1,\infty]$ let  $\mathcal{X}_{p,q,c}:=\{A: A\in\mathcal{X}, \|A\|_{p\to q}\leq c\}$ and $\mathcal{X}'_{p,q,c}:=\{A: A\in\mathcal{X}', \|A\|_{p\to q}\leq c\}$. 

The next theorem follows from Theorem \ref{exlim} and Lemma \ref{atclosed}. 

\begin{theorem}  \label{thm:compact} {\bf (Compactness)} For every $c\in\mathbb{R}^+,p\in [1,\infty),q\in (1,\infty]$ the spaces $(\mathcal{X}_{p,q,c},d_M)$ and $(\mathcal{X}'_{p,q,c},d_M)$ are compact. 
\end{theorem}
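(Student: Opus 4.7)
The plan is to establish compactness by proving sequential compactness, since $d_M$ is a metric on equivalence classes of $P$-operators (this is immediate from the definition of $\mathcal{X}$). Given that the space is already metric, sequential compactness implies compactness. The strategy reduces essentially to combining three ingredients already available: the inequality between operator norms, the subsequence extraction of Lemma \ref{lem:sc}, and the limit object existence and norm bound from Theorem \ref{exlim}.

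First I would take an arbitrary sequence $\{A_i\}_{i=1}^\infty$ of equivalence classes in $\mathcal{X}_{p,q,c}$, represented by $P$-operators (on possibly varying probability spaces $\Omega_i$) with $\|A_i\|_{p\to q}\leq c$. Using the monotonicity stated earlier in the excerpt, namely $\|A\|_{p'\to q'}\leq \|A\|_{p\to q}$ whenever $p'\geq p$ and $q'\leq q$, we specialize to $p'=\infty$ and $q'=1$ to obtain
\[
\|A_i\|_{\infty\to 1} \leq \|A_i\|_{p\to q} \leq c
\]
for every $i$. Thus the sequence has uniformly bounded $\|.\|_{\infty\to 1}$ norms, and Lemma \ref{lem:sc} yields a subsequence which is action convergent, i.e.\ Cauchy in $d_M$. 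Since $p\in[1,\infty)$ (so Theorem \ref{exlim} applies), this Cauchy subsequence admits a $P$-operator limit $A$ in $d_M$, and moreover
\[
\|A\|_{p\to q} \leq \sup_{i\in\mathbb{N}} \|A_i\|_{p\to q} \leq c.
\]
Hence the equivalence class of $A$ lies in $\mathcal{X}_{p,q,c}$, which proves the sequential compactness of $(\mathcal{X}_{p,q,c}, d_M)$.

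For the refinement to $\mathcal{X}'_{p,q,c}$, I run the same argument: the uniform $\|.\|_{\infty\to 1}$ bound gives a Cauchy subsequence, and Theorem \ref{exlim} produces a limit $A$ with the correct norm bound. The only additional point needed is that being representable by a $P$-operator on an atomless probability space is preserved under action convergence, which is exactly the content of Lemma \ref{atclosed}; invoking it places $A$ in $\mathcal{X}'_{p,q,c}$.

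The main (minor) obstacle is being careful that each ingredient really applies: in particular, that the norm comparison $\|A\|_{\infty\to 1}\le \|A\|_{p\to q}$ holds in this generality (it does, via the monotonicity already recorded in the text), and that the hypothesis $p<\infty$ required by Theorem \ref{exlim} matches the assumption $p\in[1,\infty)$ of the theorem. Everything else is a direct application of previously stated results, so beyond invoking Lemma \ref{atclosed} for the atomless case, there is no further difficulty.
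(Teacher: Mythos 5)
Your proof is correct and follows essentially the same route as the paper, which simply states that the theorem "follows from Theorem \ref{exlim} and Lemma \ref{atclosed}"; you have merely spelled out the implicit steps, namely the norm monotonicity $\|A\|_{\infty\to 1}\leq\|A\|_{p\to q}$, the subsequence extraction via Lemma \ref{lem:sc}, and the norm bound on the limit.
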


\begin{corollary} Let $c\in\mathbb{R}^+$. We have that $\{W:W\in \mathcal{B}_{2\to 2}([0,1]) , \|W\|_2\leq c\}$ is compact in the topology generated by $d_M$.
\end{corollary}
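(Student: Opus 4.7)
The plan is to derive the corollary directly from Theorem \ref{thm:compact} by specializing to $p=q=2$. Both exponents satisfy the hypotheses ($2\in[1,\infty)$ and $2\in(1,\infty]$), so the theorem yields that $\mathcal{X}'_{2,2,c}$ — the space of weak equivalence classes of $P$-operators on atomless probability spaces with $\|\cdot\|_{2\to 2}\leq c$ — is compact with respect to $d_M$. The entire task is then to identify the set displayed in the corollary with $\mathcal{X}'_{2,2,c}$ up to weak equivalence.

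To set this up, I would read the constraint $\|W\|_2\leq c$ as the operator norm $\|W\|_{2\to 2}\leq c$, matching the notation $W\in\mathcal{B}_{2\to 2}([0,1])$. Since $([0,1],\mathrm{Lebesgue})$ is a standard atomless probability space, every such $W$ defines an element of $\mathcal{X}'_{2,2,c}$. Conversely, every standard atomless probability space is measure-isomorphic to $([0,1],\mathrm{Lebesgue})$, and both the $k$-profiles $\mathcal{S}_k(A)$ and the norms $\|A\|_{p\to q}$ are intrinsic invariants of the measure-theoretic data: they are defined through pushforwards of $\mu$ under measurable maps and through integrals of measurable functions. Consequently, any measure isomorphism induces a bijection between the relevant $P$-operator spaces that preserves both $d_M$ and the $(2,2)$-norm. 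Hence every class in $\mathcal{X}'_{2,2,c}$ has a representative in $\{W\in\mathcal{B}_{2\to 2}([0,1]):\|W\|_{2\to 2}\leq c\}$.

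It follows that the natural map sending $W$ to its weak equivalence class is a surjection from the set in the corollary onto $\mathcal{X}'_{2,2,c}$ and an isometry with respect to the pseudometric $d_M$. Compactness of $\mathcal{X}'_{2,2,c}$ thus transfers to compactness of the set in the corollary (read either as a pseudometric space, or equivalently after quotienting by weak equivalence, which is the only sense in which the statement can be nondegenerate since $d_M$ is a pseudometric on actual operators).

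The only substantive input is Theorem \ref{thm:compact}, from which the conclusion is essentially immediate; the measure-isomorphism step is standard Lebesgue--Rokhlin theory, so I anticipate no real obstacle. The one subtlety worth flagging is the interpretation of $\|W\|_2$, which must match the operator bound $\|W\|_{2\to 2}\leq c$ in order to apply Theorem \ref{thm:compact} with $p=q=2$; if one instead read $\|W\|_2$ as the $L^2$ norm of a graphon kernel, the corollary would follow a fortiori since the Cauchy--Schwarz inequality gives $\|A_W\|_{2\to 2}\leq\|W\|_2$ for the associated integral operator.
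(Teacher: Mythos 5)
Your main line of reasoning is correct and is essentially the same as the paper's: the corollary is stated immediately after Theorem \ref{thm:compact} precisely because it is the specialization $p=q=2$, combined with the standard fact that every atomless standard probability space is measure-isomorphic to $([0,1],\lambda)$ and that $d_M$ and the $\|\cdot\|_{p\to q}$ norms are invariant under such isomorphisms. That identification of the displayed set with $\mathcal{X}'_{2,2,c}$ (modulo weak equivalence) is exactly what makes the corollary immediate.

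One caution about your final remark: the claim that the alternative reading of $\|W\|_2$ as the $L^2$ kernel norm would give the corollary \emph{a fortiori} is not sound as stated. Cauchy--Schwarz does embed $\{A_W : \|W\|_2\leq c\}$ into the compact set $\mathcal{X}'_{2,2,c}$, but a subset of a compact space is compact only if it is \emph{closed}, and closedness of $\{A_W:\|W\|_2\leq c\}$ in $d_M$ is exactly the nontrivial issue: a $d_M$-limit of operators with $L^2$ kernels need not itself have an $L^2$ kernel (graphings, for example, are $(2,2)$-bounded $P$-operators represented by singular measures rather than kernels). So the reading $\|W\|_2=\|W\|_{2\to2}$, which makes the set coincide with all of $\mathcal{X}'_{2,2,c}$, is not just the convenient reading but the one that makes the statement true.
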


\begin{lemma}\label{limlem3} Let $k\in\mathbb{N}$ and let $A,B$ be $P$-operators both in $\mathcal{B}(\Omega)$ for some probability space $(\Omega,\mathcal{A},\mu)$. Then $$d_H(\mathcal{S}_k(A),\mathcal{S}_k(B))\leq \|A-B\|_{\infty\to 1}^{1/2}(2k)^{3/4}.$$
\end{lemma}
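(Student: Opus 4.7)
The plan is to bound the one-sided Hausdorff distance by producing, for every $\eta \in \mathcal{S}_k(A)$, a nearby element of $\mathcal{S}_k(B)$ (and then arguing symmetrically). Given $\eta = \mathcal{D}_A(v_1,\dots,v_k)$ with $v_i \in L^\infty_{[-1,1]}(\Omega)$, the natural candidate is $\eta' = \mathcal{D}_B(v_1,\dots,v_k)$; this is automatically in $\mathcal{S}_k(B)$ since the same $v_i$'s are admissible for $B$. The crucial observation is that because $A$ and $B$ act on the same $\Omega$, the measures $\eta$ and $\eta'$ come with a canonical coupling: they are the push-forwards of $\mu$ under the maps $\Phi_A, \Phi_B : \Omega \to \mathbb{R}^{2k}$ whose first $k$ coordinates agree (both equal $v_i(x)$) and whose last $k$ coordinates are $(v_iA)(x)$ versus $(v_iB)(x)$.

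Next I would invoke the well-known coupling bound for $d_{\rm LP}$: if $X, Y$ are $\mathbb{R}^{d}$-valued random variables on a common probability space and $\mathbb{P}(\|X-Y\|_2 \geq \varepsilon) \leq \varepsilon$, then $d_{\rm LP}(\mathcal{D}(X),\mathcal{D}(Y)) \leq \varepsilon$. It therefore suffices to find an $\varepsilon$ with $\mathbb{P}(\|\Phi_A - \Phi_B\|_2 \geq \varepsilon) \leq \varepsilon$ and bounded by the claimed quantity.

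To achieve this I would combine Markov's inequality with a union bound. For each coordinate index $i$, the definition of $\|\cdot\|_{\infty\to 1}$ and $\|v_i\|_\infty \leq 1$ give
\[
\mathbb{E}\,|(v_iA - v_iB)(x)| \;=\; \|v_i(A-B)\|_1 \;\leq\; \|A-B\|_{\infty\to 1},
\]
so Markov yields $\mathbb{P}(|(v_i(A-B))(x)| \geq \delta) \leq \|A-B\|_{\infty\to 1}/\delta$. A union bound over the $2k$ coordinates of $\Phi_A - \Phi_B$ (the first $k$ are identically zero and contribute nothing, but we may bound the sum trivially) gives
\[
\mathbb{P}\!\bigl(\|\Phi_A - \Phi_B\|_\infty \geq \delta\bigr) \;\leq\; \frac{2k\,\|A-B\|_{\infty\to 1}}{\delta}.
\]
Passing to the Euclidean norm via $\|\cdot\|_2 \leq \sqrt{2k}\,\|\cdot\|_\infty$ and substituting $\varepsilon := \delta\sqrt{2k}$ converts this into $\mathbb{P}(\|\Phi_A - \Phi_B\|_2 \geq \varepsilon) \leq 2k\|A-B\|_{\infty\to 1}/\delta$.

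Finally I would optimize: choosing $\delta = (2k)^{1/4}\|A-B\|_{\infty\to 1}^{1/2}$ makes the right-hand side equal to $\varepsilon = (2k)^{3/4}\|A-B\|_{\infty\to 1}^{1/2}$, exactly the required bound. The same coupling argument with $A,B$ swapped produces the matching bound in the other direction, completing the estimate for $d_H$. There is no real obstacle in the proof; the only technical choices are using the common-space coupling (so that the first $k$ coordinates cancel) and properly balancing the Markov parameter $\delta$ against the $\sqrt{2k}$ loss incurred when converting between $\ell^\infty$ and $\ell^2$ on $\mathbb{R}^{2k}$.
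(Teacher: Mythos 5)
Your proposal is correct and follows essentially the same route as the paper: the paper also takes the canonical coupling $\eta=\mathcal{D}_A(v_1,\dots,v_k)$, $\eta'=\mathcal{D}_B(v_1,\dots,v_k)$, observes $\|v_i(A-B)\|_1\leq\|A-B\|_{\infty\to 1}$, and then invokes Lemma~\ref{coupdist2} (in dimension $2k$) to get exactly $d_{\rm LP}(\eta,\eta')\leq\|A-B\|_{\infty\to 1}^{1/2}(2k)^{3/4}$. Your Markov-plus-union-bound derivation of the coupling estimate is precisely the proof of Lemma~\ref{coupdist} inlined, so the two arguments coincide step for step.
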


\begin{proof} Let $x\in \mathcal{S}_k(A)$ be arbitrary. We have that there are functions $v_1,v_2,\dots,v_k\in L^\infty_{[-1,1]}(\Omega)$  such that $x$ is equal to the probability measure $\mathcal{D}_A(\{v_i\}_{i=1}^k)$. Let $y:=\mathcal{D}_B(\{v_i\}_{i=1}^k)\in\mathcal{S}_k(B)$. Since  $$\|v_iA-v_iB\|_1\leq \|v_i\|_\infty\|A-B\|_{\infty\to 1}\leq\|A-B\|_{\infty\to 1}$$ holds for every $i\in [k]$, we have by Lemma \ref{coupdist2} that $d_{\rm LP}(x,y)\leq \|A-B\|_{\infty\to 1}^{1/2}(2k)^{3/4}$. We obtained that $$\sup_{x\in\mathcal{S}_k(A)}\inf_{z\in\mathcal{S}_k(B)}d_{\rm LP}(x,z)\leq \|A-B\|_{\infty\to 1}^{1/2}(2k)^{3/4}.$$ By switching the roles of $A$ and $B$ and repeating the same argument we get the above inequality with $A$ and $B$ switched. This implies the statement of the lemma.
\end{proof}

\begin{lemma}\label{limspdm} {\bf (Norm distance vs. $d_M$ distance)} Assume that $A,B$ are $P$-operators acting on the same space $L^\infty(\Omega)$. We have  $d_M(A,B)\leq 3\|A-B\|_{\infty\to 1}^{1/2}$.
\end{lemma}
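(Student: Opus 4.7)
The plan is direct: apply Lemma \ref{limlem3} term by term in the series defining $d_M$, and reduce the statement to a clean numerical inequality. Writing
$$d_M(A,B) = \sum_{k=1}^\infty 2^{-k}\, d_H(\mathcal{S}_k(A),\mathcal{S}_k(B))$$
and substituting the bound $d_H(\mathcal{S}_k(A),\mathcal{S}_k(B)) \leq \|A-B\|_{\infty\to 1}^{1/2}(2k)^{3/4}$ from Lemma \ref{limlem3} into each summand, I factor out the common $\|A-B\|_{\infty\to 1}^{1/2}$ and reduce the claim to the purely arithmetic inequality
$$2^{3/4}\sum_{k=1}^\infty k^{3/4}\, 2^{-k} \leq 3.$$

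For this numerical step I would use concavity of $t \mapsto t^{3/4}$. The tangent line at $t=1$ gives $k^{3/4} \leq 1/4 + 3k/4$ for every $k \geq 1$. Combined with the standard identities $\sum_{k=1}^\infty 2^{-k}=1$ and $\sum_{k=1}^\infty k\cdot 2^{-k}=2$, this yields
$$\sum_{k=1}^\infty k^{3/4}\, 2^{-k} \leq \frac{1}{4} + \frac{3}{4}\cdot 2 = \frac{7}{4}.$$
It then remains to check $2^{3/4}\cdot 7/4 < 3$, equivalently $2^{3/4} < 12/7$, equivalently (raising to the $4$th power) $8 < (12/7)^4 = 20736/2401$, which clearly holds.

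The proof has no conceptual obstacle — Lemma \ref{limlem3} carries the real content. The only care required is the constant: the exact sum $2^{3/4}\sum_{k=1}^\infty k^{3/4}2^{-k}$ is about $2.74$, so the margin below $3$ is modest, and the naive estimate $k^{3/4}\leq k$ would instead give $2^{3/4}\cdot 2 = 2^{7/4} \approx 3.36$, which is too large. The tangent-line trick above is the simplest sharpening that fits under the target constant. One could alternatively split the sum at some threshold $K$, using the trivial bound $d_H\leq 1$ for $k>K$, but this is not needed.
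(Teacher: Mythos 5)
Your proof is correct and follows exactly the same route as the paper: apply Lemma \ref{limlem3} to each summand in the definition of $d_M$ and bound the resulting numerical series by $3$. The paper simply asserts $\sum_{k\geq 1} 2^{-k}(2k)^{3/4}\leq 3$ without justification, whereas you supply a clean verification via the tangent-line bound $k^{3/4}\leq \tfrac14+\tfrac34 k$; that detail is a welcome addition but does not change the argument.
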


\begin{proof} Using Lemma \ref{limlem3} we obtain that 
\begin{equation*}d_M(A,B)\leq \|A-B\|_{\infty\to 1}^{1/2}\sum_{k=1}^\infty 2^{-k}(2k)^{3/4}\leq 3\|A-B\|_{\infty\to 1}^{1/2}.\end{equation*}
\end{proof}

Let $A\in\mathcal{B}(\Omega,\mathcal{A},\mu)$ be a $P$-operator. We define the bilinear form  $(f,g)_A$ for functions $f,g\in L^\infty(\Omega)$ by
\begin{equation}\label{skaleq}
(f,g)_A:=\int_{\Omega}~ (fA)g~d\mu = \mathbb{E}((fA)g).
\end{equation}
 Note that $$(f,g)_A\leq \|f\|_\infty\|g\|_\infty\|A\|_{\infty\to 1},$$ and thus $(f,g)_A$ is finite. In general if $\|A\|_{p\to q}\leq\infty$ holds for a conjugate pair with $1/p+1/q=1$, then we have 
\begin{equation}\label{skalineq}
 (f,g)_A\leq\|f\|_p\|g\|_p\|A\|_{p\to q}.
\end{equation}

 We define the cut norm of $A$ by 
$$\|A\|_\square:=\sup_{S,T\in\mathcal{A}} ~ |(1_S,1_T)_A|.$$
It is well known that 
$$\|A\|_\square\leq \|A\|_{\infty\to 1}\leq 4\|A\|_\square,$$ which means that $\|.\|_\square$ is equivalent to the norm $\|.\|_{\infty\to 1}$.
Let $\psi:\Omega\to\Omega$ be an invertible measure-preserving transformation with measure-preserving inverse. The transformation $\psi$ induces a natural, linear action on $L^\infty(\Omega)$, also denoted by $\psi$, defined by $f^\psi(x):=f(\psi(x))$. Furthermore, for $A\in\mathcal{B}(\Omega)$ let $A^\psi:=\psi^{-1}\circ A\circ\psi$. It is easy to see that if $A\in\mathcal{B}(\Omega)$, then $A^\psi\in\mathcal{B}(\Omega)$ and that $d_M(A,A^\psi)=0$. Let
$$\delta_\square(A,B):=\inf_{\psi,\phi}~\|A^\psi-B^\phi\|_\square,$$ where $\psi,\phi$ run through all invertible measure preserving transformations of $\Omega$. The proof of the next lemma follows from Lemma \ref{limspdm}.

\begin{lemma}\label{limspdm2} Assume that $A,B$ are $P$-operators acting on the same space $L^\infty(\Omega)$. Then $d_M(A,B)\leq 12\delta_\square(A,B)^{1/2}$.
\end{lemma}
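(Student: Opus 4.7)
The plan is to chain three already-available ingredients: Lemma \ref{limspdm}, the fact that measure-preserving conjugation leaves the $d_M$-class invariant (the ``$d_M(A,A^\psi)=0$'' remark in the excerpt), and the equivalence $\|\cdot\|_{\infty\to 1}\leq 4\|\cdot\|_\square$ stated just before the definition of $\delta_\square$.

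First I would note that $d_M$ is a pseudometric on $\mathcal{B}(\Omega)$: each $d_H(\mathcal{S}_k(\cdot),\mathcal{S}_k(\cdot))$ satisfies the triangle inequality (as $d_H$ is a pseudometric induced by the L\'evy--Prokhorov metric), and a positively weighted, absolutely convergent sum of pseudometrics is again a pseudometric. Hence, for any invertible measure-preserving maps $\psi,\phi:\Omega\to\Omega$,
\begin{equation*}
d_M(A,B) \;\leq\; d_M(A,A^\psi)+d_M(A^\psi,B^\phi)+d_M(B^\phi,B) \;=\; d_M(A^\psi,B^\phi),
\end{equation*}
using that $d_M(A,A^\psi)=d_M(B,B^\phi)=0$ by the invariance remark preceding the lemma.

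Next I would apply Lemma \ref{limspdm} to the conjugates $A^\psi,B^\phi\in\mathcal{B}(\Omega)$, which gives
\begin{equation*}
d_M(A^\psi,B^\phi)\;\leq\; 3\,\|A^\psi-B^\phi\|_{\infty\to 1}^{1/2},
\end{equation*}
and then the cut-norm equivalence $\|\cdot\|_{\infty\to 1}\leq 4\|\cdot\|_\square$ to convert the right-hand side into $6\,\|A^\psi-B^\phi\|_\square^{1/2}$. Taking the infimum over all $\psi,\phi$ yields $d_M(A,B)\leq 6\,\delta_\square(A,B)^{1/2}$, which is strictly stronger than the stated bound $12\,\delta_\square(A,B)^{1/2}$ (the factor $12$ in the statement is simply a loose constant).

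There is no real obstacle here: every non-trivial piece is already in place in the excerpt. The only thing to verify carefully is that the triangle inequality of $d_M$ interacts correctly with the ``$=0$'' identity for conjugates, which is immediate from the pseudometric structure. If one wanted to avoid invoking the invariance as a black box, the equality $d_M(A,A^\psi)=0$ can be checked directly by observing that the map $(v_1,\dots,v_k)\mapsto(v_1\circ\psi,\dots,v_k\circ\psi)$ is a bijection of $L^\infty_{[-1,1]}(\Omega)^k$ preserving joint distributions and intertwining the action of $A^\psi$ with that of $A$, so $\mathcal{S}_k(A)=\mathcal{S}_k(A^\psi)$ for every $k$.
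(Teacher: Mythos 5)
Your proof is correct and is essentially the argument the paper intends: it deduces the bound from Lemma \ref{limspdm} together with the conjugation-invariance $d_M(A,A^\psi)=0$ and the equivalence $\|\cdot\|_{\infty\to 1}\leq 4\|\cdot\|_\square$ stated just before the lemma, exactly as you do. Your bookkeeping even yields the sharper constant $6$ in place of $12$, the paper's constant simply being loose.
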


\section{$P$-operators with special properties}

The goal of this chapter is to show that various fundamental properties behave well with respect to $P$-operator convergence. 

\begin{definition}\label{defprops} Let $A\in\mathcal{B}(\Omega)$ be a $P$-operator. 
\begin{itemize}
\item $A$ is {\bf self-adjoint} if $(v,w)_A=(v,w)_A$ holds for every $v,w\in L^\infty(\Omega)$.
\item $A$ is {\bf positive} if $(v,v)_A\geq 0$ holds for every $v\in L^\infty(\Omega)$.
\item $A$ is {\bf positivity-preserving} if for every $v\in L^\infty(\Omega)$ with $v(x)\geq 0$ for almost every $x\in\Omega$, we have that $(vA)(x)\geq 0$ holds for almost every $x\in\Omega$.
\item $A$ is {\bf $c$-regular} if $1_\Omega A=c1_\Omega$ for some $c\in\mathbb{R}$.
\item $A$ is a {\bf graphop} if it is positivity-preserving and self-adjoint.
\item $A$ is a {\bf Markov graphop} if $A$ is a $1$-regular graphop.
\item $A$ is {\bf atomless} if $\Omega$ is atomless.
\end{itemize}
\end{definition}

\begin{lemma}\label{atclosed} Atomless $P$-operators are closed with respect to $d_M$.
\end{lemma}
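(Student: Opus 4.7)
The plan is to reduce to a common atomless probability space and then apply the construction of the limit provided by Theorem \ref{exlim}.

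Let $\{A_i\in\mathcal{B}(\Omega_i)\}_{i=1}^\infty$ be a Cauchy sequence in $d_M$ with each $\Omega_i$ atomless and standard Borel. Since every such space is measure-isomorphic to $([0,1],\lambda)$, and since $k$-profiles (and hence the weak equivalence class) are preserved by measure-preserving isomorphism, one may assume each $A_i$ acts on the common atomless space $([0,1],\lambda)$. Theorem \ref{exlim} then produces a limit $P$-operator $A$, and the task is to exhibit an atomless $P$-operator weakly equivalent to $A$.

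The cleanest concrete construction I would use is a Loeb--ultraproduct. Fix a non-principal ultrafilter $\mathcal{U}$ on $\mathbb{N}$, form the Loeb probability space $(\bar\Omega,\bar\mu)$ associated to $\prod_i([0,1],\lambda)$ along $\mathcal{U}$, and define $\bar A:L^\infty(\bar\Omega)\to L^1(\bar\Omega)$ by the rule $\bar A[(v_i)_i]=[(v_iA_i)_i]$. This is well-defined because the $\|\cdot\|_{\infty\to1}$ norms of the $A_i$ are uniformly bounded (the Cauchy property in $d_M$ forces the first-moment data of the $1$-profiles to be uniformly controlled, as in Lemma \ref{limitlem2}), so the ultralimit lies in $L^1(\bar\Omega)$. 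The space $(\bar\Omega,\bar\mu)$ is atomless: any Loeb-measurable set $\bar E$ with $\bar\mu(\bar E)>0$ is represented by $(E_i)_i$ with $\lambda(E_i)$ bounded away from $0$ along $\mathcal{U}$, atomlessness of $\lambda$ halves each $E_i$, and the ultraproduct of the halves splits $\bar E$ into two pieces of equal positive Loeb measure. A standard transfer computation shows that $\mathcal{S}_k^*(\bar A)$ is precisely the ultralimit of $\mathcal{S}_k(A_i)$ in $(\mathcal{P}(\mathbb{R}^{2k}),d_{\rm LP})$, which by the Cauchy assumption equals $\mathcal{S}_k^*(A)$. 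Hence $\bar A$ is weakly equivalent to $A$ and is atomless.

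The main obstacle I foresee is matching the ultraproduct limit $\bar A$ with the particular limit $A$ produced by Theorem \ref{exlim}: if that theorem is proved by a different route (for instance a weak-$*$ or diagonal construction on $[0,1]$ rather than an ultraproduct), one must verify that $\bar A$ and $A$ have identical $k$-profiles for every $k$, which reduces to uniqueness of the $d_M$-limit of a Cauchy sequence up to weak equivalence. An alternative, if the proof of Theorem \ref{exlim} already lives on a fixed atomless space $[0,1]$ by weak-$*$ extraction of the bilinear forms $(f,g)\mapsto(f,g)_{A_i}$, then the limit $A$ is automatically atomless and Lemma \ref{atclosed} is immediate from the reduction to a common space.
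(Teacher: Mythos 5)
Your proposal misses the elementary observation the paper's proof is built on: atomlessness is visible already in the $1$-profile. Since $\Omega$ is atomless, there is $v\in L^\infty_{[-1,1]}(\Omega)$ whose law is uniform on $[-1,1]$, so $\mathcal S_1(A)$ contains a measure $\alpha=\mathcal D_A(v)$ with atomless first marginal. If $d_M(A,B)=\varepsilon$, then $d_H(\mathcal S_1(A),\mathcal S_1(B))\le 2\varepsilon$, so there is $w\in L^\infty_{[-1,1]}(\Omega_2)$ with $d_{\rm LP}(\mathcal D_B(w),\alpha)\le 3\varepsilon$; projecting to the first coordinate shows $\mathcal D(w)$ is within $3\varepsilon$ of the uniform law, hence its largest atom --- and therefore the largest atom of $\Omega_2$ --- has weight $O(\varepsilon)$. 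Letting $\varepsilon\to 0$ gives the lemma. Nothing beyond the definition of $d_M$ and the $1$-profile is used, and in particular no boundedness hypothesis.

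Your Loeb--ultraproduct route is considerably heavier and does not quite deliver the claim. First, even if completed it only exhibits \emph{some} atomless representative of the limiting weak equivalence class; the lemma asserts (and the paper's proof establishes) the stronger statement that \emph{every} $P$-operator that is a $d_M$-limit of atomless ones is itself atomless, regardless of the space on which it happens to be defined --- in particular it shows atomlessness is a weak-equivalence invariant. Second, by routing through Theorem~\ref{exlim} you implicitly require uniform $(p,q)$-boundedness with $p<\infty$, a hypothesis absent from the lemma. Third, the claim ``a standard transfer computation shows $\mathcal S_k^*(\bar A)$ is the ultralimit of $\mathcal S_k(A_i)$'' is not automatic: the Loeb $\sigma$-algebra is strictly larger than the internal algebra, so a generic element of $L^\infty(\bar\Omega)$ is not of the form $[(v_i)_i]$ and must be approximated by internal functions before the transfer applies; that approximation step is not supplied. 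Finally, your ``alternative'' in the last paragraph assumes the construction in Theorem~\ref{exlim} lives on $[0,1]$, but in the paper it lives on $(\mathbb{R}^{F\times\{0,1\}},\kappa)$, which is not atomless a priori, so that shortcut does not apply.
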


\begin{proof} Let $A\in\mathcal{B}(\Omega)$ be an atomless $P$-operator and let $B\in\mathcal{B}(\Omega_2)$ with $d_M(B,A)=\varepsilon$. We have that there is a function $v\in L^{\infty}_{[-1,1]}(\Omega)$ such that the distribution of $v$ is uniform on $[-1,1]$. Let $\alpha:=\mathcal{D}_A(v)$.  It follows from $d_H(\mathcal{S}_1(A),\mathcal{S}_1(B))\leq 2\varepsilon$ that there is $\beta=\mathcal{D}_B(w)\in\mathcal{S}_1(B)$ with $d_{\rm LP}(\beta,\alpha)\leq 3\varepsilon$ and thus $d_{\rm LP}(\alpha_1,\beta_1)\leq3\varepsilon$, where $\alpha_1=\mathcal{D}(v)$ and $\beta_1=\mathcal{D}(w)$ are the marginals of $\alpha$ and $\beta$ on the first coordinate. It follows that $\beta_1$ is at most $3\varepsilon$ far from the uniform distribution in $d_{\rm LP}$, and thus the largest atom in $\beta_1$ is at most $10\varepsilon$. Hence the largest atom in $\Omega_2$ has weight at most $10\varepsilon=10d_M(B,A)$. We obtained that if $B$ is the limit of atomless operators, then $B$ is atomless. 
\end{proof}

\begin{proposition}\label{closedprop} Let $p\in[1,\infty]$ and $q\in (1,\infty)$. Let $\{A_i\in\mathcal{B}(\Omega_i)\}_{i=1}^\infty$ be a sequence of uniformly $(p,q)$-bounded  $P$-operators converging to a $P$-operator $A\in\mathcal{B}(\Omega)$. Then we have the following two statements.
\begin{enumerate}
\item If $A_i$ is positive for every $i$, then $A$ is also positive.
\item If $A_i$ is self-adjoint for every $i$, then $A$ is also self-adjoint.
\end{enumerate}
\end{proposition}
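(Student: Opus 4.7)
Both claims concern the bilinear form $(\cdot,\cdot)_A$ from \eqref{skaleq}, and the key observation is that this form can be recovered directly from low-order profiles of $A$. For $v,w\in L^\infty_{[-1,1]}(\Omega)$, the measure $\mathcal{D}_A(v,w)\in\mathcal{S}_2(A)$ on $\mathbb{R}^4$ has coordinates $(x_1,x_2,y_1,y_2)$ representing $(v,w,vA,wA)$, so
\[(v,w)_A=\int y_1 x_2\,d\mathcal{D}_A(v,w),\qquad (w,v)_A=\int y_2 x_1\,d\mathcal{D}_A(v,w),\]
and analogously $(v,v)_A=\int xy\,d\mathcal{D}_A(v)$ against the $1$-profile. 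Thus self-adjointness and positivity can be phrased as symmetry (resp.\ nonnegativity) of integrals of fixed continuous functions against every measure in the appropriate profile of $A$.

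\textbf{Approximation.} Fix $v,w\in L^\infty_{[-1,1]}(\Omega)$. Since $d_M(A_i,A)\to 0$ we have $d_H(\mathcal{S}_2(A_i),\mathcal{S}_2(A))\to 0$, so there exist $w_1^{(i)},w_2^{(i)}\in L^\infty_{[-1,1]}(\Omega_i)$ with $\mathcal{D}_{A_i}(w_1^{(i)},w_2^{(i)})\to\mathcal{D}_A(v,w)$ in the L\'evy--Prokhorov metric, i.e.\ weakly. Self-adjointness of each $A_i$ gives
\[\int y_1 x_2\,d\mathcal{D}_{A_i}(w_1^{(i)},w_2^{(i)})=\int y_2 x_1\,d\mathcal{D}_{A_i}(w_1^{(i)},w_2^{(i)}),\]
and if we can pass to the limit on both sides we obtain $(v,w)_A=(w,v)_A$ for $v,w\in L^\infty_{[-1,1]}$. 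A scaling argument (replace $f,g$ by $f/M,g/M$ with $M=\max(\|f\|_\infty,\|g\|_\infty,1)$) extends this to all of $L^\infty(\Omega)$, giving (2). For (1), the same scheme applied to the $1$-profile: choose $w^{(i)}$ with $\mathcal{D}_{A_i}(w^{(i)})\to\mathcal{D}_A(v)$, note $\int xy\,d\mathcal{D}_{A_i}(w^{(i)})=(w^{(i)},w^{(i)})_{A_i}\geq 0$, and pass to the limit.

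\textbf{Main obstacle: limit of an unbounded integrand.} The integrands $y_1 x_2$, $y_2 x_1$ and $xy$ are continuous but unbounded, so weak convergence of measures alone does not yield convergence of these integrals. Uniform integrability is needed, and this is precisely where the hypothesis $q>1$ is used. Since each $\mu_i$ is a probability measure, $\|w_j^{(i)}\|_p\leq\|w_j^{(i)}\|_\infty\leq 1$, and writing $C:=\sup_i\|A_i\|_{p\to q}<\infty$ we obtain
\[\int |y_j|^q\,d\mathcal{D}_{A_i}(w_1^{(i)},w_2^{(i)})=\|w_j^{(i)}A_i\|_q^q\leq C^q.\]
Because $|x_j|\leq 1$, the relevant integrands have $L^q$-norm uniformly bounded in $i$; as $q>1$, the de la Vall\'ee--Poussin criterion gives uniform integrability, and so weak convergence implies convergence of the integrals to $\int y_1 x_2\,d\mathcal{D}_A(v,w)=(v,w)_A$ and $\int y_2 x_1\,d\mathcal{D}_A(v,w)=(w,v)_A$ (and similarly for the $1$-profile). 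The same $L^q$-bound transfers to the limit measure by lower semicontinuity, so the expressions on the right are well defined. This completes the plan; the only non-routine ingredient is the uniform integrability step, and the rest is bookkeeping.
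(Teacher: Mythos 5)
Your proposal is correct and follows essentially the same route as the paper: both reduce the two claims to passing limits of integrals of the unbounded integrand $xy$ (or $x_2y_1$, $x_1y_2$) against weakly convergent measures from the $1$- and $2$-profiles, and both isolate the uniform integrability coming from the hypotheses $|x|\le 1$ and $\sup_i\|A_i\|_{p\to q}<\infty$ with $q>1$ as the non-routine step. The only cosmetic difference is that the paper packages this step as an explicit truncation estimate (Lemmas~\ref{closedlem1} and \ref{closedlem2}, built from H\"older and Markov) rather than invoking de la Vall\'ee--Poussin by name; the two are the same argument in different clothing.
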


\begin{proof} To prove the first claim let $v\in L^\infty_{[-1,1]}(\Omega)$. By the definition of convergence (Definition \ref{limconv}) there are elements $v_i\in L^\infty_{[-1,1]}(\Omega_i)$  such that $\mathcal{D}_A(v_i)$ weakly converges to $\mathcal{D}_A(v)$ as $i$ goes to infinity. By Lemma \ref{closedlem2} we have that $(v_i,v_i)_{A_i}=\mathbb{E}(v_i(A_iv_i))$ converges to $(v,v)_A=\mathbb{E}(v(Av))$. We have by the  assumption that $(v_i,v_i)_{A_i}\geq 0$ holds for every $i$, and thus $(v,v)_A\geq 0$ holds. 

To prove the second claim let $v,w\in L^\infty_{[-1,1]}(\Omega)$ and let $\mu:=\mathcal{D}_A(v,w)$. By the definition of convergence we have that for every $i\in\mathbb{N}$ there exist functions $v_i,w_i\in L_{[-1,1]}^\infty(\Omega_i)$ such that $\mu_i:=\mathcal{D}_{A_i}(v_i,w_i)$ weakly converges to $\mu$.  By Lemma \ref{closedlem2} we obtain that $\mathbb{E}(v_i(w_iA_i))$ weakly converges to $\mathbb{E}(v(wA))$ and $\mathbb{E}((v_iA_i)w_i)$ weakly converges to $\mathbb{E}((vA)w)$ as $i$ goes to infinity. On the other hand we have that $$\mathbb{E}(v_i(w_iA_i))=(w_i,v_i)_{A_i},\qquad \mathbb{E}(v(wA))=(w,v)_A,$$ $$\mathbb{E}((v_iA_i)w_i)=(v_i,w_i)_{A_i},\qquad \mathbb{E}((vA)w)=(v,w)_A.$$
Since by assumption we have $(v_i,w_i)_{A_i}=(w_i,v_i)_{A_i}$ the proof is complete. 
\end{proof} 

\begin{proposition}\label{closedprop2} Let $p\in [1,\infty),q\in [1,\infty],c\in\mathbb{R}$ and let $\{A_i\in\mathcal{B}(\Omega_i)\}_{i=1}^\infty$ be a sequence of uniformly $(p,q)$-bounded  $P$-operators converging to a $P$-operator $A\in\mathcal{B}(\Omega)$. Then we have the following two statements.
\begin{enumerate}
\item If $A_i$ is positivity-preserving for every $i$, then $A$ is also positivity-preserving.
\item If $A_i$ is $c$-regular for every $i$, then $A$ is also $c$-regular.
\end{enumerate}
\end{proposition}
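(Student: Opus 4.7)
The plan is to mimic the structure of the proof of Proposition~\ref{closedprop}: to verify each property for the limit $A$, I start from a generic test situation on $\Omega$, use Definition~\ref{limconv} to pull it back to a sequence of test situations on the $\Omega_i$, apply the corresponding property for $A_i$, and transport the conclusion to $A$ by weak convergence of joint distributions. The finite $p$ hypothesis will be used in both parts to turn weak convergence of a single marginal into $L^p$-convergence, which via the uniform $(p,q)$-bound yields $L^q$-convergence of the images under $A_i$.

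For part~(1), I first note that positivity-preservation of $A$ is equivalent to the statement that for every $v\in L^\infty_{[0,1]}(\Omega)$ the probability measure $\mathcal{D}(v,vA)$ is supported on the closed set $[0,1]\times[0,\infty)$. I would fix such a $v$ and, by Definition~\ref{limconv}, produce $v_i\in L^\infty_{[-1,1]}(\Omega_i)$ with $\mathcal{D}_{A_i}(v_i)\to\mathcal{D}_A(v)$ weakly. The crucial step is to replace $v_i$ by its positive part $v_i^+:=\max(v_i,0)$, so that $v_i^+A_i\geq 0$ by the assumption on $A_i$ and hence $\mathcal{D}_{A_i}(v_i^+)$ is supported on $[0,1]\times[0,\infty)$. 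Because $\mathcal{D}(v_i)$ concentrates on $[0,1]$ in the limit and $|v_i|\leq 1$, one obtains $\|v_i-v_i^+\|_p\to 0$; the uniform $(p,q)$-bound then gives $\|v_iA_i-v_i^+A_i\|_q\to 0$, and on a probability space $L^q$-convergence ($q\geq 1$) implies convergence in probability. Therefore $d_{\rm LP}(\mathcal{D}_{A_i}(v_i^+),\mathcal{D}_{A_i}(v_i))\to 0$, so $\mathcal{D}_{A_i}(v_i^+)\to \mathcal{D}_A(v)$ weakly as well. Since weak limits preserve mass on closed sets, $\mathcal{D}(v,vA)$ is itself supported on $[0,1]\times[0,\infty)$, i.e.\ $vA\geq 0$ almost everywhere.

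For part~(2), I apply the same machinery to the single test function $v=1_\Omega$. Its marginal is $\delta_1$ and $c$-regularity of $A$ is the statement $\mathcal{D}(1_\Omega A)=\delta_c$. Action convergence yields $v_i\in L^\infty_{[-1,1]}(\Omega_i)$ with $\mathcal{D}_{A_i}(v_i)\to\mathcal{D}(1_\Omega,1_\Omega A)$; the first marginal tends to $\delta_1$, so the same truncation-style estimate gives $\|v_i-1_{\Omega_i}\|_p\to 0$. Uniform $(p,q)$-boundedness together with $1_{\Omega_i}A_i=c\,1_{\Omega_i}$ then gives $\|v_iA_i-c\,1_{\Omega_i}\|_q\to 0$, so $\mathcal{D}(v_iA_i)\to\delta_c$. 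But $\mathcal{D}(v_iA_i)$ is the second marginal of $\mathcal{D}_{A_i}(v_i)$ and therefore also converges weakly to the second marginal $\mathcal{D}(1_\Omega A)$; uniqueness of weak limits forces $\mathcal{D}(1_\Omega A)=\delta_c$, i.e.\ $1_\Omega A=c\,1_\Omega$.

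The main technical hurdle is the truncation step in part~(1): Definition~\ref{limconv} only allows approximation of $\mathcal{D}_A(v)$ by distributions coming from $v_i\in L^\infty_{[-1,1]}$, not $L^\infty_{[0,1]}$, so the positivity-preserving property of $A_i$ cannot be applied to $v_i$ directly. Replacing $v_i$ by $v_i^+$ is only harmless because $p<\infty$ lets a measure-small negative tail of $v_i$ contribute an arbitrarily small $L^p$ norm; in the $p=\infty$ regime this step would break down. The remainder is bookkeeping with standard facts about weak convergence (convergence in probability implies $d_{\rm LP}$-convergence; weak limits preserve closed-set support; weak convergence of joint distributions passes to marginals), so I do not expect any further serious obstacle.
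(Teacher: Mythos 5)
Your proof is correct and follows essentially the same route as the paper's: fix a test function on $\Omega$, pull it back to $L^\infty_{[-1,1]}$ functions on $\Omega_i$ via Definition~\ref{limconv}, replace them by nonnegative surrogates (you use $v_i^+$, the paper uses $|v_i|$ -- immaterial), show that the replacement costs little in $L^p$ and hence, via the uniform $(p,q)$-bound, little in $d_{\rm LP}$, and conclude by portmanteau-type preservation of closed-set support under weak limits. The intermediate estimate you re-derive by hand is exactly the paper's Lemma~\ref{applem2}, so the substance is identical.
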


\begin{proof} Let $v\in L^\infty_{[0,1]}(\Omega)$. Then there is a sequence $\{v_i\in L_{[-1,1]}^\infty(\Omega_i)\}_{i=1}^\infty$  such that $\mathcal{D}_{A_i}(v_i)$ weakly converges to $\mathcal{D}_A(v)$. The fact that $\mathcal{D}(v_i)$ weakly converges to the non-negative distribution $\mathcal{D}(v)$ implies that $\mathcal{D}(v_i-|v_i|)$ weakly converges to $\delta_0$. It follows from Lemma \ref{applem2} that $d_{\rm LP}(\mathcal{D}_{A_i}(v_i),\mathcal{D}_{A_i}(|v_i|))$ converges to $0$ and so $(v,vA)$ is the weak limit of $\mathcal{D}_{A_i}(|v_i|)$. Since $|v_i|A_i$ is non-negative for every $i$  we obtain that $vA$ is non-negative. 

Let $v_i\in L_{[-1,1]}^\infty(\Omega_i)$ be a sequence of functions such that $\mathcal{D}_{A_i}(v_i)$ weakly converges to $\mathcal{D}_A(1_\Omega)$. We have that $\mathcal{D}(v_i-1_{\Omega_i})$ weakly converges to $\delta_0$ and hence by Lemma \ref{applem2} we have that $d_{\rm LP}(\mathcal{D}_{A_i}(1_{\Omega_i}),\mathcal{D}_{A_i}(v_i))$ goes to $0$ as $i$ goes to infinity. It follows that $\mathcal{D}_{A_i}(1_{\Omega_i})$ weakly converges to $\mathcal{D}_A(1_\Omega)$. Since $1_{\Omega_i} A_i=c1_{\Omega_i}$, the proof is complete.
\end{proof}

\begin{lemma}\label{closedprop3} Let $A\in\mathcal{B}(\Omega)$ be a Markov graphop. Then $\|A\|_{2\to 2}=1$. 
\end{lemma}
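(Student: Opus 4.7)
The lower bound is immediate: since $1_\Omega A = 1_\Omega$ and $\|1_\Omega\|_2 = 1$, we have $\|A\|_{2\to 2} \geq 1$. The substance of the lemma is the upper bound, $\|fA\|_2 \leq \|f\|_2$ for every $f \in L^\infty(\Omega)$.

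The plan is to first establish the pointwise ``Jensen-type'' inequality
\[
((fA)(x))^2 \leq (f^2 A)(x) \quad \text{for a.e. } x \in \Omega.
\]
To get this I would use the three assumptions on $A$ together. For any $t \in \mathbb{R}$, the function $(f - t \cdot 1_\Omega)^2$ is non-negative, so by the positivity-preserving property, $((f - t \cdot 1_\Omega)^2 \cdot A)(x) \geq 0$ for almost every $x$. Expanding and using linearity together with $1_\Omega A = 1_\Omega$, this becomes
\[
(f^2 A)(x) - 2t (fA)(x) + t^2 \geq 0.
\]
To get this simultaneously for all $t \in \mathbb{R}$ on a common full-measure set, I would restrict $t$ to the rationals and take the countable intersection of full-measure sets; continuity in $t$ then extends the inequality to all real $t$. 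Viewing the left-hand side as a quadratic in $t$ and requiring non-negative values for all $t$ forces the discriminant condition $((fA)(x))^2 \leq (f^2 A)(x)$ a.e., which is exactly the desired pointwise Jensen inequality.

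Once this is in hand, the norm bound follows by integration and one use of self-adjointness:
\[
\|fA\|_2^2 = \int_\Omega (fA)^2 \, d\mu \leq \int_\Omega (f^2 A) \, d\mu = (f^2, 1_\Omega)_A = (1_\Omega, f^2)_A = \int_\Omega (1_\Omega A) f^2 \, d\mu = \int_\Omega f^2 \, d\mu = \|f\|_2^2,
\]
using the bilinear form $(\cdot,\cdot)_A$ from~(\ref{skaleq}), self-adjointness in the middle equality, and $1$-regularity in the second-to-last. Taking suprema over $f \in L^\infty(\Omega)$ with $\|f\|_2 \leq 1$ and using density of $L^\infty$ in $L^2$ then yields $\|A\|_{2\to 2} \leq 1$.

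The only delicate point I anticipate is the measure-theoretic bookkeeping in the Jensen step: the positivity-preserving property only provides, a priori, a full-measure set depending on $t$, so one must reduce to rational $t$ and invoke continuity of the quadratic $t \mapsto (f^2 A)(x) - 2t(fA)(x) + t^2$ to produce a single full-measure set on which the inequality holds for all $t$. After that, the proof is essentially a two-line consequence of self-adjointness and $1$-regularity.
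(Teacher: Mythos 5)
Your proof is correct, but it takes a genuinely different route from the paper. The paper first proves the auxiliary norm bound $\|A\|_{\infty\to\infty}=1$ (using positivity-preserving together with $1$-regularity to show that $L^\infty_{[0,1]}$ is mapped into $L^\infty_{[0,1]}$, then splitting a general bounded function into positive and negative parts), and then argues that $\|vA^k\|_2\leq\|vA^k\|_\infty\leq\|v\|_\infty$ for all $k$, invoking the spectral theorem for the self-adjoint operator $A$ to conclude that the spectrum lies in $[-1,1]$. Your proof instead establishes the pointwise Jensen-type inequality $(fA)^2\leq f^2A$ a.e., which follows from applying the positivity-preserving property to $(f-t\cdot 1_\Omega)^2\geq 0$, expanding via $1$-regularity, and taking the discriminant (the rational-$t$-plus-continuity trick you flag is exactly the right measure-theoretic fix); you then integrate and use self-adjointness and $1$-regularity once each. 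Both proofs use all three defining properties of a Markov graphop. Your argument is the classical contractivity argument for Markov kernels and is more elementary in that it avoids the spectral theorem entirely (which, strictly speaking, requires knowing $A$ is a \emph{bounded} self-adjoint operator on $L^2$ in the first place---a point the paper's proof glosses over, though it can be supplied by Riesz--Thorin interpolation from the $\|A\|_{\infty\to\infty}=1$ bound). Your argument also yields the useful pointwise inequality $(fA)^2\leq f^2A$ as a byproduct, whereas the paper's yields the (also useful) $L^\infty$-contractivity as a byproduct.
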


\begin{proof}  First we show that $\|A\|_{\infty\to\infty}=1$.
Let $v\in L_{[0,1]}^\infty(\Omega)$. We have that $1_\Omega-v$ is non-negative and thus $(1_\Omega-v)A=1_\Omega-vA$ is non-negative. It follows that $vA$ is non-negative with $\|vA\|_\infty\leq 1$. Let $v\in L^\infty_{[-1,1]}(\Omega)$. We can write $v=v_1-v_2$ such that $\|v_1\|_\infty,\|v_2\|_\infty\leq 1$ and both $v_1,v_2$ are non-negative. We have shown that the values of $v_1A$ and $v_2A$ are in $[0,1]$ and so $vA=v_1A-v_2A$ takes values in $[-1,1]$. It follows that $\|vA\|_\infty\leq 1$. In general, we have for $v\in L^\infty(\Omega)$ with $m:=\|v\|_\infty$ that $v\in L^\infty_{[-m,m]}$ and thus by linearity and the previous statement we obtain that $\|vA\|_\infty\leq m.$

The fact that $1_\Omega A=1_\Omega$ implies that $\|A\|_{2\to 2}\geq 1$. Now let $v\in L^\infty(\Omega)$ be arbitrary. Then for every $k\in\mathbb{N}$ we have that $\|vA^k\|_2\leq\|vA^k\|_\infty\leq\|v\|_\infty$. By the spectral theorem this is  possible only if $\|A\|_{2\to 2}\leq 1$.
\end{proof}

\begin{theorem}\label{markcomp} Let $\mathcal{M}$ be the set of weak equivalence classes of Markov graphops. Then $(\mathcal{M},d_M)$ is a compact metric space. 
\end{theorem}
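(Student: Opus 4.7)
The plan is to realise $\mathcal{M}$ as a closed subset of a known compact space and invoke Theorem \ref{thm:compact}. By Lemma \ref{closedprop3}, every Markov graphop $A$ satisfies $\|A\|_{2\to 2}=1$, and since operator norms of the form $\|\cdot\|_{p\to q}$ are invariant under weak equivalence, the inclusion $\mathcal{M}\subseteq \mathcal{X}_{2,2,1}$ is well defined. Theorem \ref{thm:compact} applied with $p=q=2$ and $c=1$ (both parameters lie in the required ranges $[1,\infty)$ and $(1,\infty]$) yields that $(\mathcal{X}_{2,2,1},d_M)$ is a compact metric space. Since $(\mathcal{M},d_M)$ inherits its metric from this ambient space, it suffices to show that $\mathcal{M}$ is closed in $\mathcal{X}_{2,2,1}$.

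To prove closedness, I would take any sequence $\{A_i\}_{i=1}^\infty$ of Markov graphops with $d_M(A_i,A)\to 0$ for some $P$-operator $A$ representing a class in $\mathcal{X}_{2,2,1}$, and show that $A$ is itself (weakly equivalent to) a Markov graphop. By hypothesis the sequence is uniformly $(2,2)$-bounded, each $A_i$ is self-adjoint, positivity-preserving and $1$-regular. Proposition \ref{closedprop}(2) with $p=q=2$ (admissible since $2\in[1,\infty]$ and $2\in(1,\infty)$) transfers self-adjointness to the limit. Proposition \ref{closedprop2}(1) and (2) with $p=q=2$ and $c=1$ (admissible since $2\in[1,\infty)$ and $2\in[1,\infty]$) transfer positivity-preservation and $1$-regularity, respectively. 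Thus $A$ is a Markov graphop and its weak equivalence class lies in $\mathcal{M}$, showing that $\mathcal{M}$ is closed.

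A closed subset of a compact metric space is itself compact, so $(\mathcal{M},d_M)$ is compact, completing the proof. No genuine obstacle arises: the work has already been done in the preceding lemmas and propositions, and the only thing to verify is that the parameters $p=q=2$ simultaneously satisfy the hypotheses of the three ingredients (norm control via Lemma \ref{closedprop3}, ambient compactness via Theorem \ref{thm:compact}, and closure of the defining properties via Propositions \ref{closedprop} and \ref{closedprop2}), which they do.
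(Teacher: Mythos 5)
Your proof is correct and takes essentially the same approach as the paper: both use Lemma \ref{closedprop3} to pin down the $\|\cdot\|_{2\to2}$ norm and Propositions \ref{closedprop} and \ref{closedprop2} to show that self-adjointness, positivity-preservation and $1$-regularity pass to the limit, and both derive compactness from Theorem \ref{exlim}. The only cosmetic difference is that you route the compactness through Theorem \ref{thm:compact} (which is itself a consequence of Theorem \ref{exlim}) while the paper cites Lemma \ref{lem:sc} and Theorem \ref{exlim} directly, and you spell out the parameter admissibility checks that the paper leaves implicit.
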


\begin{proof} Lemma \ref{closedprop3} together with Proposition \ref{closedprop} and Proposition \ref{closedprop2} implies that if $\{A_i\}_{i=1}^\infty$ is a sequence of Markov graphops with limit $A$, then $A$ is also a Markov graphop. The compactness follows from Lemma \ref{lem:sc} and Theorem \ref{exlim}.
\end{proof}

\section{Construction of the limit object}\label{proofexlim}

In this section we prove Theorem \ref{exlim}. 
Let $\{(\Omega_i,\mathcal{A}_i,\mu_i)\}_{i=1}^\infty$ be a sequence of measure spaces. Assume that $\{A_i\in\mathcal{B}_{p,q}(\Omega_i)\}_{i=1}^\infty$ is a convergent sequence of $P$-operators such that $\sup_{i\in\mathbb{N}}\|A_i\|_{p\rightarrow q}\leq c$ for some $c\in\mathbb{R}^+$. For every $k\in\mathbb N$ we define \[X_k:=\lim_{i\rightarrow\infty} \mathcal{S}^*_k(A_i).\] 
We wish to prove that there is a $P$-operator $A\in\mathcal{B}_{p,q}(\Omega)$ for some probability space $(\Omega,\mathcal{A},\mu)$ such that for every $k\in\mathbb{N}$ we have that $$\lim_{i\to\infty}\mathcal{S}^*_k(A_i)=\mathcal{S}_k^*(A).$$ 
We will need the next algebraic notion.

\begin{definition}\label{fsg} {\bf (Free  semigroup with operators)} Let $G$ and $L$ be sets. We denote by $F(G,L)$ the free semigroup with generator set $G$ and operator set $L$ (freely acting on $F(G,L)$).  More precisely, we have that $F(G,L)$ is the smallest set of abstract words satisfying the following properties.
\begin{enumerate} 
\item $G\subseteq F(G,L)$.
\item If $w_1,w_2\in F(G,L)$, then $w_1w_2\in F(G,L)$.
\item If $w\in F(G,L), l\in L$, then $l(w)\in F(G,L)$.
\end{enumerate}
There is a unique length function $m: F(G,L)\to\mathbb{N}$ such that $m(g)=1$ for $g\in G$, $m(w_1w_2)=m(w_1)+m(w_2)$ and $m(l(w))=m(w)+1$. 
\end{definition}

An example for a word in $F(G,L)$ is $l_3(l_1(g_1g_2l_2(g_2)g_3))l_3(g_2)g_1$, where $g_1,g_2,g_3\in G$ and $l_1,l_2,l_3\in L$. Note that if both $G$ and $L$ are countable sets, then so is $F(G,L)$.

\noindent{\bf Construction of a function system.}~In this technical part of the proof we construct a function system $\{v_{i,f}\in L^\infty(\Omega_i)\}_{i\in\mathbb{N},f\in F}$ for some countable index set $F$. Later we will use this function system to construct a probability distribution $\kappa\in\mathcal{P}(\mathbb{R}^{F\times\{0,1\}})$ and an operator $A\in \mathcal B_{p,q}(\mathbb{R}^{F\times\{0,1\}},\kappa)$. We will show that $A$ is an appropriate limit object for the sequence $\{A_i\}_{i=1}^\infty$. 

First we describe the index set $F$. For every $k\in\mathbb{N}$ let $X_k'\subseteq X_k$ be a countable dense subset in the metric space $(X_k,d_{\rm LP})$. Let $G:=\bigcup_{k=1}^\infty X_k'\times [k]$. Let $L=\mathbb{Q}\times\mathbb{Q}^+$. Let $F:=F(G,L)$ be as in Definition \ref{fsg}. We have that $F$ is countable.

Now we describe the functions $\{v_{i,g}\}_{i\in\mathbb{N},g\in G}$. For every $i,k\in\mathbb{N}$ and $t\in X_k'$ let $\{v_{i,(t,j)}\}_{j=1}^k$ be a system of functions in $L^\infty_{[-1,1]}(\Omega_i)$ such that the joint distribution of 
$$(v_{i,(t,1)},v_{i,(t,2)},\dots,v_{i,(t,k)},v_{i,(t,1)}A_i,v_{i,(t,2)}A_i,\dots,v_{i,(t,k)}A_i)$$
converges to $t$ as $i$ goes to infinity. 

To continue with our construction we need to interpret elements of $L$ as non-linear operators on function spaces. For $y\in\mathbb{Q}$ and $z\in\mathbb{Q}^+$ let $h_{y,z}:\mathbb{R}\to\mathbb{R}$ be the bounded, continuous function defined by $h_{y,z}(x)=0$ if $x\notin (y-z,y+z)$ and $h_{y,z}(x)=1-|x-y|/z$ if $x\in (y-z,y+z)$. For every $i\in\mathbb{N}, l\in L$ and $v\in L^{\infty}(\Omega_i)$ we define $l(v):=h_{y,z}\circ (v A_i)$, where $l$ is given by the pair $(y,z)\in\mathbb{Q}\times\mathbb{Q}^+$. Note that by definition $\|l(v)\|_\infty\leq 1$.

Now for every $i\in\mathbb{N}$ we construct the functions $\{v_{i,w}\}_{i\in\mathbb{N},w\in F}$ recursively to the length of $m(w)$. For words of length $1$ the functions are already constructed above. Assume that for some $1\leq k\in\mathbb{N}$ we have already constructed all the functions $v_{i,w}$ with $m(w)\leq k$. Let $w\in F$ such that $m(w)=k+1$. If $w=w_1w_2$ for some $w_1,w_2\in F$, then $v_{i,w}:=v_{i,w_1}v_{i,w_2}$. If $w=l(w_1)$, then $v_{i,w}:=l(v_{i,w_1})$.

\noindent {\bf Construction of the probability space.}~Let $\xi_i:\Omega_i\to\mathbb{R}^{F\times\{0,1\}}$ be the function such that for $f\in F,e\in\{0,1\}$ and $\omega_i\in\Omega_i$ the $(f,e)$ coordinate of $\xi_i(\omega_i)$ is equal to $(v_{i,f}A_i^e)(\omega_i)$, where $A_i^0$ is defined to be the identity operator. Let $\kappa_i\in\mathcal{P}(\mathbb{R}^{F\times\{0,1\}})$ denote the distribution of the random variable $\xi_i$. (In other words $\kappa_i$ is the joint distribution of the functions $\{v_{i,f}\}_{f\in F}$ and $\{v_{i,f}A_i\}_{f\in F}$.) Since $\tau(\kappa_i)\leq c$ holds for every $i$ (recall equation \eqref{eq:tau} for the definition of $\tau$), we have that there is a strictly growing sequence $\{n_i\}_{i=1}^\infty$ of natural numbers such that $\kappa_{n_i}$ is weakly convergent with limit $\kappa$ as $i$ goes to infinity. Let $\Omega:=\mathbb{R}^{F\times\{0,1\}}$ be the probability space with the Borel $\sigma$-algebra $\mathcal{A}$ and measure $\kappa$.

\noindent{\bf Construction of the operator.}~ We will define an operator $A\in \mathcal B_{p,q}(\Omega)$ with $\Omega$ defined above.  For $(f,e)\in F\times\{0,1\}$ let $\pi_{(f,e)}:\mathbb{R}^{F\times\{0,1\}}\to\mathbb{R}$ denote projection function to the coordinate at $(f,e)$. Notice that 
\begin{equation}\label{eq:pi} \pi_{(f,e)} \circ \xi_i=v_{i,f}A_i^e\qquad (i\in \mathbb N, (f, e)\in F\times \{0,1\}). \end{equation} In particular, due to the definition of $\kappa$, we also have $\pi_{(f,0)}\in L^{\infty}_{[-1,1]}(\Omega)$  for $f\in F$.  
Our goal is to show that there is a unique $(p,q)$-bounded  linear operator $A$ from $L^{\infty}(\Omega)$ to $L^{1}(\Omega)$ with $\|A\|_{p\rightarrow q}\leq c$ such that $\pi_{(f,0)}A=\pi_{(f,1)}$ holds for every $f\in F$.

\begin{lemma}\label{limconstlem1} The coordinate functions on $\mathbb{R}^{F\times\{0,1\}}$ have the following properties.
\begin{enumerate}
\item If $f_1,f_2\in F$, then $\pi_{(f_1f_2,0)}=\pi_{(f_1,0)}\pi_{(f_2,0)}$ holds in $L^{\infty}(\Omega)$.
\item If $f\in F$ and $l=(y,z)$ holds for some $y,z$, then $\pi_{(l(f),0)}=h_{y,z}\circ\pi_{(f,1)}$ holds in $L^{\infty}(\Omega)$.
\item If $a_1,a_2,\dots,a_k\in F,~\lambda_1,\lambda_2,\dots,\lambda_k\in\mathbb{R}$, then $$\Big\|\sum_{j=1}^k \lambda_j \pi_{(a_j,1)}\Big\|_q\leq c\Big\|\sum_{j=1}^k \lambda_j \pi_{(a_j,0)}\Big\|_p .$$
\item The linear span of the functions $\{\pi_{(f,0)}\}_{f\in F}$ is dense in the space $L^p(\Omega)$.

\item Assume that $k\in\mathbb{N}$ and $t\in X_k'$. Then $(t,j)\in G\subset F$ holds for $1\leq j\leq k$ and
$$\mathcal{D}(\pi_{((t,1),0)},\pi_{((t,2),0)},\dots,\pi_{((t,k),0)},\pi_{((t,1),1)},\pi_{((t,2),1)},\dots,\pi_{((t,k),1)})=t.$$

\end{enumerate}
\end{lemma}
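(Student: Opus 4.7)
The plan is to exploit the fact that, by construction, the pushforward measures $\kappa_{n_i}$ of $\mu_{n_i}$ under $\xi_{n_i}$ converge weakly to $\kappa$, while several of the desired identities hold pointwise on each $\Omega_{n_i}$ before the limit is taken. All five statements are then obtained by transferring finite-index facts about the functions $\{v_{n_i,f}\}$ to $\kappa$-a.s.\ statements about the coordinate projections.

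\emph{Parts (1), (2) and (5).} By the recursive construction of the $v_{i,w}$ and by \eqref{eq:pi}, the continuous functions $F_1:=\pi_{(f_1f_2,0)}-\pi_{(f_1,0)}\pi_{(f_2,0)}$ and $F_2:=\pi_{(l(f),0)}-h_{y,z}\circ\pi_{(f,1)}$ satisfy $F_j\circ\xi_i\equiv 0$ on $\Omega_i$ and each depends on only finitely many coordinates. Integrating $g\circ F_j$ against $\kappa_{n_i}$ for a bounded continuous $g$ gives $g(0)$ for every $i$, and weak convergence passes this to $\kappa$, so $F_j=0$ $\kappa$-a.s., which is (1) and (2). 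For (5), the joint distribution under $\kappa_{n_i}$ of the coordinates indexed by $\{((t,j),e): j\in[k],e\in\{0,1\}\}$ is by construction the distribution of $(v_{n_i,(t,1)},\dots,v_{n_i,(t,k)},v_{n_i,(t,1)}A_{n_i},\dots,v_{n_i,(t,k)}A_{n_i})$ on $\Omega_{n_i}$, which converges weakly to $t$ by the choice of the $v_{i,(t,j)}$.

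\emph{Part (3).} Starting from the operator bound $\|(\sum_j\lambda_j v_{n_i,a_j})A_{n_i}\|_q\le c\|\sum_j\lambda_j v_{n_i,a_j}\|_p$ and linearity, I would rewrite both sides in terms of the coordinate projections and the measures $\kappa_{n_i}$. The right-hand side equals $c\|\sum_j\lambda_j\pi_{(a_j,0)}\|_{L^p(\kappa_{n_i})}$, and since every $\pi_{(a_j,0)}\in[-1,1]$, the integrand is bounded and continuous, so weak convergence yields the limit $c\|\sum_j\lambda_j\pi_{(a_j,0)}\|_{L^p(\kappa)}$. The left-hand side equals $\|\sum_j\lambda_j\pi_{(a_j,1)}\|_{L^q(\kappa_{n_i})}$; here the integrand is non-negative continuous but possibly unbounded, so only the Portmanteau lower bound $\|\sum_j\lambda_j\pi_{(a_j,1)}\|_{L^q(\kappa)}\le\liminf_i\|\sum_j\lambda_j\pi_{(a_j,1)}\|_{L^q(\kappa_{n_i})}$ is available, which is exactly what is needed. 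The case $q=\infty$ is handled separately, using lower semicontinuity of the essential supremum under weak convergence.

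\emph{Part (4).} This is the main obstacle. The plan is: (i) since $F$ is closed under concatenation, $V_0:=\mathrm{span}\{\pi_{(f,0)}:f\in F\}$ is closed under products by (1), hence is an algebra of $[-1,1]$-valued bounded functions; (ii) the constant function $1$ lies in the $L^p$-closure $V$ of $V_0$, because for any fixed $g\in G$ and $l=(0,z)$ with $z\in\mathbb{Q}^+$, part (2) gives $\pi_{(l(g),0)}=h_{0,z}\circ\pi_{(g,1)}$, which tends to $1$ pointwise $\kappa$-a.s.\ as $z\to\infty$, and dominated convergence lifts this to $L^p$ since the integrand is bounded by $1$; (iii) by Stone--Weierstrass on $[-1,1]^n$, $V$ then contains $\phi(\pi_{(f_1,0)},\dots,\pi_{(f_n,0)})$ for every continuous $\phi$, and extension to Borel $\phi$ via $L^p$-density of continuous functions combined with Doob--Dynkin gives all bounded $\sigma(\pi_{(f_1,0)},\dots,\pi_{(f_n,0)})$-measurable functions; (iv) finally, $\sigma(\pi_{(f,0)}:f\in F)=\mathcal{A}$ modulo $\kappa$-null sets, because by (2) each $\pi_{(f,1)}$ is $\kappa$-a.s.\ determined by the countable separating family $\{h_{y,z}\circ\pi_{(f,1)}\}_{(y,z)\in\mathbb{Q}\times\mathbb{Q}^+}=\{\pi_{(l(f),0)}\}_{l\in L}$. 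Combining these points yields $V=L^p(\Omega,\kappa)$.
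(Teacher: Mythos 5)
Your proof is correct and for parts (1), (2), (3), (5) mirrors the paper's argument almost exactly: the paper phrases (1), (2) and (5) as ``each $\kappa_{n_i}$ is supported in a closed set, hence so is $\kappa$'' while you phrase it via test integrals $\int g\circ F_j\,d\kappa_{n_i}=g(0)$ for bounded continuous $g$, which is the same weak-convergence fact; and part (3) is handled identically, with the same split between the bounded-continuous right-hand side and the Portmanteau lower bound on the left. Your explicit remark about $q=\infty$ (lower semicontinuity of the essential supremum under weak convergence) is a small but genuine improvement over the paper, which silently covers only $q<\infty$ in this step.

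Where you take a genuinely different route is part (4). The paper combines Lemma~\ref{limconstlem2} with part (2) to prove the stronger fact that $\pi_{(f,1)}$ lies in $\mathcal{H}_q$ (the $L^q$-closure of the span of the $\pi_{(g,0)}$), deduces $\mathcal{A}_0=\mathcal{A}$ modulo $\kappa$-nulls from this, and cites Lemma~\ref{limconstlem3} as a black box for $\mathcal{H}_r=L^r(\mathcal{A}_0)$. You instead prove only the weaker statement that $\pi_{(f,1)}$ is $\mathcal{A}_0$-measurable modulo nulls, directly from the fact that $\{h_{y,z}\}_{(y,z)\in\mathbb{Q}\times\mathbb{Q}^+}$ is a countable injective family on $\mathbb{R}$ together with a Doob--Dynkin argument; and you unfold Lemma~\ref{limconstlem3} into an explicit Stone--Weierstrass argument, with a cleaner proof that $1$ lies in the closure (via $h_{0,z}\circ\pi_{(g,1)}\to 1$ pointwise as $z\to\infty$ and dominated convergence, rather than the paper's terse appeal to approximating the constant function within $X_1'$). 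The paper's route produces $\pi_{(f,1)}\in\mathcal{H}_q$ as a free by-product of the truncation lemma; yours is more self-contained, avoids Lemma~\ref{limconstlem2} and the external reference entirely, and isolates exactly the separating-family property of the $h_{y,z}$ that is actually being used. Both routes are sound.
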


\begin{remarka} Note that when functions on $\Omega$ are treated as functions in $L^r(\Omega)$ for some $r\in [1,\infty]$, then they are considered to be equal if they differ on a $\kappa$ zero measure set. This kind of weak equality of functions enables various algebraic correspondences between different coordinate functions, which would be impossible in a strict sense. As a toy example let us consider the uniform measure $\mu$ on $\{(x,x):x\in [0,1]\}$ which is a Borel measure on $\mathbb{R}^2$. We have that the $x$-coordinate function $(x,y)\mapsto x$ is equal to the $y$-coordinate function $(x,y)\mapsto y$ in the space $L^r(\mathbb{R}^2,\mu)$, because they agree on the support of $\mu$.
\end{remarka}

The proof of Lemma \ref{limconstlem1} will use the following two lemmas.

\begin{lemma}\label{limconstlem2} Let $r\in [1,\infty)$. For every $v\in L^r(\Omega)$ we have that
$$\lim_{n\to \infty} \Bigl\|v-\sum_{j=-n^2}^{n^2} (j/n)h_{j/n,1/n}\circ v\Bigr\|_r=0.$$
\end{lemma}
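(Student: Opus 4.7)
Write $v_n(x) := \sum_{j=-n^2}^{n^2}(j/n)\, h_{j/n,1/n}(x)$ for the scalar function being applied pointwise to $v$. The plan is to prove that (i) $v_n(x) = x$ on the symmetric interval $[-n,n]$, (ii) $|v_n(x)| \leq |x|$ for all $x \in \mathbb{R}$, and then (iii) conclude by dominated convergence.

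For step (i), observe that the shifted hat functions $\{h_{j/n,1/n}\}_{j\in\mathbb Z}$ form a partition of unity: if $x \in [k/n,(k+1)/n]$ with $-n^2 \leq k < n^2$, then only $h_{k/n,1/n}$ and $h_{(k+1)/n,1/n}$ are nonzero at $x$, and they take values $1 - (nx-k)$ and $nx-k$ respectively. A direct computation gives
\[
v_n(x) = \tfrac{k}{n}(1-(nx-k)) + \tfrac{k+1}{n}(nx-k) = x.
\]
For step (ii), in the transition region $x \in (n, n+1/n)$ only $j = n^2$ contributes (since $h_{(n^2-1)/n,1/n}$ vanishes on $[n,\infty)$), giving $v_n(x) = n(1-n(x-n)) \in [0,n] \subseteq [0,x]$. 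For $x \geq n + 1/n$ the sum is empty, so $v_n(x) = 0$. The symmetric situation handles $x < -n$, so $|v_n(x)| \leq |x|$ everywhere.

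Combining (i) and (ii) yields the pointwise limit $v_n(x) \to x$ for every $x \in \mathbb{R}$ (because, for each fixed $x$, eventually $n \geq |x|$ and then $v_n(x) = x$), and the uniform pointwise bound
\[
|v_n(v(\omega)) - v(\omega)| \leq |v_n(v(\omega))| + |v(\omega)| \leq 2|v(\omega)|.
\]
Since $(2|v|)^r \in L^1(\Omega)$ by assumption, the dominated convergence theorem applied to $|v_n \circ v - v|^r$ gives $\|v - v_n \circ v\|_r \to 0$, completing the proof.

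There is no real obstacle here: the only content is the elementary partition-of-unity identity for the hat functions, together with the dominating bound $|v_n(x)| \leq |x|$. Once these are in hand, dominated convergence finishes the argument immediately.
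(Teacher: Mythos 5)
Your proof is correct, and it takes a genuinely different route than the paper's. Both begin from the elementary computation that $s_n(x):=\sum_{j=-n^2}^{n^2}(j/n)\,h_{j/n,1/n}(x)$ equals $x$ on $[-n,n]$, so that $v-s_n\circ v$ is supported on $\{|v|>n\}$. The paper then only records the crude bound $|s_n(x)|\leq n$ off $[-n,n]$, splits $\|v-s_n\circ v\|_r$ into $\|v\,1_{\{|v|>n\}}\|_r$ plus a term of order $n\,\mu(|v|>n)^{1/r}$, and controls the two pieces separately via absolute continuity of the integral and a Markov-type tail estimate, with a case distinction between $r=1$ and $r>1$. You instead push the pointwise analysis one step further and establish the sharper bound $|s_n(x)|\leq |x|$ for all $x\in\mathbb{R}$, by identifying the partition-of-unity interpolant on $[-n,n]$, the single surviving triangular bump on the transition interval $(n,n+1/n)$, the vanishing beyond $n+1/n$, and the odd symmetry. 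This buys the uniform domination $|s_n\circ v - v|\leq 2|v|$ with $(2|v|)^r\in L^1(\Omega)$, so a single application of the dominated convergence theorem closes the argument --- no tail estimate, no Markov inequality, no case split in $r$. Your version is the cleaner of the two and, as a bonus, entirely sidesteps the delicate bookkeeping the paper needs for the second term when $r>1$.
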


\begin{proof} Let $s_n:=\sum_{j=-n^2}^{n^2} (j/n)h_{j/n,1/n}$. The statement is equivalent to $\lim_{n\to\infty}\|v-s_n\circ v\|_r=0$. An elementary calculation shows that $s_n(x)=x$ if $x\in [-n,n]$ and $|s_n(x)|\leq n$ holds for $x\in K_n:=\mathbb{R}\setminus [-n,n]$. Now we have that $$v-s_n\circ v=1_{K_n}\circ(v-s_n\circ v)=1_{K_n}\circ v-1_{K_n}\circ s_n\circ v,$$ and hence $$\|v-s_n\circ v\|_r\leq \|1_{K_n}\circ v\|_r+n\mathbb{P}(|v|\geq n)=\|1_{K_n}\circ v\|_r+n\mathbb{P}(|v|^r\geq n^r),$$ where $v$ is viewed as a random variable on $\Omega$. For $r=1$, it is well known that $v\in L^1(\Omega)$ implies $\lim_{n\rightarrow\infty} n \mathbb P(|v|\geq n)=0$. For $r>1$, by Markov's inequality we have that $\mathbb{P}(|v|^r\geq n^r)$ is at most $\|v\|_r^r/n^r$ and thus $n\mathbb{P}(|v|\geq n)$ converges to $0$ as $n$ goes to infinity. Now it suffices to show that $\lim_{n\to\infty}\|1_{K_n}\circ v\|_r=0$. Let $U_n:=\{x:x\in\Omega , |v(x)|>n\}$. We have that $\|1_{K_n}\circ v\|^r_r=\int_{U_n} |v|^r d\kappa$.  The fact that $v$ is measurable implies that $\lim_{n\to\infty} \kappa(U_n)=0$ and $\lim_{n\to\infty}\int_{U_n} |v|^r d\kappa=0$.
\end{proof}

The following lemma is easy to prove, see e.g.\ Theorem 22.4 in the lecture notes \cite{lemma45}.

\begin{lemma}\label{limconstlem3} Let $r\in [1,\infty)$. Let $\{v_i\in L^\infty(\Omega)\}_{i\in I}$ be a system of functions for some countable index set $I$ such that for every $a,b\in I$ there is $c\in I$ with $v_av_b=v_c$. Let $\mathcal{A}_0$ be the $\sigma$-algebra generated by the functions $\{v_i\}_{i\in I}$. Suppose that the constant $1$ function on $\Omega$ can be approximated by a uniformly bounded family of finite linear combinations of $\{v_i\}_{i\in I}$. Then the $L^r$-closure of the linear span of $\{v_i\in L^\infty(\Omega)\}_{i\in I}$ is $L^r(\Omega,\mathcal{A}_0,\kappa)$.
\end{lemma}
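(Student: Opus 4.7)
The plan is to set $V := \mathrm{span}_{\mathbb R}\{v_i : i \in I\} \subseteq L^\infty(\Omega)$ and to let $W$ denote its closure in $L^r(\Omega, \mathcal{A}_0, \kappa)$, then progressively enlarge the class of functions known to lie in $W$ until it exhausts $L^r(\Omega, \mathcal{A}_0, \kappa)$. The multiplicative closure hypothesis makes $V$ a subalgebra of $L^\infty(\Omega)$: every product of generators is a generator, so products of elements of $V$ are finite linear combinations of generators. The uniformly bounded approximation of $1_\Omega$ by elements of $V$, together with dominated convergence on the probability space $\Omega$, places $1_\Omega\in W$. Consequently, for every finite tuple $v_{i_1},\dots,v_{i_k}$ and every $p\in\mathbb R[x_1,\dots,x_k]$, one has $p(v_{i_1},\dots,v_{i_k})\in\mathbb R\cdot 1_\Omega+V\subseteq W$.

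Next I would upgrade this polynomial calculus to a continuous one. By Stone--Weierstrass on the compact box $K:=\prod_{j=1}^{k}[-\|v_{i_j}\|_\infty,\|v_{i_j}\|_\infty]$, every continuous $\phi:\mathbb R^k\to\mathbb R$ is a uniform limit on $K$ of polynomials $p_n$; since $p_n(v_{i_1},\dots,v_{i_k})$ is uniformly bounded and converges almost everywhere to $\phi(v_{i_1},\dots,v_{i_k})$, dominated convergence yields $L^r$-convergence, hence $\phi(v_{i_1},\dots,v_{i_k})\in W$. Approximating an indicator $1_B$ of a Borel set $B\subseteq\mathbb R^k$ by a uniformly bounded sequence of continuous functions converging to $1_B$ pointwise almost everywhere with respect to the pushforward of $\kappa$ (standard Urysohn/regularity), and invoking dominated convergence once more, puts $1_{\{(v_{i_1},\dots,v_{i_k})\in B\}}$ in $W$.

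Such finite cylinder sets form a generating $\pi$-system for $\mathcal{A}_0$. The collection $\mathcal{H}$ of bounded $\mathcal{A}_0$-measurable functions belonging to $W$ is a vector space containing $1_\Omega$ and all such indicators, and it is closed under bounded monotone limits (once again by dominated convergence). The monotone class theorem for functions therefore gives $\mathcal{H}=L^\infty(\Omega,\mathcal{A}_0,\kappa)$. For a general $f\in L^r(\Omega,\mathcal{A}_0,\kappa)$, the truncations $f\cdot 1_{\{|f|\le n\}}$ are bounded, $\mathcal{A}_0$-measurable, and converge to $f$ in $L^r$ by dominated convergence, so $f\in W$.

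The only mildly delicate transition is the continuous-to-Borel step, and it is handled cleanly because $\Omega$ carries a probability measure and all relevant approximating sequences can be kept uniformly bounded, so $L^r$-convergence follows from almost everywhere convergence by dominated convergence for every $r\in[1,\infty)$; this is precisely the reason the statement excludes $r=\infty$.
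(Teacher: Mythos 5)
The paper does not supply its own proof of this lemma; it defers to an external reference (Theorem~22.4 of Driver's lecture notes, \cite{lemma45}). Your argument is the standard one for such density statements and is correct: the multiplicative hypothesis makes $V$ an algebra, the bounded approximation of $1_\Omega$ and dominated convergence put the constants in the $L^r$-closure $W$, Stone--Weierstrass on the compact range then gives $\phi(v_{i_1},\dots,v_{i_k})\in W$ for continuous $\phi$, and a regularity/Urysohn argument followed by the functional monotone class theorem carries this to all bounded $\mathcal{A}_0$-measurable functions and, by truncation, to all of $L^r(\Omega,\mathcal{A}_0,\kappa)$. One small remark: the explicit continuous-to-Borel step via Lusin/Urysohn can be skipped, since the class $\mathcal{M}=\{\phi(v_{i_1},\dots,v_{i_k}):\phi\in C_b(\mathbb{R}^k),\ k\in\mathbb{N}\}$ is already multiplicatively closed, generates $\mathcal{A}_0$, and sits inside $W$, so the multiplicative form of the functional monotone class theorem applies directly; but your route is equally valid.
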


Now we return to the proof of Lemma \ref{limconstlem1}.

\begin{proof}
To prove the first statement of the lemma recall that by the construction of the function system we have for every $i\in\mathbb{N}$ and $f_1,f_2\in F$ that $v_{i,f_1f_2}=v_{i,f_1}v_{i,f_2}$ holds.  Therefore, by equation \eqref{eq:pi}, we have that each $\kappa_i$  is supported inside the closed set $$\big\{\omega: \omega\in\mathbb{R}^{F\times\{0,1\}},~\pi_{(f_1f_2,0)}(\omega)=\pi_{(f_1,0)}(\omega)\pi_{(f_2,0)}(\omega)\big\}.$$ This implies that $\kappa$ is also supported inside this set and thus $\pi_{(f_1f_2,0)}=\pi_{(f_1,0)}\pi_{(f_2,0)}$ holds $\kappa$-almost everywhere. 

The proof of the second statement is similar to the first one. Again by the construction of the function system we have for every $i\in\mathbb{N}$ and $f\in F,l=(p,q)\in L$ that $v_{i,l(f)}=l(v_{i,f})=h_{p,q}\circ (v_{i,f} A_i)$. This means by the definition of $\kappa_i$ and equation \eqref{eq:pi} that $\kappa_i$ is supported on the closed set
$$\big\{\omega: \omega\in\mathbb{R}^{F\times\{0,1\}},~\pi_{(l(f),0)}(\omega)=h_{p,q}(\pi_{(f,1)}(\omega))\big\}$$ for every $i\in\mathbb{N}$. Thus $\pi_{(l(f),0)}=h_{p,q}\circ\pi_{(f,1)}$ holds $\kappa$-almost everywhere.

For the proof of the third statement, let us use that $\|A_i\|_{p\to q}\leq c$ holds for every $i\in\mathbb{N}$ and hence
$$\Big\|\sum_{j=1}^k \lambda_j v_{i,a_j}A_i\Big\|_q\leq c\Big\|\sum_{j=1}^k \lambda_j v_{i,a_j}\Big\|_p .$$ 
Since the sum on the right-hand side is a function in $L^\infty(\Omega_i)$ whose values are in the compact interval $[-\lambda,\lambda]$ for $\lambda:=\sum_{j=1}^k |\lambda_j|$,  we have that $\sum_{j=1}^k \lambda_j \pi_{(a_j,0)}$ is a bounded, continuous function on the support of $\kappa$. Therefore, using $\kappa_{n_i}\stackrel{w}\rightarrow \kappa$ and  equation \eqref{eq:pi} again (in particular, integrating of the $p$th power of the absolute values with respect to $\kappa_i$), we obtain that  
$$\lim_{i\to\infty}\Big\|\sum_{j=1}^k \lambda_j v_{n_i,a_j}\Big\|_p=\Big\|\sum_{j=1}^k \lambda_j \pi_{(a_j,0)}\Big\|_p.$$
On the other hand, $\big|\sum_{j=1}^k \lambda_j \pi_{(a_j,1)}|^q$ is a nonnegative continuous function, thus weak convergence in this case implies the following inequality: 
$$\Big\|\sum_{j=1}^k \lambda_j \pi_{(a_j,1)}\Big\|_q\leq\limsup_{i\to\infty}\Big\|\sum_{j=1}^k \lambda_j v_{n_i,a_j}A_i\Big\|_q.$$ These inequalities together yield the third statement.

For the fourth statement, let $\mathcal{H}_r$ denote the $L^r$-closure of the linear span of the functions $\{\pi_{(f,0)}\}_{f\in F}$ for $r\in [1,\infty)$. First we show that $\pi_{(f,1)}\in \mathcal{H}_q$ holds for every $f\in F$. By the second statement we have that
\begin{equation}\label{limconsteq1}
\sum_{j=-n^2}^{n^2} (j/n)h_{j/n,1/n}\circ \pi_{(f,1)}=\sum_{j=-n^2}^{n^2} (j/n) \pi_{(l_j(f),0)},
\end{equation}
 where $l_j$ is given by the pair $(j/n,1/n)$ for $-n^2\leq j\leq n^2$. Since the right-hand side of (\ref{limconsteq1}) is in $\mathcal{H}_q$, we obtain that the left-hand side is also in $\mathcal{H}_q$. On the other hand, we have $\pi_{(f,1)}\in L^q(\Omega)$ due to the third statement.  Hence by Lemma \ref{limconstlem2} we obtain that, as $n$ goes to infinity, the left-hand side of (\ref{limconsteq1}) converges to $\pi_{(f,1)}$ in $L^q(\Omega)$ and thus $\pi_{(f,1)}\in \mathcal{H}_q$.
 
Let $\mathcal{A}_0$ be the $\sigma$-algebra generated by the functions $\{\pi_{(f,0)}\}_{f\in F}$. Notice that the constant $1$ function on $\Omega$ can be approximated already in $X_1'$. We have by the first statement in this lemma and Lemma \ref{limconstlem3} that $\mathcal{H}_r=L^r(\Omega,\mathcal{A}_0,\kappa)$ holds for every $r\in [1,\infty)$. As we have shown, we have for every $f\in F$ that $\pi_{(f,1)}\in \mathcal{H}_q=L^q(\Omega,\mathcal{A}_0,\kappa)$ and thus all coordinate functions on $\mathbb{R}^{F\times\{0,1\}}$ are measurable in $\mathcal{A}_0$. This shows that $\mathcal{H}_r=L^r(\Omega,\mathcal{A}_0,\kappa)=L^r(\Omega,\mathcal{A},\kappa)=L^r(\Omega)$ holds for every $r\in [1,\infty)$.

The last statement of the lemma follows directly from the definition of the functions $\{v_{i,(t,j)}\}_{i\in\mathbb{N},j\in [k]}$ and the definition of $\kappa$.
\end{proof}

We are ready to define the operator $A\in\mathcal{B}_{p,q}(\Omega)$. For $f\in F$, let $\pi_{(f,0)}A=\pi_{(f,1)}$. This defines a linear operator on the linear span of $\{\pi_{(f,0)}\}_{f\in F}$, which is bounded due to the third statement of Lemma \ref{limconstlem1}. Hence it has a unique continuous linear extension on its $L^p$-closure. By the fourth statement of the same lemma we get that there is a unique operator $A\in\mathcal{B}_{p,q}(\Omega)$ with $\|A\|_{p\to q}\leq c$ such that $\pi_{(f,0)}A=\pi_{(f,1)}$ holds for every $f\in F$.

\noindent{\bf Last part of the proof.}~The last statement of Lemma \ref{limconstlem1} together with the equality $\pi_{((t,j),0)}A=\pi_{((t,j),1)}$ imply that for every $k\in\mathbb{N}$ and $t\in X_k'$ we have  $t\in\mathcal{S}_k(A)$. Therefore for every $k\in\mathbb{N}$ we have that $X_k\subseteq \mathcal{S}_k^*(A)$. Our goal is to show that $X_k=\mathcal{S}_k^*(A)$ for every $k\in\mathbb{N}$ and thus it remains to prove that $\mathcal{S}_k^*(A)\subseteq X_k$.

Let $k\in\mathbb{N}$ and let $v_1,v_2,\dots,v_k\in L^\infty_{[-1,1]}(\Omega)$. We need to show that $\alpha:=\mathcal{D}_A(\{v_j\}_{j=1}^k)$ is in  $X_k$. Let $\varepsilon>0$ be arbitrary. We have by the fourth statement of Lemma \ref{limconstlem1} that for some large enough natural number $m$ there are elements $f_1, f_2, \dots, f_m\in F$ and real numbers $\{\lambda_{a,j}\}_{a\in [m],j\in [k]}$ such that for every $j\in [k]$ we have $\|w_j-v_j\|_p\leq\varepsilon$, where $w_j:=\sum_{a=1}^m \lambda_{a,j}\pi_{(f_a,0)}$ for $j\in [k]$.

Since only vectors with infinity norm at most $1$ are considered in the profile, we will use a truncating function. Namely, let $\tilde h:\mathbb{R}\to\mathbb{R}$ be the continuous function with $\tilde h(x)=x$ for $x\in[-1,1]$, $\tilde h(x)=-1$ for $x\in (-\infty,-1]$ and $\tilde h(x)=1$ for $x\in [1,\infty)$. Since $\|v_j\|_\infty\leq 1$ holds for $j\in [k]$ we have that $|w_j(x)-v_j(x)|\geq |\tilde h\circ w_j(x)-v_j(x)|$ holds almost everywhere and thus by $\|w_j-v_j\|_p\leq\varepsilon$ we obtain $\|\tilde h\circ w_j-v_j\|_p\leq\varepsilon$ for $j\in[k]$. This also implies by the triangle inequality that 
\begin{equation}\label{limconsteq3}
\|\tilde h\circ w_j-w_j\|_p\leq\|\tilde h\circ w_j-v_j\|_p+\|v_j-w_j\|_p\leq 2\varepsilon
\end{equation}
holds for $j\in [k]$.

For $i\in\mathbb{N}$ and $j\in[k]$ let $z_{i,j}:=\sum_{a=1}^m \lambda_{a,j}v_{i,f_a}$. Let $\beta_i:=\mathcal{D}_{A_i}(\{z_{i,j}\}_{j=1}^k)$. By the definition of $\kappa$ we have that  $$\beta:=\lim_{i\to\infty}\beta_{n_i}=\mathcal{D}_A(\{w_j\}_{j=1}^k)$$ holds in $d_{\rm LP}$. Since  $w_j\in L^{\infty}(\Omega)$, we have $\|v_jA-w_jA\|_1\leq \|v_jA-w_jA\|_q\leq c\|v_j-w_j\|_p\leq c\varepsilon$. We get from Lemma \ref{coupdist2} that $d_{\rm LP}(\alpha,\beta)\leq (2k)^{3/4}(c'\varepsilon)^{1/2}$, where $c':=\max(c,1)$.

Let 
$$\beta'_i:=\mathcal{D}_{A_i}\big(\{\tilde h\circ z_{i,j}\}_{j=1}^k).$$
Notice that the functions $\tilde h\circ z_{i,j}-z_{i,j}$ are uniformly bounded and their distribution converges weakly to the distribution of $\tilde h\circ w_j-w_j$. Hence if $i$ is large enough, then by (\ref{limconsteq3}) we have that $\|\tilde h\circ z_{i,j}-z_{i,j}\|_p\leq 3\varepsilon$ holds for $j\in [k]$  and thus $d_{\rm LP}(\beta'_i,\beta_i)\leq (2k)^{3/4}(3c'\varepsilon)^{1/2}$ by Lemma \ref{coupdist2}. 

Let $\{n_i'\}_{i=1}^\infty$ be a subsequence of $\{n_i\}_{i=1}^\infty$ such that $\beta':=\lim_{i\to\infty}\beta'_{n'_i}$ exists. Observe that $\beta'\in X_k$ and $d_{\rm LP}(\beta',\beta)\leq (2k)^{3/4}(3c'\varepsilon)^{1/2}$. We obtain that $$d_{\rm LP}(X_k,\alpha)\leq d_{\rm LP}(\beta',\alpha)\leq d_{\rm LP}(\beta',\beta)+d_{\rm LP}(\beta,\alpha)\leq 3(2k)^{3/4}(c'\varepsilon)^{1/2}.$$ This holds for every $\varepsilon>0$ and thus $\alpha\in X_k$. \hfill  $\square$

\section{General graph limits}\label{chapgenlim}

\label{general}

There are various ways of representing graphs by operators and in particular by $P$-operators. Depending on the chosen representation we get a corresponding limit notion for graphs. In this section we list four natural operator representations of graphs and investigate the corresponding graph limit notions. Let $G$ be a finite graph on the vertex set $V(G)=[n]$ with edge set $E(G)$. 

\noindent{\bf Adjacency operator convergence:}~ Recall that $\mu_{[n]}$ denotes the uniform distribution on $[n]$.  We denote by $A(G)\in\mathcal{B}([n],\mu_{[n]})$ the $P$-operator defined by 
$$(vA(G))(i):=\sum_{(j,i)\in E(G)} v(j)$$ for $i\in [n]$. We have that $d\geq\|A(G)\|_{2\to 2}\geq d^{1/2}$, where $d$ is the maximal degree in $G$. We can say that a graph sequence $\{G_i\}_{i=1}^\infty$ is convergent if $\{A(G_i)\}_{i=1}^\infty$ is an action convergent sequence of $P$-operators. We obtain compactness for graphs with uniformly bounded degree. Quite surprisingly (and non-trivially) it turns out that this convergence notion is equivalent to local-global convergence, which is a refinement of Benjamini--Schramm convergence (see \cite{BS, HLSz}). However, for graphs with non-bounded degrees compactness is not guaranteed. This can be solved by scaling the operators $A(G)$ by some number that depends on $G$. For example we have that $\|A(G)/|V(G)|\|_{2\to 2}\leq 1$ holds for every graph $G$. Again quite surprisingly it turns out that convergence of $A(G_i)/|V(G_i)|$ is equivalent to dense graph convergence. (For a definition of dense graph convergence see \cite{LSz}.) This motivates us to introduce scaling functions that map graphs to positive real numbers. Let $\mathcal{G}$ denote the set of isomorphism classes of finite graphs. 

\begin{definition} Let $f:\mathcal{G}\to\mathbb{R}^+$ be a function. We say that a graph sequence $\{G_i\}_{i=1}^\infty$ is adjacency operator convergent (or just convergent) with scaling $f$ if the sequence $\{A(G_i)/f(G_i)\}_{i=1}^\infty$ is an action convergent sequence of $P$-operators. 
\end{definition}

Recall that a {\it graphop} is a self-adjoint, positivity-preserving $P$-operator. The word graphop is a mixture of the words graph and operator. Note that both graphons and graphings used in graph limit theory are graphops. Theorem \ref{exlim} implies the following.

\begin{theorem} Let $\{G_i\}_{i=1}^\infty$ be an adjacency operator convergent sequence of graphs with scaling $f:\mathcal{G}\to\mathbb{R}^+$. Assume that there exist $p\in [1,\infty),q\in (1,\infty)$ and $c\in\mathbb{R}^+$ such that $\|G_i/f(G_i)\|_{p\to q}\leq c$ holds for every $i\in\mathbb{N}$. Then there is a graphop $A$ such that $\lim_{i\to\infty} A(G)/f(G_i)=A$ holds. We say that $A$ is the adjacency operator limit of $(\{G_i\}_{i=1}^\infty,f)$.
\end{theorem}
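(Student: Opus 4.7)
The plan is to assemble this as an immediate corollary of the existence-of-limit theorem together with the two closure propositions for $P$-operator properties. Write $A_i := A(G_i)/f(G_i)$. The hypotheses exactly match those of Theorem \ref{exlim}: the sequence $\{A_i\}_{i=1}^\infty$ is action convergent and $\sup_i \|A_i\|_{p\to q}\leq c$ with $p\in[1,\infty)$ and $q\in(1,\infty)$. Therefore Theorem \ref{exlim} produces a $P$-operator $A$ on some probability space $\Omega$ with $\lim_{i\to\infty} d_M(A_i, A)=0$ and $\|A\|_{p\to q}\leq c$. This takes care of the existence and convergence portion of the statement.

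It remains only to verify that this limit object is a graphop, i.e.\ self-adjoint and positivity-preserving in the sense of Definition \ref{defprops}. For each $i$, the adjacency matrix $A(G_i)$ of an undirected graph is a symmetric $\{0,1\}$-matrix; scaling by the positive real number $f(G_i)$ preserves both symmetry and non-negativity of the entries. Hence each $A_i$ is self-adjoint and positivity-preserving as a $P$-operator on the finite probability space $([n_i],\mu_{[n_i]})$.

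Now I invoke the closure results. Since the sequence is uniformly $(p,q)$-bounded with $p\in[1,\infty]$ and $q\in(1,\infty)$, Proposition \ref{closedprop}(2) gives that $A$ is self-adjoint. Since we also have $p\in[1,\infty)$ and $q\in[1,\infty]$, Proposition \ref{closedprop2}(1) gives that $A$ is positivity-preserving. Combining, $A$ is a graphop by the definition of graphop in Definition \ref{defprops}, and this is the asserted adjacency operator limit.

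I do not anticipate any real obstacle here: the argument is a direct concatenation of the compactness/existence theorem with the two closure propositions, and the only thing to check is that the hypotheses on $p$ and $q$ assumed in the statement (namely $p\in[1,\infty)$, $q\in(1,\infty)$) lie in the intersection of the ranges required by Propositions \ref{closedprop} and \ref{closedprop2}, which they do. No additional analysis of the specific combinatorial structure of the $G_i$ is needed beyond the two elementary observations that adjacency matrices are symmetric and non-negative.
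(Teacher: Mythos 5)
Your proposal is correct and matches the paper's intended argument: the paper simply remarks that Theorem \ref{exlim} implies the statement, leaving implicit exactly the concatenation you spell out, namely applying Theorem \ref{exlim} for existence and then Propositions \ref{closedprop} and \ref{closedprop2} to transfer self-adjointness and the positivity-preserving property to the limit. Your verification that the hypothesis window $p\in[1,\infty)$, $q\in(1,\infty)$ sits inside the ranges required by all three results is the only nontrivial bookkeeping, and you handled it correctly.
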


A natural scaling  is $f_{p,q}(G):=\|A(G)\|_{p\to q}$ defined for non-empty graphs $G$ ($f_{p,q}$ can be defined as $1$ on the empty graph), where $p\in [1,\infty),q\in (1,\infty]$. Let us call it {\it norm scaling}. With this scaling every graph sequence has a convergent subsequence, hence we have sequential compactness for arbitrary graph sequences. Norm scaling leads to a general convergence notion that generalizes local-global convergence and recovers dense graph limits up to a constant multiplicative factor in the limit object. The norm scaling is very convenient to use for general graph sequences where no other natural normalization is given.

\medskip

\noindent{\bf Random walk convergence of graphs:}~Let $\nu_G$ denote the stationary measure of the random walk on $G$. It is well known that  $\nu_G$ is the probability measure on $[n]$ with $\nu_G(i)=d_i/(2|E(G)|)$,  where $d_i$ is the degree of the vertex $i$ for $i\in [n]$. We denote by $M(G)\in\mathcal{B}_{2,2}([n],\nu_G)$ the $P$-operator defined by equation \eqref{eq:vmg}. The operator $M(G)$ is known as the Markov kernel corresponding to the random walk on $G$. We have that $M(G)$ is a Markov graphop. Consequently, by Lemma \ref{closedprop3} we have that $\|M(G)\|_{2\rightarrow 2}=1$. (If $G$ has no edges, then $M(G)$ is not defined.)

\begin{definition} A graph sequence of non-empty graphs $\{G_i\}$ is called  random walk convergent if $\{M(G_i)\}_{i=1}^\infty$ is a convergent sequence of $P$-operators. 
\end{definition}

The following theorem is a direct consequence of Theorem \ref{markcomp}. 

\begin{theorem} Every graph sequence $\{G_i\}_{i=1}^\infty$ has a random walk convergent subsequence. If $\{G_i\}_{i=1}^\infty$ is random walk convergent, then there is a Markov graphop $A$ such that $\lim_{i\to \infty} M(G_i)=A$. We say that $A$ is the random walk limit of $\{G_i\}_{i=1}^\infty$.
\end{theorem}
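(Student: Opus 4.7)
The plan is to deduce everything directly from the compactness statement Theorem~\ref{markcomp}, after verifying that each $M(G_i)$ sits inside the space $\mathcal{M}$ of (weak equivalence classes of) Markov graphops. First I would spell out that $M(G_i)\in\mathcal{B}_{2,2}([n_i],\nu_{G_i})$ is a Markov graphop for every non-empty finite graph $G_i$: positivity-preservation is immediate from formula~(\ref{eq:vmg}) since $M(G_i)$ averages with non-negative weights; self-adjointness on $L^2(V(G_i),\nu_{G_i})$ follows from the reversibility identity $\nu_{G_i}(i)M(G_i)(i,j)=(2|E(G_i)|)^{-1}\mathbf{1}_{\{(i,j)\in E(G_i)\}}=\nu_{G_i}(j)M(G_i)(j,i)$; and $1$-regularity $1_{V(G_i)}M(G_i)=1_{V(G_i)}$ is a direct computation from~(\ref{eq:vmg}). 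Hence each $M(G_i)$ represents an element of $\mathcal{M}$.

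For the first assertion, since the uniform bound $\|M(G_i)\|_{2\to 2}=1$ holds by Lemma~\ref{closedprop3}, and the compactness of $(\mathcal{M},d_M)$ from Theorem~\ref{markcomp} directly gives that $\{M(G_i)\}_{i=1}^\infty$ has a $d_M$-convergent subsequence whose limit lies in $\mathcal{M}$. For the second assertion, if the original sequence already converges, the same theorem identifies the limit object as a Markov graphop $A$, and by definition of convergence in $d_M$ we obtain $\lim_{i\to\infty}M(G_i)=A$.

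The only real work is therefore the first step: checking that $M(G)$ genuinely belongs to $\mathcal{M}$ for every non-empty $G$. This is essentially done in the preamble to the definition but is worth stating as a short lemma. The main obstacle, such as it is, is bookkeeping: $M(G)$ is not a symmetric matrix as an object on the uniform space $([n],\mu_{[n]})$, and one has to remember to work on the weighted probability space $([n],\nu_G)$ so that self-adjointness holds in the sense of Definition~\ref{defprops}. Once this is done, invoking Theorem~\ref{markcomp} requires no further input.
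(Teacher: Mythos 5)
Your proposal is correct and follows exactly the paper's intended route: the paper states this theorem as a direct consequence of Theorem~\ref{markcomp}, and you simply fill in the (routine) verification that each $M(G_i)$ is a Markov graphop before invoking compactness of $(\mathcal{M},d_M)$. The reversibility identity you use for self-adjointness on $L^2(V(G_i),\nu_{G_i})$ is accurate, and nothing further is needed.
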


Note that for regular graphs random walk convergence is equivalent to  adjacency operator convergence with scaling by $\|G\|_{2\to 2}$. However, if there is a small but non-zero number of very high degree points in $G$ and many low degree points, then adjacency operator convergence may trivialize. Examples for this are the star graphs or the 2-subdivisions of complete graphs. In these cases random walk convergence turns out to be more natural and leads to interesting and non-trivial limit objects (see Section \ref{chapex}).

\noindent{\bf Extended random walk convergence:} A Markov pair is a pair of a Markov graphop $A\in\mathcal{B}(\Omega)$ and a measurable function $f$ on $\Omega$. As we explained in the introduction, a sequence of Markov pairs $\{(A_i,f_i)\}_{i=1}^\infty$ is convergent if the extended $k$-profiles formed by distributions of the form
$$\mathcal{D}_{A_i,f}(\{v_j\}_{j=1}^k):=\mathcal{D}(\{v_j\}_{j=1}^k,\{v_jA_i\}_{j=1}^k,f)$$ converge in $d_H$ for every $k\in\mathbb{N}$. It can be proved with a slight extension of the proof of Theorem \ref{exlim} (details will be worked out elsewhere) that a convergent sequence of Markov pairs has a limit which is also a Markov pair. There are two different uses of Markov pairs. The first one is the following. In spite of the fact that the Markov kernel of a finite graph $M(G)$ determines the sequence of the non-zero degrees (even with multiplicities), this information may be lost in the limit. Even if it is preserved in some way (examples for this are sequences of bounded degree graphs), degrees can not be read off in the usual way from the limit object which is a Markov graphop. The idea is that we can store the information on the degrees in a normalized version $d^*$ of the degree function $d$. 

\begin{remarka} {\bf (Representing graphops by Markov pairs)} In general, every graphop $A\in\mathcal{B}(\Omega,\mathcal{A},\mu)$ can be naturally represented by a Markov pair in the following way. Let $\nu$ be the representing measure of $A$ given by Theorem \ref{measrepthm} and let $\nu'$ be the marginal measure of $\nu$ on $(\Omega,\mathcal{A})$. Let $M(A)$ denote the Markov graphop on $(\Omega,\mathcal{A},\nu')$ determined by the measure $\nu$ using again Theorem \ref{measrepthm}. Note that the action of $M(A)$ is given by $$(vM(A))(x)=(vA)(x)/(1_\Omega A)(x).$$ The representation of $A$ is given by the pair $(M(A),1_\Omega A)$.
\end{remarka}

Another interesting use of Markov pairs is that we can use them to represent generalized graphons (related to graphexes \cite{BCCH, janson}) that are symmetric non-negative measurable functions of the form $W:\mathbb{R}^+\times\mathbb{R}^+\to [0,1]$ with $\|W\|_1<\infty$. Let $\nu$ denote the probability measure on $\mathbb{R}^+\times\mathbb{R}^+$ defined by $\nu(S)=\|W\|_1^{-1}\int_S W~d\lambda^2$. Let $\nu'$ be the marginal distribution of $\nu$ on $\mathbb{R}^+$. Let $M$ denote the Markov graphop on $(\mathbb{R}^+,\nu')$ determined by Theorem \ref{measrepthm}. For $x\in\mathbb{R}^+$ let $f(x):=\int_{\mathbb{R}^+} W(x,y)~d\lambda$. We can represent the generalized graphon $W$ by the Markov pair $(M,f)$.

\medskip

\noindent{\bf Laplace operator convergence:}~Using the above notations we denote by $L(G)\in\mathcal{B}_2([n],\mu_{[n]})$ the $P$-operator defined by 
$$(vL(G))(i):=d(i)v(i)-\sum_{(j,i)\in E(G)} v(j).$$
We have that $L(G)$ is a positive self-adjoint operator. Note that in contrast with $A(G)$ and $M(G)$, the operator $L(G)$ is typically not positivity-preserving. On the other hand we gain positiveness. Similarly to the previous definitions we say that a graph sequence $\{G_i\}_{i=1}^\infty$ is Laplace operator convergent (or just convergent) with scaling $f$ if the sequence $\{L(G_i)/f(G_i)\}_{i=1}^\infty$ has uniformly bounded operator norm and is a convergent sequence of $P$-operators. Limit objects are positive, self-adjoint $P$-operators. 

\medskip

\noindent{\bf Degree weighted operator convergence:}~Finally we mention one more interesting $P$-operator related to $G$. Similarly to $M(G)$ we use the stationary distribution of the random walk. Let $F(G)\in\mathcal{B}([n],\nu_G)$ be defined by $$(vF(G))(i):=\sum_{(j,i)\in E(G)} v(j)d(j).$$ Again we have that $F(G)$ is a graphop. Indeed, positivity-preserving property is clear, and self-adjointness can be verified by a simple calculation. Limits of appropriately normalized versions of $F(G_i)$ are graphops. It may be interesting to mention that this concept resonates with the so-called PageRank algorithm \cite{pagerank} in the sense that the operator puts larger weight on the neighbors with higher degree when calculating the image of a vector at a given vertex.

\section{Measure representation of graphops}

Let $A\in\mathcal{B}(\Omega,\mathcal{A},\mu)$ be a graphop. In this section we construct a measure $\nu$ on $\Omega\times\Omega$ that represents the operator $A$. This means that the operator $A$ can be reconstructed from the measure $\nu$ in a natural way. Intuitively, if we think of $A$ as an infinite graph-like object, then $\nu$ shows where we can find the edges in $\Omega\times\Omega$. Note that both graphons and graphings are given in terms of such measures rather than in the form of operators. More precisely, graphons are given by a measurable function which is the Radon--Nykodim derivative of a measure on $[0,1]\times [0,1]$. Our goal in this chapter is to bring closer the operator language and the existing representations of graph limits. 

Assume that $(\Omega,\mathcal{A})$ is a standard Borel space. Let $\mathcal{R}$ denote the set of product sets of the form $S\times T\subseteq\Omega\times\Omega$ where $S,T\in\mathcal{A}$. We have that $\mathcal{R}$ is a so-called semiring. We define the function $\nu$ on $\mathcal{R}$ such that $\nu(S\times T):=(1_S,1_T)_A$ holds for $S,T\in\mathcal{A}$ (recall equation \eqref{skaleq}). 

\begin{lemma}\label{measreplem} The function $\nu$ has the following properties.
\begin{enumerate}
\item $\nu(S\times T)\geq 0$ for every $S,T\in\mathcal{A}$.
\item If $S\times T$ is the disjoint union of the finitely many product sets $\{S_i\times T_i\}_{i=1}^n$, then $\nu(S\times T)=\sum_{i=1}^n\nu(S_i\times T_i)$. 
\item For every $\varepsilon>0$ there is $\delta=\delta(\varepsilon)>0$ such that $\nu(S\times T)\leq \varepsilon$ whenever the minimum of $\mu(S)$ and $\mu(T)$ is at most $\delta$.
\end{enumerate}
\end{lemma}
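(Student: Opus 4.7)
Part (1) is immediate from the positivity-preserving property: since $1_S\ge 0$ we have $1_S A\ge 0$ almost everywhere, so $(1_S,1_T)_A=\int (1_S A)\,1_T\,d\mu\ge 0$.

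For (2), the plan is to exploit the bilinearity of $(f,g)_A$ in each argument. Given a disjoint decomposition $S\times T=\bigsqcup_{i=1}^n S_i\times T_i$, form the finite $\sigma$-algebra on $S$ generated by $S_1\cap S,\dots,S_n\cap S$, and similarly on $T$; their atoms yield partitions $S=\bigsqcup_j U_j$ and $T=\bigsqcup_k V_k$ refining each $S_i$ and each $T_i$. Because $\bigsqcup_i S_i\times T_i=S\times T$ is a disjoint cover, each atomic rectangle $U_j\times V_k$ lies in exactly one $S_i\times T_i$, so the collection $\{U_j\times V_k\}$ is a common refinement of both decompositions. Using $1_S=\sum_j 1_{U_j}$ and $1_T=\sum_k 1_{V_k}$, bilinearity gives $\nu(S\times T)=\sum_{j,k}\nu(U_j\times V_k)$, and the same argument applied to each piece gives $\nu(S_i\times T_i)=\sum_{U_j\times V_k\subseteq S_i\times T_i}\nu(U_j\times V_k)$. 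Summing over $i$ recovers $\nu(S\times T)$.

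For (3), which I expect to be the main obstacle, the plan is to bound $\nu(S\times T)$ by the integral of a single fixed $L^1$ function and then invoke absolute continuity of the Lebesgue integral. Positivity-preserving yields the pointwise monotonicity $0\le 1_S A\le 1_\Omega A$: indeed, writing $1_\Omega=1_S+1_{\Omega\setminus S}$ and applying $A$, both summands are a.e.\ non-negative, so their sum $1_\Omega A=:f$ dominates $1_S A$ almost everywhere. Consequently
\[
\nu(S\times T)=\int 1_T\,(1_S A)\,d\mu\le \int_T f\,d\mu.
\]
Since $\|f\|_1=\|1_\Omega A\|_1\le \|A\|_{\infty\to 1}<\infty$, the standard absolute continuity of the Lebesgue integral provides, for every $\varepsilon>0$, some $\delta>0$ such that $\mu(T)\le\delta$ implies $\int_T f\,d\mu\le\varepsilon$. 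To cover the case $\mu(S)\le\delta$ I invoke self-adjointness: $\nu(S\times T)=(1_T,1_S)_A=\int 1_S(1_T A)\,d\mu\le \int_S f\,d\mu$, and the same absolute continuity estimate applies with $S$ in place of $T$.

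The only non-routine ingredient is (3), and its crux is the pointwise sandwich $0\le 1_S A\le 1_\Omega A$. It is precisely the interaction of the two defining properties of a graphop — positivity-preserving (for the sandwich) and self-adjointness (to pass symmetrically between the cases $\mu(S)$ small and $\mu(T)$ small) — that makes the argument go through; neither property alone would suffice.
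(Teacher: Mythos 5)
Your proof is correct and follows essentially the same route as the paper: part (1) from positivity-preserving, part (2) from bilinearity via a common refinement of the two partitions, and part (3) by first reducing (via self-adjointness) to the case $\mu(T)\leq\delta$, then bounding $\nu(S\times T)\leq\int_T(1_\Omega A)\,d\mu$ and invoking absolute continuity of the integral. The only cosmetic difference is that you derive $\nu(S\times T)\leq\int_T f\,d\mu$ from the pointwise domination $1_SA\leq 1_\Omega A$, while the paper gets the same inequality by adding the non-negative term $\nu((\Omega\setminus S)\times T)$ and using additivity; these are two phrasings of the same use of positivity-preserving.
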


\begin{proof} By the positivity-preserving property of $A$ and the bilinearity of $(.,.)_A$ we have that $\nu$ satisfies the first two properties. To show the last property observe that by the self-adjoint property $A$ we have that $\nu(S\times T)=\nu(T\times S)$ and so the statement is equivalent to showing the existence of $\delta>0$ such that  $\nu(S\times T)\leq\varepsilon$ whenever $\nu(T)\leq\delta$. We have by the first two properties that
 $$\nu(S\times T)\leq \nu(S\times T)+\nu((\Omega\setminus S)\times T)=\nu(\Omega\times T)=\int_T f~d\mu,$$ where $f:=1_\Omega A$. Now, since $f\geq 0$ and $\int f~d\mu<\infty$, the statement of the lemma follows from the well know absolute continuity property of integration.
\end{proof}

The proof of the following lemma follows a similar scheme as \cite[Lemma A.10]{pablo} or \cite[Theorem 454D]{fremlin}. 

\begin{lemma}\label{measreplem2} The function $\nu$ is a premeasure on $\mathcal{S}$ and it has a unique extension to a Borel measure on $(\Omega\times\Omega,\mathcal{A}\otimes\mathcal{A})$, denoted also by $\nu$.
\end{lemma}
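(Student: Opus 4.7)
The plan is to apply the Hahn--Kolmogorov extension theorem. Since $\nu$ is finite ($\nu(\Omega\times\Omega) = (1_\Omega,1_\Omega)_A \leq \|A\|_{\infty\to 1}$), an extension to $\sigma(\mathcal{R})=\mathcal{A}\otimes\mathcal{A}$ exists and is automatically unique once $\nu$ is shown to be countably additive on $\mathcal{R}$. Lemma~\ref{measreplem}(1),(2) already provides non-negativity and finite additivity, so only $\sigma$-additivity requires work.

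The key tool is a \emph{disjoint-sum identity} for the graphop $A$: if $S = \bigsqcup_{m=1}^\infty S_m'$ with $S_m'\in\mathcal{A}$, then
\[
A(1_S) = \sum_{m=1}^\infty A(1_{S_m'})\quad\text{in }L^1(\Omega),\text{ hence }\mu\text{-a.e.}
\]
Indeed, setting $g_N := \sum_{m=1}^N 1_{S_m'}\uparrow 1_S$, linearity gives $A(g_N) = \sum_{m=1}^N A(1_{S_m'})$, positivity-preservation yields $A(1_S - g_N)\geq 0$, and
\[
\|A(1_S - g_N)\|_1 = (1_S - g_N,1_\Omega)_A = (1_\Omega,1_S - g_N)_A = \int (A 1_\Omega)(1_S - g_N)\,d\mu \to 0
\]
by self-adjointness combined with dominated convergence (using $A1_\Omega\in L^1(\mu)$ and $1_S - g_N \downarrow 0$ pointwise).

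Now suppose $S\times T = \bigsqcup_{n=1}^\infty S_n\times T_n$, with $S_n\subseteq S$ and $T_n\subseteq T$ (without loss of generality). For each $y\in\Omega$ set $J(y) := \{n : y\in T_n\}\in 2^\mathbb{N}$; slicing the disjoint union at any $y\in T$ yields $S = \bigsqcup_{n\in J(y)} S_n$. The measurable level sets $\Omega_J := \{y : J(y)=J\}$ are pairwise disjoint, so finiteness of $\mu$ forces $\mathcal{J}:=\{J : \mu(\Omega_J) > 0\}$ to be countable. For each $J\in\mathcal{J}$ with $\Omega_J\cap T\neq\emptyset$, the partition $S = \bigsqcup_{n\in J} S_n$ is genuine and the identity above gives $A(1_S) = \sum_{n\in J} A(1_{S_n})$ off some $\mu$-null set $N_J$; for other $J\in\mathcal{J}$ put $N_J := \emptyset$. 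Then $N := \bigcup_{J\in\mathcal{J}} N_J \cup \bigcup_{J\notin\mathcal{J}}\Omega_J$ is $\mu$-null, and for every $y\notin N$ one verifies
\[
1_T(y)(A 1_S)(y) = \sum_{n=1}^\infty 1_{T_n}(y)(A 1_{S_n})(y),
\]
since for $y\notin T$ both sides vanish (as $T_n\subseteq T$ forces $1_{T_n}(y)=0$), while for $y\in T$ the point $y$ lies in some $\Omega_J$ with $J\in\mathcal{J}$, $\Omega_J\cap T\neq\emptyset$, and $y\notin N_J$, so $(A 1_S)(y) = \sum_{n\in J(y)}(A 1_{S_n})(y)$. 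Integrating against $\mu$ and applying monotone convergence to the non-negative series on the right,
\[
\nu(S\times T) = \int 1_T(A 1_S)\,d\mu = \sum_n\int 1_{T_n}(A 1_{S_n})\,d\mu = \sum_n \nu(S_n\times T_n),
\]
which is the required countable additivity.

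The main anticipated difficulty is that the partition $S = \bigsqcup_{n\in J(y)}S_n$ varies with $y$, so naively the disjoint-sum identity would need to be invoked for uncountably many index sets $J$, producing uncountably many exceptional null sets; the finiteness of $\mu$, which restricts $\mathcal{J}$ to be countable, is the crucial ingredient circumventing this. With $\sigma$-additivity on $\mathcal{R}$ in hand, the Hahn--Kolmogorov theorem supplies the unique extension of $\nu$ to a finite Borel measure on $\mathcal{A}\otimes\mathcal{A}$.
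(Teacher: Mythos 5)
Your disjoint-sum identity for the graphop is correct and is a genuinely nice observation: positivity preservation gives $(1_S-g_N)A\geq 0$, and self-adjointness together with dominated convergence gives $\|(1_S-g_N)A\|_1=\int(1_\Omega A)(1_S-g_N)\,d\mu\to 0$, so $(1_S)A=\sum_m(1_{S'_m})A$ in $L^1$. The gap lies in the reduction step, specifically in the assertion that $N:=\bigcup_{J\in\mathcal J}N_J\cup\bigcup_{J\notin\mathcal J}\Omega_J$ is $\mu$-null. The first union is a countable union of null sets, hence null. But the second union $\bigcup_{J\notin\mathcal J}\Omega_J$, while measurable (it is the complement of the countable union $\bigcup_{J\in\mathcal J}\Omega_J$), need not be null. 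Finiteness of $\mu$ does tell you that $\mathcal J=\{J:\mu(\Omega_J)>0\}$ is countable, i.e.\ that the pushforward of $\mu$ under $y\mapsto J(y)$ has only countably many atoms; it does not preclude a diffuse part, which would make $\bigcup_{J\notin\mathcal J}\Omega_J$ have positive (even full) measure.

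A concrete counterexample to the null-set claim: let $\Omega=\{0,1\}^{\mathbb N}$ with the uniform product measure $\mu$, take $S=T=\Omega$, and consider the countable rectangle partition
\[
\Omega\times\Omega=\bigl(\{0^\infty\}\times\Omega\bigr)\,\sqcup\,\bigsqcup_{n\geq 1,\;b\in\{0,1\}}\Bigl(\{x:x_1=\cdots=x_{n-1}=0,\ x_n=1\}\times\{y:y_n=b\}\Bigr).
\]
For each $y$, $J(y)$ records the entire binary sequence $(y_n)_n$, so $y\mapsto J(y)$ is injective and every level set $\Omega_J$ is a singleton, hence $\mu$-null. Thus $\mathcal J=\emptyset$ and $\bigcup_{J\notin\mathcal J}\Omega_J=\Omega$ has full measure, so your argument produces the pointwise identity for \emph{no} $y$ at all. (For this partition the identity is in fact true for every $y$, e.g.\ when $A$ is the constant $1$ graphon; the argument, not the conclusion, is what breaks.) The underlying obstruction is that your $L^1$ identity for a fixed index set $J$ only controls a single $J$ off a null set $N_J$ depending on $J$, and you need it simultaneously for the uncountable family $\{J(y):y\in T\}$; the graph $\{(J(y),y)\}$ is a null set for the product of $\mu$ with the pushforward, so no Fubini-type bookkeeping rescues the union. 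Notably, your argument never uses the hypothesis that $(\Omega,\mathcal A)$ is standard Borel, which is another signal that something is missing.

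The paper escapes exactly this difficulty by using the standard Borel structure: it fixes a Polish topology generating $\mathcal A$ under which $\mu$ is inner regular and all $S_i,T_i$ are open, approximates $\Omega\times\Omega$ from inside by a compact rectangle $K_1\times K_2$ with $\nu(K_1\times K_2)\geq\nu(\Omega\times\Omega)-2\varepsilon$ (via Lemma~\ref{measreplem}(3)), extracts a finite subcover of $K_1\times K_2$ from the open cover $\{S_i\times T_i\}$, and concludes by finite additivity. Your disjoint-sum identity might still be a useful ingredient in an alternative proof, but some compactness or regularity input seems unavoidable to patch the uncountable-null-set issue.
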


\begin{proof}  First we claim that if $\varepsilon>0$ and $\delta>0$ satisfy the third property in lemma \ref{measreplem} and $\mu(S),\mu(T)\geq 1-\delta$, then 
\begin{equation}\label{measrepeq1}
\nu(S\times T) \geq \nu(\Omega\times\Omega)-2\varepsilon.
\end{equation}
Indeed, by Lemma \ref{measreplem} we have
$$\nu(S\times T)=\nu(\Omega\times\Omega)-\nu((\Omega\setminus S)\times T)-\nu(S\times(\Omega\setminus T))+\nu((\Omega\setminus S)\times(\Omega\setminus T))\geq$$
$$\geq \nu(\Omega\times \Omega)-\nu((\Omega\setminus S)\times\Omega)-\nu(\Omega\times(\Omega\setminus T))\geq \nu(\Omega\times\Omega)-2\varepsilon.$$

To prove that $\nu$ is a premeasure the only non-trivial part is to show that if $R\in\mathcal{R}$ is the pairwise disjoint union of sets $\{R_i=S_i\times T_i\}_{i=1}^\infty$ in $\mathcal{R}$, then $\nu(R)\leq\sum_{i=1}^\infty\nu(R_i)$. Note that it suffices to prove it for $R=\Omega\times\Omega$, since given any other product set $R$ we can obtain $\Omega\times\Omega$ as the disjoint union of $R$ and a finite number of other product sets, in such a way that the claim for $\Omega\times\Omega$ implies the claim for $R$. Now since $\Omega$ is standard, there exists a topology $\tau$ on $\Omega$ generating $\mathcal{A}$ such that 1) we can approximate every set in $\mathcal{A}$ by a compact set with arbitrary precision measured in $\mu$; 2) both $S_i$ and $T_i$ are open for every $i\in\mathbb{N}$. Let $\varepsilon>0$ arbitrary and let $K_1,K_2\in\mathcal{A}$ be $\tau$- compact sets such that $\mu(K_1),\mu(K_2)\geq 1-\delta(\varepsilon)$. We have by equation (\ref{measrepeq1}) that $\nu(K_1\times K_2)\geq \nu(\Omega\times\Omega)-2\varepsilon$. Since every $S_i$ and $T_i$ is open, we have that $\{S_i\times T_i\}_{i=1}^\infty$ is an open cover of the compact set $K_1\times K_2$, and so there is a finite sub-cover. Applying the second property of $\nu$ to this finite sub-cover we obtain that 
$$\nu(\Omega\times\Omega)\leq \nu(K_1\times K_2)+2\varepsilon\leq 2\varepsilon+\sum_{i=1}^\infty \nu(R_i).$$ Since this is true for every $\varepsilon>0$, we obtain that $\nu$ is a premeasure. The Carath\'eodory extension theorem implies that there is a unique extension of $\nu$ to $\mathcal{A}\otimes\mathcal{A}$.
\end{proof}

\begin{theorem}\label{measrepthm} {\bf (Measure representation of graphops)} If $A\in\mathcal{B}(\Omega,\mathcal{A},\mu)$ is a graphop, then there is a unique finite measure $\nu$ on $(\Omega\times\Omega,\mathcal{A}\otimes\mathcal{A})$ with the following properties.
\begin{enumerate} 
\item $\nu$ is symmetric, i.e. $\nu(S\times T)=\nu(T\times S)$ holds for every $S,T\in\mathcal{A}$.
\item The marginal distribution of $\nu$ on $\Omega$ is absolutely continuous with respect to $\mu$. 
\item 
$(f,g)_A=\int_{(x,y)\in\Omega^2}f(x)g(y)~d\nu$ holds for every $f,g\in L^\infty(\Omega)$. 
\end{enumerate}
Conversely, if $\nu$ is a finite measure on $(\Omega\times\Omega,\mathcal{A}\otimes\mathcal{A})$ satisfying the first two properties, then there is a unique graphop $A$ such that the third property is satisfied. 
\end{theorem}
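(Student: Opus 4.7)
The plan is to combine Lemmas \ref{measreplem} and \ref{measreplem2}, which already furnish a finite Borel measure $\nu$ on $(\Omega^2,\mathcal{A}\otimes\mathcal{A})$ satisfying $\nu(S\times T)=(1_S,1_T)_A$ on the $\pi$-system $\mathcal{R}$ of product sets. For the three listed properties, symmetry of $\nu$ on product sets is immediate from self-adjointness of $A$; since $\mathcal{R}$ generates $\mathcal{A}\otimes\mathcal{A}$, symmetry of the full extension follows from a standard monotone class argument (the pushforward of $\nu$ under the swap agrees with $\nu$ on $\mathcal{R}$ and hence everywhere). Absolute continuity of the marginal is essentially property (3) of Lemma \ref{measreplem}: any $\mu$-null $T$ satisfies $\nu(\Omega\times T)\leq\varepsilon$ for every $\varepsilon>0$. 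The integral identity holds by construction for indicators $f=1_S$, $g=1_T$; extending by bilinearity to simple functions and then to general $f,g\in L^\infty(\Omega)$ by uniform approximation (using $|(f,g)_A|\leq\|f\|_\infty\|g\|_\infty\|A\|_{\infty\to 1}$ and the finiteness of $\nu$) finishes this direction. Uniqueness of $\nu$ then comes from the $\pi$-system agreement again.

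For the converse, I would reconstruct $A$ from $\nu$ via Radon--Nikodym. Given $\nu$ satisfying (1) and (2), for each $v\in L^\infty(\Omega)$ the set function
\[\lambda_v(T):=\int_{\Omega\times\Omega} v(x)\,1_T(y)\,d\nu\]
is a finite signed measure on $\mathcal{A}$ with $|\lambda_v(T)|\leq \|v\|_\infty\,\nu(\Omega\times T)$, and since the second marginal of $\nu$ is absolutely continuous with respect to $\mu$, so is $\lambda_v$. Radon--Nikodym then produces a unique $vA\in L^1(\mu)$ with $\lambda_v(T)=\int_T(vA)\,d\mu$; linearity in $v$ is clear, and $\|vA\|_1\leq \|v\|_\infty\,\nu(\Omega^2)$ shows $A\in\mathcal{B}(\Omega)$. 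The resulting $A$ is positivity-preserving because $v\geq 0$ forces $\lambda_v$ to be a non-negative measure and hence $vA\geq 0$ almost everywhere, and self-adjoint because the symmetry of $\nu$ gives $(v,g)_A=\int v(x)g(y)\,d\nu=\int g(x)v(y)\,d\nu=(g,v)_A$ for $v,g\in L^\infty(\Omega)$. Uniqueness of $A$ is immediate since property (3) pins down $\int_T(vA)\,d\mu$ for every $T\in\mathcal{A}$.

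I expect the most technical step to be the extension of the integral identity from product indicators (where it is the definition of $\nu$) to general $f,g\in L^\infty$. The bootstrap proceeds in the usual way: bilinearity to handle simple functions, then uniform approximation; continuity of both sides along such approximation rests on $\|A\|_{\infty\to 1}<\infty$ on the operator side and on the finiteness of $\nu$ on the measure side, and a single appeal to dominated convergence handles the passage to the limit. The converse direction is more delicate in that it crucially exploits $\nu'\ll\mu$ to place $vA$ in $L^1(\mu)$ rather than merely $L^1(\nu')$; without that hypothesis one would only obtain an operator valued in a larger space. Everything else in both directions reduces to routine measure-theoretic checks.
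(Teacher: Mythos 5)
Your proposal is correct and follows essentially the same route as the paper: Lemmas \ref{measreplem} and \ref{measreplem2} furnish $\nu$, the properties are verified on product sets and extended, and the converse reconstructs $vA$ as a Radon--Nikodym derivative. In fact your converse is slightly more careful than the paper's terse appeal to ``duality'' --- the dual of $L^\infty(\Omega)$ is not $L^1(\Omega)$, so one really must invoke the absolute continuity hypothesis on the marginal (together with countable additivity of $T\mapsto\int f(x)1_T(y)\,d\nu$) before applying Radon--Nikodym to land in $L^1(\mu)$, exactly as you do.
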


\begin{proof} The existence and uniqueness of $\nu$ follows from Lemma \ref{measreplem2}. For the converse statement let $f\in L^\infty(\Omega)$ be an arbitrary function. Let $F_f$ denote the   $L^\infty(\Omega)\to\mathbb{R}$ functional defined by $F_f(g):=\int_{(x,y)\in\Omega^2}f(x)g(y)~d\nu$. We have that $|F_f(g)|\leq \|f\|_\infty\|g\|_\infty\nu(\Omega^2)$, and thus by duality there is a unique function $m(f)\in L^1(\Omega)$ such that $F_f(g)=\int m(f)(x)g(x)~d\mu$. It is clear from the definition that $A:f\mapsto m(f)$ is a self-adjoint, positivity-preserving linear operator with $\|A\|_{\infty\to 1}=\nu(\Omega^2)$ satisfying the third property.
\end{proof}

\begin{remarka} {\bf (Fiber measures)} A natural way of reconstructing $A$ from the representing measure $\nu$ goes by disintegrating the measure $\nu$. By using the disintegration theorem one obtains a family of measures $\{\nu_x\}_{x\in\Omega}$ on $(\Omega,\mathcal{A})$ (called fiber measures) such that $$(fA)(x)=\int_\Omega f~d\nu_x.$$ In general it is very convenient to describe a graphop in terms of fiber measures. This is illustrated on Figure \ref{sphericgraph}.
\end{remarka}

\begin{remarka} {\bf (Markov graphops as couplings)} Markov graphops are special graphops such that $1_\Omega A=1_\Omega$. It follows that the marginal distribution of $\nu$ on $\Omega$ is equal to $\mu$. This means that Markov graphops are completely specified by the data $(\Omega,\mathcal{A},\nu)$, where $\nu$ is a symmetric probability measure on $(\Omega\times\Omega,\mathcal{A}\otimes\mathcal{A})$. Such objects are symmetric self-couplings of probability spaces.
\end{remarka}

\section{Quotient convergence and partitions}

In the first part of this section we relate $P$-operator convergence to the so-called quotient convergence, which  was studied  in different forms by different authors \cite{BCCZ2, BR, kunszenti2}. The version that we generalize to $P$-operators was defined in \cite{kunszenti2}. In the second part of the chapter we describe a variant of action convergence that turns out to be equivalent to the original version for uniformly $(p,q)$-bounded sequences. 

\begin{definition} A function partition of $(\Omega,\mathcal{A},\mu)$ is a set $\{v_i\}_{i=1}^k$ of $0-1$ valued measurable functions on $\Omega$ such that $\sum_{i=1}^k v_i=1_\Omega$. 
A fractional function partition is a set $\{v_i\}_{i=1}^k$ of functions in $L^\infty_{[0,1]}(\Omega)$ such that $\sum_{i=1}^k v_i=1_\Omega$. We say that $\{v_i\}_{i=1}^k$ is balanced if $\|v_i\|_1=1/k$ holds for every $i\in [k]$.
\end{definition}

\begin{definition} {\bf (Quotients of $P$-operators)} Let $k\in\mathbb{N}$ and $A\in\mathcal{B}(\Omega)$. A balanced fractional $k\times k$ quotient of $A$ is a matrix $M\in\mathbb{R}^{k\times k}$ such that there is a balanced fractional function partition $\{v_i\}_{i=1}^k$ of $\Omega$ with $M_{i,j}=(v_i,v_j)_A$ for every $i,j\in [k]$. Let $\mathcal{Q}_{k}(A)$ denote the set of all balanced fractional quotients of $A$.
\end{definition}

Note that by linearity, the entry sum of any matrix $M\in\mathcal{Q}_k(A)$ is equal to $(1_\Omega,1_\Omega)_A$ for every $k\in\mathbb{N}$.
For two matrices $A,B\in \mathbb{R}^{[k]\times [k]}$ let $d_1(A,B):=\sum_{i,j}|A_{i,j}-B_{i,j}|$ denote the entry-wise $l_1$ distance. For two subsets $S_1,S_2\subseteq \mathbb{R}^{[k]\times [k]}$ let $d_{1,H}$ denote the corresponding Hausdorff distance. 

\begin{definition} {\bf (Quotient convergence and metric)} A sequence of $P$-operators $\{A_i\}_{i=1}^\infty$ is quotient convergent if for every $k$ we have that $\mathcal{Q}_k(A_i)$ is convergent in $d_{1,H}$. 
\end{definition}

The following proposition says that $P$-operator convergence is stronger than quotient convergence if the sequence has uniformly bounded $\|.\|_{p\to q}$ norm for some $p\in [1,\infty),q\in (1,\infty]$.

\begin{lemma} Let us fix $c\geq 1$ in $\mathbb{R}^+$ and numbers $p\in [1,\infty),q\in (1,\infty]$. For every $k\in\mathbb{N}$ and $\varepsilon>0$ there exists $\delta$ such that whenever two $P$-operators $A\in\mathcal{B}(\Omega_1),B\in\mathcal{B}(\Omega_2)$ with $\|A\|_{p\to q},\|B\|_{p\to q}\leq c$ satisfy $d_M(A,B)\leq\delta$, then $d_{1,H}(\mathcal{Q}_k(A),\mathcal{Q}_k(B))\leq\varepsilon$. 
\end{lemma}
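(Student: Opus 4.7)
The plan is to transfer balanced fractional function partitions of $\Omega_1$ to approximate balanced fractional function partitions of $\Omega_2$ using the Hausdorff closeness of the profiles, and then control the entry-wise change in the resulting quotient matrices. Given $M\in\mathcal{Q}_k(A)$ realised by a balanced fractional function partition $\{v_i\}_{i=1}^k$, I write $M_{i,j}=(v_i,v_j)_A=\int x_j y_i\,d\mathcal{D}_A(v_1,\dots,v_k)$ as an integral against the profile measure; since $d_M(A,B)\leq\delta$ gives $d_H(\mathcal{S}_k(A),\mathcal{S}_k(B))\leq 2^k\delta=:\alpha$, there exist $w_1,\dots,w_k\in L^\infty_{[-1,1]}(\Omega_2)$ with $d_{\rm LP}\bigl(\mathcal{D}_A(v_1,\dots,v_k),\mathcal{D}_B(w_1,\dots,w_k)\bigr)\leq\alpha$.

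First I would bound $|M_{i,j}-(w_i,w_j)_B|$ by a truncation argument: replace $y_i$ by $\phi_R(y_i):=\max(-R,\min(R,y_i))$ and apply the inequality bounding the difference of integrals of a bounded Lipschitz function $f$ by $2(\|f\|_\infty+\mathrm{Lip}(f))\,d_{\rm LP}(\mu,\mu')$ to the integrand $(x_j,y_i)\mapsto x_j\phi_R(y_i)$ (whose sup norm and Lipschitz constant are both $O(R)$ for $R\geq 1$). The $(p,q)$-boundedness gives $\|v_iA\|_q,\|w_iB\|_q\leq c$, so Markov's inequality controls the tail $\int|y_i|\mathbf{1}_{|y_i|>R}\leq c^q R^{1-q}$ for $q<\infty$, and the tail vanishes for $q=\infty$ once $R\geq c$. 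Optimising $R$ as a function of $\alpha$ yields $|M_{i,j}-(w_i,w_j)_B|=O(\alpha^{1-1/q})$ with constants depending only on $c$.

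The main obstacle, and the second step, is converting $\{w_i\}$ into a genuine balanced fractional function partition $\{w''_i\}$ without losing the previous estimate. The key observation is that the marginal of $\mathcal{D}_A(v_1,\dots,v_k)$ on the first $k$ coordinates is supported on the simplex $\Delta_k=\{t\in[0,1]^k:\sum_i t_i=1\}$, so the defining inequality of $d_{\rm LP}$ forces $(w_1(x),\dots,w_k(x))$ to lie within Euclidean distance $\alpha$ of $\Delta_k$ off a set of measure at most $\alpha$. Let $w'_i(x)$ be the $i$-th coordinate of the Euclidean projection of $(w_1(x),\dots,w_k(x))$ onto $\Delta_k$; splitting $\Omega_2$ into the ``good'' and ``bad'' parts gives $\|w_i-w'_i\|_p=O(\alpha^{1/p}\sqrt{k})$ and, using boundedness of all entries, a small $\|w_i-w'_i\|_1$ bound as well. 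The marginals $\mathbb{E}(w'_i)$ differ from $1/k$ by $O(\alpha\sqrt{k})$ and sum to zero, so I enforce exact balance by swapping the values of $w'_i$ and $w'_j$ on small measurable sets $T_{ij}\subseteq\Omega_2$ chosen to shift the means by the required amount; the swap preserves both $w''_i\in[0,1]$ and $\sum_i w''_i=1_{\Omega_2}$ and perturbs each function by $O(k^{3/2}\alpha)$ in $L^1$.

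Finally I would combine Hölder's inequality with the conjugate exponent $q'=q/(q-1)<\infty$ (available because $q>1$) and $(p,q)$-boundedness to obtain
\[|(w_i,w_j)_B-(w''_i,w''_j)_B|\leq c\bigl(\|w_i-w''_i\|_p+\|w_j-w''_j\|_{q'}\bigr),\]
interpolating the $L^{q'}$-norm between $\|\cdot\|_\infty\leq 2$ and the small $\|\cdot\|_1$ bound to obtain a polynomial power of $\alpha$. Together with the truncation step this makes $|M_{i,j}-(w''_i,w''_j)_B|\to 0$ as $\delta\to 0$ at a rate depending only on $k,c,p,q$. Since $M'':=((w''_i,w''_j)_B)_{i,j}\in\mathcal{Q}_k(B)$, choosing $\delta$ small enough forces $d_1(M,M'')\leq\varepsilon$, and the symmetric argument with $A$ and $B$ swapped closes the Hausdorff estimate. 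The hardest part will be the combined simplex projection and mean rebalancing: the modification must simultaneously restore the partition identity and the marginal means while being small in both $L^p$ (to apply $(p,q)$-boundedness on one side of Hölder) and $L^{q'}$ (to pair against $w''_j$ on the other).
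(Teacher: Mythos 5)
Your proposal is correct and follows the same overall route as the paper's proof: start from a quotient matrix realised by a balanced fractional function partition, use the closeness of $k$-profiles to obtain matching functions on the other space, replace them by a genuine balanced fractional function partition, and control the resulting change in the bilinear form $(\cdot,\cdot)$ via Hölder and the uniform $(p,q)$-bound, combined with a truncation/tightness argument (the paper invokes its Lemma~\ref{closedlem2}, you prove an equivalent quantitative statement directly).

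The one place where you genuinely add something is the step the paper dispatches with ``It is easy to see that \dots there is a balanced fractional function partition $\{w_i'\}$ with $\|w_i-w_i'\|_{p'}\le\varepsilon_2$.'' Your plan (simplex support of the first-$k$ marginal forces $(w_1(x),\dots,w_k(x))$ to lie near $\Delta_k$ off a small set, project pointwise onto $\Delta_k$, then rebalance means by pairwise value swaps inside $\{w_i'>w_j'\}$) is a concrete and correct implementation of this and is worth writing out, since this is the only nontrivial part of the lemma. A small technical difference: the paper picks a conjugate pair $p'>p$, $q'<q$ with $1/p'+1/q'=1$ and measures the perturbation in $\|\cdot\|_{p'}$, exploiting $\|A\|_{p'\to q'}\le\|A\|_{p\to q}$; you instead pair $L^p$ against the conjugate $q'=q/(q-1)$ of $q$ and recover the needed $L^{q'}$ control by interpolating between your $L^1$ and $L^\infty$ estimates. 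Both are fine; yours avoids choosing an auxiliary exponent but requires tracking two norms of the perturbation. Make sure in the write-up that the bad set (where the tuple is far from $\Delta_k$) is handled in the $L^p$ bound with the correct $O(\sqrt{k})$ Euclidean diameter, and that in the swap step you choose $T_{ij}\subseteq\{w_i'>w_j'\}$ so the $L^1$ perturbation equals the mean shift — you sketch both points correctly.
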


\begin{proof} Depending on $p$ and $q$ let us choose $p'\in (p,\infty)$ and $q'\in (1,q)$ with $1/p'+1/q'=1$. Let $M\in\mathcal{Q}_k(B)$. We need to show that if $A$ and $B$ are sufficiently close in $d_M$, then there is $M'\in\mathcal{Q}_k(A)$ with $d_1(M,M')\leq\varepsilon$. The other direction (when $M\in\mathcal{Q}_k(A)$) follows by the symmetry of the argument.

Let $v_1,v_2,\dots,v_k$ be a balanced fractional function partition of $\Omega_2$ such that the corresponding quotient of $B$ is $M$. We have by the definition of $d_M$ that there are vectors $\{w_i\}_{i=1}^k$ in $L^\infty_{[-1,1]}(\Omega_1)$ such that $$d_{\rm LP}(\mathcal{D}_B(\{v_i\}_{i=1}^k)),\mathcal{D}_A(\{w_i\}_{i=1}^k))\leq 2^{k+1}d_M(A,B).$$ It is easy to see that depending on $k$ and an arbitrary constant $\varepsilon_2>0$ if $d_M(A,B)$ is small enough than there is a balanced fractional function partition $\{w'_i\}_{i=1}^k$ on $\Omega_1$ such that $\|w_i-w'_i\|_{p'}\leq\varepsilon_2$ holds for every $i\in [k]$. For such a function system we have for every $i,j\in [k]$ that 
$$|(w_i,w_j)_A-(w_i',w'_j)_A|\leq |(w_i,w_j)_A-(w_i',w_j)_A|+|(w_i',w_j)_A-(w_i',w_j')_A|=$$ $$=|(w_i-w_i',w_j)_A|+|(w_i',w_j-w_j')_A|\leq 2\varepsilon_2c,$$ where the last inequality is by (\ref{skalineq}). Let $M'\in\mathcal{Q}_k(A)$ be the quotient matrix of $A$ corresponding to $\{w_i'\}_{i=1}^k$ and let $M''$ be defined by $M''_{i,j}:=(w_i,w_j)_A$. We have that $d_1(M',M'')\leq 2\varepsilon_2k^2c\leq\varepsilon/2$ for appropriate $\varepsilon_2$. It remains to show that $|(v_i,v_j)_A-(w_i,w_j)_A|$ is small for every $i,j\in [k]$ if $d_M(A,B)$ is sufficiently small. This follows from Lemma \ref{closedlem2}.
\end{proof}

The following proposition is a direct consequence of the previous lemma.

\begin{proposition}\label{quprop}{\bf (Action convergence $\Rightarrow$ quotient convergence)} Let $p\in [1,\infty),q\in (1,\infty]$ and let $\{A_i\}_{i=1}^\infty$ be an action convergent sequence of  $P$-operators with uniformly bounded $\|.\|_{p\to q}$ norms. Then $\{A_i\}_{i=1}^\infty$ is quotient convergent. 
\end{proposition}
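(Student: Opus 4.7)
The plan is to derive this proposition as an immediate consequence of the preceding lemma, which gives the quantitative continuity statement $d_M(A,B) \leq \delta \Rightarrow d_{1,H}(\mathcal{Q}_k(A),\mathcal{Q}_k(B)) \leq \varepsilon$ (for uniformly $(p,q)$-bounded operators with the chosen $c$). The role of the proposition is merely to transfer Cauchy-ness in $d_M$ into Cauchy-ness of the quotient sets in $d_{1,H}$, one $k$ at a time.

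First I would fix an arbitrary $k \in \mathbb{N}$ and an arbitrary $\varepsilon > 0$. Let $c$ be a common bound for $\|A_i\|_{p\to q}$ (which we may assume $\geq 1$, enlarging if necessary, so the hypothesis of the lemma applies). The preceding lemma then provides $\delta = \delta(k,\varepsilon,c,p,q) > 0$ such that for any two $P$-operators $A, B$ with $\|A\|_{p\to q}, \|B\|_{p\to q} \leq c$ and $d_M(A,B) \leq \delta$, we have $d_{1,H}(\mathcal{Q}_k(A),\mathcal{Q}_k(B)) \leq \varepsilon$.

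Next, I would use that $\{A_i\}$ is action convergent, i.e.\ Cauchy in $d_M$: there exists $N = N(\delta)$ such that $d_M(A_i,A_j) \leq \delta$ whenever $i,j \geq N$. Applying the lemma to each such pair yields $d_{1,H}(\mathcal{Q}_k(A_i),\mathcal{Q}_k(A_j)) \leq \varepsilon$ for all $i,j \geq N$. Since $k$ and $\varepsilon$ were arbitrary, the sequence $\{\mathcal{Q}_k(A_i)\}_{i=1}^\infty$ is Cauchy in $d_{1,H}$ for every $k$, which is precisely the definition of quotient convergence.

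There is essentially no obstacle here; the entire content sits in the previous lemma, and the present proposition is just its reformulation for Cauchy sequences. If anything subtle arises, it would be ensuring that one can indeed invoke the lemma uniformly in $i,j$, but this is automatic since the bound $c$ on $\|A_i\|_{p\to q}$ is uniform in $i$ by hypothesis.
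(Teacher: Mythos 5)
Your proof is correct and takes exactly the route the paper intends: the paper simply states that the proposition is a direct consequence of the preceding lemma, and your argument spells out that deduction (fix $k$ and $\varepsilon$, invoke the lemma's uniform $\delta$, use the Cauchy property of $\{A_i\}$ in $d_M$).
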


In the rest of this section we will formulate a version of action convergence. It is clear that $\{v_i\}_{i=1}^k$ is a function partition if and only if there is measurable partition $\mathcal{P}=\{P_1,P_2,\dots,P_k\}$ of $\Omega$ such that $v_i=1_{P_i}$. Let $M_k$ denote the set of probability measures $\mu$ on $\mathbb{R}^{2k}$ such that $\mu$ is concentrated on $\{e_i\}_{i=1}^k\times\mathbb{R}^k$, where $e_i\in\mathbb{R}^k$ is the vector with $1$ at the $i$-th coordinate and $0$ everywhere else. Let $A\in\mathcal{B}(\Omega,\mathcal{B},\mu)$. We have that $\{v_i\}_{i=1}^k$ is a function partition if and only $\mathcal{D}_A(\{v_i\}_{i=1}^k)\in M_k$. Let $\mathcal{S}'_k(A):=\mathcal{S}_k(A)\cap M_k$. In other words $\mathcal{S}_k'(A)$ is the set of all probability measures $\mathcal{D}_A(\{v_i\}_{i=1}^k)$, where $\{v_i\}_{i=1}^\infty$ is a function partition.
The next theorem gives a useful equivalent formulation of $P$-operator convergence for uniformly bounded operators.

\begin{theorem}\label{partthm} Let $p\in [1,\infty),q\in (1,\infty]$ and let $\{A_i\}_{i=1}^\infty$ be a uniformly $(p,q)$-bounded sequence of $P$-operators. Then $\{A_i\}_{i=1}^\infty$ is convergent if and only if for every $k$ the sequence $\mathcal{S}'_k(A_i)$ is convergent.
\end{theorem}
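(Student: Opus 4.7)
The plan is to pass between general test systems $\{v_i\}_{i=1}^k\subseteq L^\infty_{[-1,1]}$ and indicator systems $\{1_{P_i}\}$ coming from measurable partitions, in both directions, using uniform $(p,q)$-boundedness to turn $L^p$-closeness of inputs into $L^1$-closeness of the outputs $v_iA$, whence L\'evy--Prokhorov closeness of joint distributions follows from Lemma \ref{coupdist2}.

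One direction, that $\mathcal{S}'_k$-convergence for all $k$ implies $\mathcal{S}_k$-convergence for all $k$, proceeds by discretization. Fix $k$, $\varepsilon>0$, and a system $\{v_i\}_{i=1}^k\subseteq L^\infty_{[-1,1]}(\Omega_a)$. For a large integer $N$, partition $[-1,1]$ into $N$ equal sub-intervals and let $\{P_s\}_{s=1}^{N^k}$ be the common refinement of the level-set partitions of the $v_i$. On each $P_s$ replace $v_i$ by the midpoint $\alpha_{i,s}$ of the interval in which $v_i$ lies there, producing $\tilde v_i=\sum_s\alpha_{i,s}1_{P_s}$ with $\|v_i-\tilde v_i\|_\infty\leq 2/N$. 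By linearity of $A_a$, the joint distribution $\mathcal{D}_{A_a}(\tilde v_1,\dots,\tilde v_k)$ is the push-forward of the partition profile $\mathcal{D}_{A_a}(1_{P_1},\dots,1_{P_{N^k}})\in\mathcal{S}'_{N^k}(A_a)$ under the fixed Lipschitz map $\Phi\colon (x_s,y_s)_s\mapsto\bigl(\sum_s\alpha_{i,s}x_s,\sum_s\alpha_{i,s}y_s\bigr)_{i=1}^k$. The hypothesis provides, for large $b$, a partition $\{Q_s\}$ of $\Omega_b$ whose partition profile is L\'evy--Prokhorov close to that of $\{P_s\}$; applying the same $\Phi$ yields $\mathcal{D}_{A_b}(w_1,\dots,w_k)\in\mathcal{S}_k(A_b)$ close to $\mathcal{D}_{A_a}(\tilde v_1,\dots,\tilde v_k)$ with $w_i=\sum_s\alpha_{i,s}1_{Q_s}$. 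Finally, since $\|(v_i-\tilde v_i)A_a\|_1\leq\|(v_i-\tilde v_i)A_a\|_q\leq c\|v_i-\tilde v_i\|_p\leq 2c/N$, Lemma \ref{coupdist2} controls the distance between $\mathcal{D}_{A_a}(\tilde v_1,\dots,\tilde v_k)$ and $\mathcal{D}_{A_a}(v_1,\dots,v_k)$; the same argument with $a,b$ swapped yields $\mathcal{S}_k$ Cauchy.

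The other direction, that $\mathcal{S}_k$-convergence implies $\mathcal{S}'_k$-convergence, proceeds by snapping. Fix $k$ and a partition $\{P_i\}_{i=1}^k$ on $\Omega_a$; set $\nu=\mathcal{D}_{A_a}(1_{P_1},\dots,1_{P_k})\in\mathcal{S}'_k(A_a)\subseteq\mathcal{S}_k(A_a)$. Invoking $\mathcal{S}_k$-Cauchy, for large $b$ there is $\{v_i\}_{i=1}^k\subseteq L^\infty_{[-1,1]}(\Omega_b)$ with $\mathcal{D}_{A_b}(v_1,\dots,v_k)$ L\'evy--Prokhorov $\delta$-close to $\nu$. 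Because the first $k$ marginals of $\nu$ are supported on $\{e_1,\dots,e_k\}$, the ``bad'' set $B$ where $(v_1(x),\dots,v_k(x))$ has distance greater than $\eta$ from $\{e_1,\dots,e_k\}$ has measure at most $\delta$ provided $\delta<\eta$. Let $Q_i$ consist of those $x\in\Omega_b$ for which $(v_1(x),\dots,v_k(x))$ is closest to $e_i$ (ties broken lexicographically); this is a partition of $\Omega_b$. On $\Omega_b\setminus B$ one has $|v_i-1_{Q_i}|\leq\eta$, and everywhere $|v_i-1_{Q_i}|\leq 2$, giving $\|v_i-1_{Q_i}\|_p\leq\eta+2\delta^{1/p}$ and hence $\|v_iA_b-1_{Q_i}A_b\|_1\leq c(\eta+2\delta^{1/p})$. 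By Lemma \ref{coupdist2}, $\mathcal{D}_{A_b}(1_{Q_1},\dots,1_{Q_k})\in\mathcal{S}'_k(A_b)$ is close to $\mathcal{D}_{A_b}(v_1,\dots,v_k)$, and thus to $\nu$. Symmetry yields Cauchyness of $\mathcal{S}'_k$.

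The main obstacle is the quantitative bookkeeping in the discretization step: both the number $N^k$ of partition pieces and the Lipschitz constant of $\Phi$ grow with the required precision $\varepsilon$. This is handled by choosing $N$ first in terms of $\varepsilon,k,c$ and then invoking the $\mathcal{S}'_{N^k}$-Cauchy hypothesis at a scale calibrated to the Lipschitz constant of $\Phi$. In the snapping direction the only delicacy is extracting the measure bound on $B$ directly from the L\'evy--Prokhorov hypothesis, which is possible precisely because $\nu$ is supported on $\{e_1,\dots,e_k\}\times\mathbb{R}^k$.
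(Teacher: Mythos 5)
Your proof is correct and takes essentially the same two-step approach as the paper's, which also splits the statement into a ``discretize to partitions'' lemma (your first direction) and a ``snap nearly-indicator systems to genuine partitions'' lemma (your second direction), with Lemma \ref{coupdist2} translating $L^p$/$L^1$ closeness of test functions into L\'evy--Prokhorov closeness of the joint distributions. The only minor nit: the midpoint of a length-$2/N$ subinterval is within $1/N$, not $2/N$, of every point of that subinterval, but this factor of two is harmless.
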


The proof of this theorem is a direct consequence of the following two lemmas.

\begin{lemma}\label{partlem} Let $p\in [1,\infty),q\in (1,\infty], c\in\mathbb{R}^+$ be fixed. Then for each $\varepsilon>0,k\in\mathbb{N}$ there is $\delta$ such that if $A$ is a  $P$-operator with $\|A\|_{p\to q}\leq c$ and $\mu\in\mathcal{S}_k(A)$ satisfies $d_{\rm LP}(\mu,M_k)\leq\delta$, then there is $\mu_2\in\mathcal{S}'_k(A)$ with $d_{\rm LP}(x,y)\leq\varepsilon$. Furthermore, if for two $P$-operators $A,B$ with $\|A\|_{p\to q},\|B\|_{p\to q}\leq c$ we have $d_H(\mathcal{S}_k(A),\mathcal{S}_k(B))\leq\delta/2$, then $$d_H(\mathcal{S}'_k(A),~\mathcal{S}'_k(B))\leq\varepsilon+2d_H(\mathcal{S}_k(A),\mathcal{S}_k(B)).$$
\end{lemma}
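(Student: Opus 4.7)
The lemma has two parts. The first is a rounding statement: any measure in $\mathcal{S}_k(A)$ close to $M_k$ can be approximated by a true partition measure in $\mathcal{S}'_k(A)$. The second follows from the first via a triangle inequality, once one transfers a partition measure from $A$ to a nearby element of $\mathcal{S}_k(B)$. For part 1, write $\mu=\mathcal{D}_A(v_1,\dots,v_k)$ with $v_i\in L^\infty_{[-1,1]}(\Omega)$ and pick $\sigma\in M_k$ with $d_{\rm LP}(\mu,\sigma)\leq\delta$. Projecting onto the first $k$ coordinates contracts the L\'evy--Prokhorov distance, so the marginal $\mu_1$ of the $\mathbb R^k$-valued random variable $V(x):=(v_1(x),\dots,v_k(x))$ is at distance at most $\delta$ from a measure $\sigma_1$ supported on $\{e_1,\dots,e_k\}$. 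Applying the L\'evy--Prokhorov inequality to $U=\{e_1,\dots,e_k\}$ yields $\mu_1(U^\delta)\geq 1-\delta$, i.e.\ the good set $E:=\{x:V(x)\in\bigcup_i B_\delta(e_i)\}$ satisfies $\mu(E)\geq 1-\delta$. For $\delta<1/2$ the balls $B_\delta(e_i)$ are disjoint, and I define $w_i(x):=1$ if $x\in E$ and $V(x)\in B_\delta(e_i)$, with the convention that all $x\in E^c$ are placed in bin $1$. Then $\{w_i\}_{i=1}^k$ is an honest function partition, so $\mu_2:=\mathcal{D}_A(w_1,\dots,w_k)\in\mathcal{S}'_k(A)$.

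To bound $d_{\rm LP}(\mu,\mu_2)$ I would invoke Lemma \ref{coupdist2}, whose hypotheses require control on $\|v_i-w_i\|_1$ and $\|v_iA-w_iA\|_1$. By construction $|v_i-w_i|\leq\delta$ on $E$ and $|v_i-w_i|\leq 2$ on $E^c$, so
\[\|v_i-w_i\|_p^p\leq \delta^p+2^p\delta,\]
which goes to zero as $\delta\to 0$. The bound $\|A\|_{p\to q}\leq c$ then gives $\|v_iA-w_iA\|_q\leq c\|v_i-w_i\|_p$, and since we are on a probability space, $\|\cdot\|_1\leq\|\cdot\|_p$ and $\|\cdot\|_1\leq\|\cdot\|_q$. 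Feeding these estimates into Lemma \ref{coupdist2} produces an upper bound on $d_{\rm LP}(\mu,\mu_2)$ that tends to zero with $\delta$; choosing $\delta=\delta(\varepsilon,k,c,p,q)$ small enough makes this bound at most $\varepsilon$, which is part 1.

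For part 2, let $\mu\in\mathcal{S}'_k(A)\subseteq\mathcal{S}_k(A)\cap M_k$. By the assumption on Hausdorff distance there is $\mu'\in\mathcal{S}_k(B)$ with $d_{\rm LP}(\mu,\mu')\leq d_H(\mathcal{S}_k(A),\mathcal{S}_k(B))\leq \delta/2$. Since $\mu\in M_k$, this gives $d_{\rm LP}(\mu',M_k)\leq\delta/2\leq\delta$, and part 1 applied to $B$ supplies $\mu_2\in\mathcal{S}'_k(B)$ with $d_{\rm LP}(\mu',\mu_2)\leq\varepsilon$. Two triangle inequalities yield $d_{\rm LP}(\mu,\mu_2)\leq\varepsilon+d_H(\mathcal{S}_k(A),\mathcal{S}_k(B))\leq\varepsilon+2d_H(\mathcal{S}_k(A),\mathcal{S}_k(B))$; swapping the roles of $A$ and $B$ delivers the other direction of the Hausdorff bound. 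The main obstacle is purely bookkeeping in part 1: one must convert a Prokhorov bound into an $L^p$ bound on $v_i-w_i$ via the rounding, push it through the $(p,q)$-bound on $A$, and translate back into a Prokhorov bound through Lemma \ref{coupdist2}; none of these steps is individually subtle, but they must be chained together to extract the quantification of $\delta$.
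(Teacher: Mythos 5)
Your proposal is correct and follows essentially the same route as the paper's proof: round the $v_i$ to a genuine function partition via small disjoint balls around $\{e_1,\dots,e_k\}$, bound $\|v_i-w_i\|_p$, push through $\|A\|_{p\to q}$ and Lemma \ref{coupdist2}, then conclude part 2 by a triangle inequality using part 1. The paper states the rounding step as an assertion without detail; you have supplied the same argument one would need to fill it in.
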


\begin{proof} Assume that $A\in\mathcal{B}(\Omega)$. Let $\mu=\mathcal{D}_A(\{v_i\}_{i=1}^k)$. If $\varepsilon_2>0$ and $\delta$ is small enough, then we have that there are $0-1$-valued functions  $w_1,w_2,\dots,w_k$ in $L^p(\Omega)$ such that $\sum_{i=1}^k w_i=1_{\Omega}$ and $\max_{i\in [k]}\|v_i-w_i\|_p\leq\varepsilon_2$. It is clear that if $\varepsilon_2$ is small enough, then $\mu_2:=\mathcal{D}_A(\{w_i\}_{i=1}^k)$ is in $S'_k(A)$ and $d_{\rm LP}(\mu,\mu_2)\leq\varepsilon$. 

To see the second claim let $\mu\in \mathcal{S}'_k(A)$. Then there is some $\mu_2\in\mathcal{S}_k(B)$ with $d_{\rm LP}(\mu,\mu_2)\leq 2d_H(\mathcal{S}_k(A),\mathcal{S}_k(B))\leq\delta.$ It follows from the first part of the statement that there is $\mu_3\in\mathcal{S}'_k(B)$ with $d_{\rm LP}(\mu_2,\mu_3)\leq\varepsilon$. We get that $d_{\rm LP}(\mu,\mu_3)\leq \varepsilon+2d_H(\mathcal{S}_k(A),\mathcal{S}_k(B)).$
\end{proof}

\begin{lemma}  Let $p\in [1,\infty),q\in (1,\infty], c\in\mathbb{R}^+$ be fixed.  For every $\varepsilon>0,k\in\mathbb{N}$ there is $\delta>0$ and $k'\in\mathbb{N}$ such that if $A\in\mathcal{B}(\Omega_1)$ and $B\in\mathcal{B}(\Omega_2)$ are $P$-operators with $\|A\|_{p\to q},\|B\|_{p\to q}\leq c$ and $d_H(\mathcal{S}'_{k'}(A),\mathcal{S}'_{k'}(B))\leq\delta$, then $d_H(\mathcal{S}_k(A),\mathcal{S}_k(B))\leq\varepsilon$.
\end{lemma}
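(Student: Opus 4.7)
The strategy is a discretization: approximate arbitrary $v_i\in L^\infty_{[-1,1]}(\Omega_1)$ by step functions built from a common fine partition, transport the resulting partition profile from $A$ to $B$ via the hypothesis, and push it forward linearly to land inside $\mathcal{S}_k(B)$.

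Fix $\varepsilon>0$ and $k\in\mathbb{N}$. Choose an integer $m$ (to be fixed later), partition $[-1,1]$ into $m$ intervals $I_1,\dots,I_m$ of length $2/m$ with midpoints $c_1,\dots,c_m$, and set $k':=m^k$. Given $v_1,\dots,v_k\in L^\infty_{[-1,1]}(\Omega_1)$, define $P_\alpha:=\bigcap_{i=1}^k v_i^{-1}(I_{\alpha_i})$ for $\alpha\in[m]^k$, so that $u_\alpha:=1_{P_\alpha}$ forms a function partition of $\Omega_1$. Set $\tilde v_i:=\sum_\alpha c_{\alpha_i}u_\alpha\in L^\infty_{[-1,1]}(\Omega_1)$; then $\|v_i-\tilde v_i\|_\infty\leq 1/m$. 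The key observation is that $\mathcal{D}_A(\{\tilde v_i\}_{i=1}^k)=L_*\mathcal{D}_A(\{u_\alpha\}_\alpha)$, where $L\colon\mathbb{R}^{2k'}\to\mathbb{R}^{2k}$ is the \emph{fixed} linear map $(x,y)\mapsto(Cx,Cy)$ with $C_{i,\alpha}:=c_{\alpha_i}\in[-1,1]$; crucially $L$ is independent of the operator.

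By hypothesis $d_H(\mathcal{S}'_{k'}(A),\mathcal{S}'_{k'}(B))\leq\delta$, so there is a function partition $\{u'_\alpha\}_\alpha$ of $\Omega_2$ with $d_{\rm LP}(\mathcal{D}_A(\{u_\alpha\}),\mathcal{D}_B(\{u'_\alpha\}))\leq 2\delta$. Setting $\tilde v'_i:=\sum_\alpha c_{\alpha_i}u'_\alpha\in L^\infty_{[-1,1]}(\Omega_2)$ yields $\mathcal{D}_B(\{\tilde v'_i\})=L_*\mathcal{D}_B(\{u'_\alpha\})\in\mathcal{S}_k(B)$. Since $L$ has Lipschitz constant at most $\|C\|_F\leq\sqrt{kk'}$, the standard pushforward estimate $d_{\rm LP}(L_*\mu,L_*\nu)\leq\max(1,\mathrm{Lip}(L))\cdot d_{\rm LP}(\mu,\nu)$ gives $d_{\rm LP}(\mathcal{D}_A(\{\tilde v_i\}),\mathcal{D}_B(\{\tilde v'_i\}))\leq 2\delta\sqrt{kk'}$. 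On the other hand, since $\mu_1$ is a probability measure, $\|v_i-\tilde v_i\|_p\leq\|v_i-\tilde v_i\|_\infty\leq 1/m$, and the uniform $(p,q)$-bound forces $\|v_iA-\tilde v_iA\|_1\leq\|v_iA-\tilde v_iA\|_q\leq c/m$; Lemma \ref{coupdist2} (applied as in the proof of Lemma \ref{limlem3}) then bounds $d_{\rm LP}(\mathcal{D}_A(\{v_i\}),\mathcal{D}_A(\{\tilde v_i\}))$ by some $C_0(k,c)m^{-1/2}$, and the analogous bound holds in $\Omega_2$. Choose $m$ large enough that $C_0(k,c)m^{-1/2}\leq\varepsilon/3$, and then (with $k'=m^k$ now fixed) $\delta$ small enough that $2\delta\sqrt{kk'}\leq\varepsilon/3$; chaining the three bounds by the triangle inequality and repeating with the roles of $A$ and $B$ swapped yields $d_H(\mathcal{S}_k(A),\mathcal{S}_k(B))\leq\varepsilon$.

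The only real obstacle is quantitative: $k'=m^k$ blows up exponentially and $\mathrm{Lip}(L)$ grows like $\sqrt{k'}$, so $\delta$ must be chosen \emph{after} $m$. Since the theorem only asserts the existence of some such $\delta$ and $k'$ in terms of $\varepsilon,k,p,q,c$, this poses no logical problem; the uniform $(p,q)$-bound is used exactly once, to convert the $L^p$-approximation of the $v_i$'s into an $L^1$-approximation of the $v_iA$'s, which is where both $A$ and $B$ must share the norm bound $c$.
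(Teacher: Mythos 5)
Your proof is correct and follows essentially the same strategy as the paper's: discretize the $v_i$'s into step functions constant on a common refinement partition, transport the resulting partition profile from $A$ to $B$ via the hypothesis on $\mathcal{S}'_{k'}$, reconstruct approximants in $\Omega_2$, and chain by the triangle inequality (with $m$ chosen before $\delta$). You are more explicit than the paper in isolating the fixed pushforward map $L$ and quantifying its Lipschitz constant, whereas the paper compresses this into "if $\varepsilon'$ is small enough... $\kappa$ is arbitrarily close", but the underlying argument is the same.
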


\begin{proof} Let $\mu\in\mathcal{S}_k(A)$ be given by $\mu=\mathcal{D}_A(\{v_i\}_{i=1}^k)$, where $v_i\in L^\infty_{[-1,1]}(\Omega_1)$ holds for $i\in [k]$. For an arbitrary natural number $m$ and $v\in L^p(\Omega)$ let $[v]_m$ denote the $m^{-1}\mathbb{Z}$-discretization of $v$ obtained by composing $v$ with the function $x\mapsto \lceil xm\rceil/m$. It is clear that $\|v-[v]_m\|_p\leq m^{-1}$. For every $i\in[k]$ the level sets of $[v_i]_m$ partition $\Omega_1$ into at most $2m$ measurable sets. By taking common refinement of the level sets we obtain that there is a partition $\{P_i\}_{i=1}^N$ of $\Omega_1$ into $N\leq (2m)^k$ measurable sets such that each $[v_i]_m$ is measurable in this partition. This means that there exist real numbers $\{u_{i,j}\}_{i\in [k],j\in[N]}$ between $-1$ and $1$ such that for every $i\in[k]$ we have $[v_i]_m=\sum_{j\in [N]}u_{i,j}1_{P_j}$. 

Let $\varepsilon'>0$ be some sufficiently small number. If $\delta$ is small enough, we have that there is a partition $\{Q_i\}_{i=1}^N$ of $\Omega_2$ such that $\mathcal{D}_A(\{1_{P_i}\}_{i=1}^N)$ and $\mathcal{D}_B(\{1_{Q_i}\}_{i=1}^N)$ are at most $\varepsilon'$ far in $d_{\rm LP}$. Let $w_i:=\sum_{j\in [N]}u_{i,j}1_{Q_j}$. It is clear that if $\varepsilon'$ is small enough, then $\kappa:=\mathcal{D}_B(\{w_i\}_{i=1}^k)$ is arbitrarily close to $\mathcal{D}_B(\{v_i\}_{i=1}^k)$. We obtain that if $m$ is big enough, and $\varepsilon'$ is small enough then $\kappa\in\mathcal{S}_k(B)$ is at most $\varepsilon$ far from $\mu$. All the estimates in the proof depend only on $c,\varepsilon,k$ and $p,q$.

\end{proof}

\section{Dense graph limits and graphons}

In this chapter we explain how the so-called dense graph limit theory fits into our general limit theory. Let us consider the probability space $([0,1],\mathcal{L},\lambda)$, where $\lambda$ is the Lebesgue measure on the Lebesgue $\sigma$-algebra $\mathcal{L}$. Special $P$-operators on $L^2([0,1])$ -- called graphons -- play a crucial role in dense graph limits. A {\bf graphon} is a two variable measurable function $W:[0,1]^2\to [0,1]$ with the symmetry property that $W(x,y)=W(y,x)$ holds for every $x,y\in [0,1]$. Graphons act on the Hilbert space $L^2([0,1])$ by 
$$(fW)(x):=\int_y f(y)W(y,x)~d\lambda,$$ where $f\in L^2([0,1])$. It is easy to see that $\|W\|_{2\to 2}\leq 1$ and thus graphons are $P$-operators. It is also clear that graphons are positivity-preserving and self-adjoint operators and hence they are also graphops. Let $\mathcal{W}$ denote the space of graphons. For $U,W\in\mathcal{W}$ we say that $U\sim W$ ($U$ is isomorphic to $W$) if $\delta_\square(U,W)=0$. Let $\tilde{\mathcal{W}}:=\mathcal{W}/\,\sim$ be the set of equivalence classes. The next theorem from \cite{LSz} is a fundamental result in graph limit theory.

\begin{theorem}\label{densecomp}[Lov\'asz--Szegedy, \cite{LSz}] The metric space $(\tilde{\mathcal{W}},\delta_\square)$ is compact.
\end{theorem}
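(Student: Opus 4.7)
The plan is to deduce compactness of $(\tilde{\mathcal{W}},\delta_\square)$ from the general $P$-operator compactness result Theorem~\ref{thm:compact} together with the equivalence of $d_M$-convergence and $\delta_\square$-convergence on $\mathcal{W}$ announced in the introduction.

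First, observe that every graphon $W$ acts as a $P$-operator on $([0,1],\lambda)$ with $\|W\|_{1\to\infty}\leq 1$, because $|(fW)(x)|\leq\int |f(y)|W(y,x)\,d\lambda\leq\|f\|_1$ holds for every $f\in L^\infty([0,1])$ and a.e.\ $x$ (using $W(y,x)\in[0,1]$). Consequently any sequence $\{W_n\}\subset\mathcal{W}$ lies inside the compact space $\mathcal{X}'_{1,\infty,1}$ of weak equivalence classes of atomless $P$-operators with $\|\cdot\|_{1\to\infty}\leq 1$. Extracting a $d_M$-convergent subsequence $\{W_{n_i}\}$ with limit $A\in\mathcal{X}'_{1,\infty,1}$, Propositions~\ref{closedprop} and \ref{closedprop2} imply that $A$ is self-adjoint and positivity-preserving, hence a graphop.

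The main step is to show that $A$ is representable by a genuine graphon $U\in\mathcal{W}$. Apply Theorem~\ref{measrepthm} to obtain a symmetric finite measure $\nu$ on $\Omega^2$, where $\Omega$ is the underlying atomless standard probability space, which we identify with $([0,1],\lambda)$ via the classical isomorphism theorem. The bound $\|A\|_{1\to\infty}\leq 1$ yields $\nu(S\times T)=(1_S,1_T)_A\leq\|1_T\|_1\|1_SA\|_\infty\leq\lambda(S)\lambda(T)$ for all measurable $S,T\subseteq[0,1]$; extending this rectangle estimate to the product $\sigma$-algebra gives $\nu\ll\lambda^2$ with Radon--Nikodym derivative $U$ satisfying $0\leq U\leq 1$ almost everywhere. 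Symmetry of $\nu$ forces $U(x,y)=U(y,x)$ a.e., so $U\in\mathcal{W}$; and by construction of $\nu$ the identity $(f,g)_A=\int f(x)g(y)U(x,y)\,d\lambda^2$ shows that $A$ is exactly the integral operator $A_U$.

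Finally, invoking the theorem from the introduction that $P$-operator convergence restricted to $\mathcal{W}$ coincides with graphon convergence, the $d_M$-convergence $W_{n_i}\to U$ yields $\delta_\square(W_{n_i},U)\to 0$. Thus $(\tilde{\mathcal{W}},\delta_\square)$ is sequentially compact, and hence compact. The principal obstacle is the middle step: converting an abstract $P$-operator limit $A$ into a concrete graphon kernel in $L^\infty([0,1]^2)$. This requires using the $(1,\infty)$-boundedness to upgrade the representing measure of $A$ to an absolutely continuous measure with bounded density; the self-adjointness and positivity-preservation then supply the symmetry and $[0,1]$-valuedness needed for the limit to live in $\mathcal{W}$.
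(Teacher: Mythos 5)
This theorem is cited in the paper as an external result of Lov\'asz and Szegedy (\cite{LSz}); the paper supplies no proof, and the classical proof goes through the weak regularity lemma and a martingale (or a Szemer\'edi-regularity diagonal) argument, entirely outside the $P$-operator framework. Your proposal attempts to re-derive it from the $P$-operator machinery built later in the paper, which would be an interesting alternative route if it worked, but it contains a fatal circularity.

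The final step of your argument invokes ``the theorem from the introduction that $P$-operator convergence restricted to $\mathcal{W}$ coincides with graphon convergence'' to conclude $\delta_\square(W_{n_i},U)\to 0$ from $d_M(W_{n_i},U)\to 0$. That theorem is made precise in this paper as Theorem~\ref{dmequiv}, and its proof explicitly uses Theorem~\ref{densecomp} -- the very statement you are trying to prove. Concretely, the proof of Theorem~\ref{dmequiv} starts from a hypothetical pair of sequences $\{U_i\}$, $\{W_i\}$ with $d_M(U_i,W_i)\to 0$ but $\delta_\square(U_i,W_i)>\varepsilon$, and then ``by Theorem~\ref{densecomp}'' passes to $\delta_\square$-convergent subsequences. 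Moreover the easy direction that is available without compactness, Lemma~\ref{limspdm2}, bounds $d_M$ by a power of $\delta_\square$, which is the wrong direction for your purpose: it does not let you convert $d_M$-smallness into $\delta_\square$-smallness. So the implication you invoke cannot be obtained independently of Theorem~\ref{densecomp}, and the argument is circular.

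Two smaller remarks. First, your choice of exponents $(p,q)=(1,\infty)$ does not satisfy the hypotheses of Proposition~\ref{closedprop}, which requires $q\in(1,\infty)$; you should instead use the uniform bound $\|W\|_{2\to 2}\le 1$ and work with $(p,q)=(2,2)$, which satisfies both Propositions~\ref{closedprop} and \ref{closedprop2}. Second, the middle step (upgrading the representing measure $\nu$ of the limit graphop $A$ to an absolutely continuous measure with density in $[0,1]$ via the rectangle bound $\nu(S\times T)\le \lambda(S)\lambda(T)$ and a monotone class argument) is sound and is the genuinely useful part of your sketch; it shows that $d_M$-limits of graphons are again represented by graphons. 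What is missing is a non-circular reason that the resulting $d_M$-convergence implies $\delta_\square$-convergence, and that cannot be supplied by citing Theorem~\ref{dmequiv}.
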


The space $(\tilde{\mathcal{W}},\delta_\square)$ is basically the graph limit space. Every finite graph $G=(V(G),E(G))$ with $V(G)=[n]$ is represented in $\tilde{\mathcal{W}}$ by the function $W_G$ with $W_G(x,y):= 1$ if $(\lceil xn\rceil,\lceil yn\rceil)\in E(G)$ and $W_G(x,y):=0$ otherwise. Graph convergence in dense graph limit theory is equivalent to the convergence of the representing functions $W_G$ in $(\tilde{\mathcal{W}},\delta_\square)$.
Our next theorem shows that this limit theory is embedded into our more general limit framework. 

\begin{theorem}\label{dmequiv} The two pseudometrics $\delta_\square$ and $d_M$ are equivalent on $\mathcal{W}$. 
\end{theorem}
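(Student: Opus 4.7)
The plan is to show that $\delta_\square$ and $d_M$ induce the same topology on $\mathcal{W}$.

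One direction is immediate from Lemma~\ref{limspdm2}: for any two graphons $U,W$,
\[ d_M(U,W)\le 12\,\delta_\square(U,W)^{1/2}, \]
so $\delta_\square$-convergence entails $d_M$-convergence, and $d_M$ descends to a continuous pseudometric on the quotient space $\tilde{\mathcal{W}}=\mathcal{W}/\!\sim$.

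For the reverse direction I would appeal to compactness. By Theorem~\ref{densecomp}, $(\tilde{\mathcal{W}},\delta_\square)$ is compact. If one can verify the separation property
\[ d_M(U,W)=0\ \Longrightarrow\ \delta_\square(U,W)=0, \]
then the identity map $(\tilde{\mathcal{W}},\delta_\square)\to(\tilde{\mathcal{W}},d_M)$ is a continuous bijection from a compact space to a Hausdorff space, hence a homeomorphism; the two topologies on $\mathcal{W}$ then agree, and the pseudometrics are equivalent as claimed.

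The separation step is the core of the argument. Suppose $d_M(U,W)=0$. Since any graphon is $(2,2)$-bounded with $\|\cdot\|_{2\to 2}\le 1$, the quantitative lemma that feeds into Proposition~\ref{quprop} (applied with $c=1$ and $p=q=2$) yields $d_{1,H}(\mathcal{Q}_k(U),\mathcal{Q}_k(W))=0$ for every $k\in\mathbb{N}$; the closures of the balanced fractional $k\times k$ quotient sets of $U$ and $W$ therefore coincide. At this point I would invoke the classical fact from dense graph limit theory --- due to Borgs--Chayes--Lov\'asz--S\'os--Vesztergombi, see also \cite{LSz,lovaszbook} --- that two graphons sharing their balanced fractional quotient sets at every resolution $k$ are $\delta_\square$-equivalent. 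A more self-contained alternative would be to extract all subgraph densities $t(F,W)$ from the $k$-profiles by unfolding them as iterated evaluations of the bilinear form $(\cdot,\cdot)_W$, show that $t(F,U)=t(F,W)$ for every finite graph $F$, and conclude with the inverse counting lemma of \cite{LSz}.

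The hard part is precisely this separation step: what is truly needed is the rigidity statement that weak-isomorphism data (quotients, profiles, or subgraph densities) pin down a graphon up to $\delta_\square$. Compactness of $\tilde{\mathcal{W}}$ and the continuity inequality from Lemma~\ref{limspdm2} are soft inputs that by themselves do not suffice; the injectivity of the map $(\tilde{\mathcal{W}},\delta_\square)\to(\tilde{\mathcal{W}},d_M)$ must be extracted from genuine dense graph limit theory.
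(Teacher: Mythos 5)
Your proof is correct and follows essentially the same route as the paper: one direction via Lemma~\ref{limspdm2}, and for the converse the same three ingredients --- compactness of $(\tilde{\mathcal{W}},\delta_\square)$ from Theorem~\ref{densecomp}, the quotient comparison underlying Proposition~\ref{quprop}, and the rigidity statement from \cite{BCCZ2} that matching quotient sets force $\delta_\square$-equivalence. The only difference is cosmetic: you package the compactness step as ``continuous bijection from compact to Hausdorff is a homeomorphism,'' whereas the paper runs the equivalent contradiction/subsequence argument; the substance and the essential separation step are identical.
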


\begin{proof} By Lemma \ref{limspdm2} it remains to show that for every $\varepsilon>0$ there is $\delta>0$ such that if $d_M(U,W)\leq\delta$, then $\delta_\square(U,W)\leq\varepsilon$. By contradiction, let us assume that there exist $\varepsilon>0$ and two sequence of graphons $\{U_i\}_{i=1}^\infty$ and $\{W_i\}_{i=1}^\infty$ such that $\delta_\square(U_i,W_i)>\varepsilon$ for every $i\in\mathbb{N}$ and $\lim_{i\to\infty}d_M(U_i,W_i)=0$. By choosing an appropriate subsequence we can assume by Theorem \ref{densecomp} that $\lim_{i\to\infty} U_i=U$ and $\lim_{i\to\infty}W_i=W$ holds where the convergence is in $\delta_\square$. We obtain that $\delta_\square(U,W)\geq\varepsilon$. On the other hand by the triangle inequality and Lemma \ref{limspdm2} we have that 
\begin{align*}d_M(U,W)&\leq d_M(U,U_i)+d_M(U_i,W_i)+d_M(W_i,W)\\&\leq d_M(U_i,W_i)+12\delta_\square(U,U_i)^{1/2}+12\delta_\square(W,W_i)^{1/2}\end{align*} and thus by taking $\lim_{i\to\infty}$ we get that $d_M(U,W)=0$. We have by Proposition \ref{quprop} that $d_{1,H}(\mathcal{Q}_k(U),\mathcal{Q}_k(W))=0$ holds for every $k\in\mathbb{N}$. It is well known in graph limit theory (see e.g. \cite{BCCZ2})  that such quotient equivalence implies $\delta_\square(U,W)=0$.
\end{proof}

The next lemma is rather technical and we skip the proof.

\begin{lemma} For every $\varepsilon>0$ there exists a number $n$ such that if $G$ is a finite graph with $|V(G)|\geq n$, then $d_M(A(G)/|V(G)|, W_G)\leq\varepsilon$.
\end{lemma}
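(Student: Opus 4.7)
The plan is to fix $k\in\mathbb{N}$ and to show that $d_H(\mathcal{S}_k(A(G)/n),\mathcal{S}_k(W_G))$ can be made arbitrarily small uniformly over all graphs $G$ with $n=|V(G)|$ sufficiently large; the bound on $d_M$ then follows by truncating the series $\sum_k 2^{-k}d_H$, using the trivial bound $d_H\leq 1$ (a consequence of $d_{\mathrm{LP}}\leq 1$) for the tail.

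One containment is immediate: the graphon $W_G$ is constant on every $(1/n)\times(1/n)$ square, so for any $v_1,\dots,v_k\in L^\infty_{[-1,1]}([n])$ the corresponding step functions $\tilde v_j(x):=v_j(\lceil xn\rceil)$ satisfy $\tilde v_jW_G=\widetilde{v_jA(G)/n}$ pointwise on $[0,1]$. Hence $\mathcal{D}_{W_G}(\tilde v_1,\dots,\tilde v_k)=\mathcal{D}_{A(G)/n}(v_1,\dots,v_k)$, giving $\mathcal{S}_k(A(G)/n)\subseteq\mathcal{S}_k(W_G)$ with no error.

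For the reverse inclusion, fix $f_1,\dots,f_k\in L^\infty_{[-1,1]}([0,1])$ and write $h_j:=f_jW_G$, which is a step function on the intervals $I_{i'}:=((i'-1)/n,i'/n]$. I will produce vectors on $[n]$ by random sampling: let $x_{i'}$ be independent with $x_{i'}$ uniform on $I_{i'}$, and set $v_j(i'):=f_j(x_{i'})$. A short calculation gives
\[(v_jA(G)/n)(i')-h_j(i'/n)=\tfrac{1}{n}\sum_{j'\sim i'}\bigl(f_j(x_{j'})-\bar f_j(j')\bigr),\]
where $\bar f_j(j'):=n\int_{I_{j'}}f_j$. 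The summands are independent, centred and bounded by $2$, so the expected mean-square error averaged over $i'\in[n]$ is at most $2|E(G)|/n^3\leq 1/n$, a bound uniform in $G$ and in $f_1,\dots,f_k$. Hence by Markov's inequality combined with Lemma \ref{coupdist2}, with positive probability the second half of coordinates of $\mathcal{D}_{A(G)/n}(v_1,\dots,v_k)$ is within $O((k/n)^{1/4}k^{3/4})$ in $d_{\mathrm{LP}}$ of the empirical measure $\nu_n:=\tfrac1n\sum_{i'}\delta_{(f_1(x_{i'}),\dots,f_k(x_{i'}),h_1(x_{i'}),\dots,h_k(x_{i'}))}$, while the first $k$ coordinates already agree by construction.

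The step I expect to be the main obstacle is showing that $\nu_n$ converges in $d_{\mathrm{LP}}$ to $\alpha:=\mathcal{D}_{W_G}(f_1,\dots,f_k)$ at a rate that is uniform in the choice of the $f_j$. Since $\mathbb{E}\,\nu_n=\alpha$ and both measures are supported in $[-1,1]^{2k}$, the plan is a covering argument: cover $[-1,1]^{2k}$ by $O((k/\varepsilon)^{2k})$ balls of radius $\varepsilon/4$, apply Hoeffding's inequality (using the independence of the $x_{i'}$) to the $\nu_n$-mass of each ball, and union-bound to obtain $d_{\mathrm{LP}}(\nu_n,\alpha)\leq\varepsilon$ with positive probability whenever $n\geq n_0(k,\varepsilon)$, crucially with $n_0$ depending only on $k$ and $\varepsilon$ and not on the functions $f_j$ or on the graph $G$. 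Combining the two steps yields $d_H(\mathcal{S}_k(A(G)/n),\mathcal{S}_k(W_G))\leq\varepsilon$ for all such $n$; choosing $K$ with $2^{-K+1}<\varepsilon/2$ and taking $n\geq\max_{k\leq K}n_0(k,\varepsilon/(2K))$ then gives $d_M(A(G)/|V(G)|,W_G)\leq\varepsilon$, as required.
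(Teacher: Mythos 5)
The paper explicitly states that it ``skips the proof'' of this lemma, so there is no paper argument to compare your plan against; I can only assess it on its own merits, and I believe it is essentially sound. The exact inclusion $\mathcal{S}_k(A(G)/n)\subseteq\mathcal{S}_k(W_G)$ via step functions is correct (for $v_j\in L^\infty_{[-1,1]}([n])$ with $\tilde v_j(x):=v_j(\lceil xn\rceil)$, one indeed has $(\tilde v_j W_G)(x)=\tfrac1n\sum_{j'\sim\lceil xn\rceil}v_j(j')=(v_jA(G)/n)(\lceil xn\rceil)$, and all functions are measurable with respect to the length-$1/n$ interval partition). The real content is the reverse containment, and your idea — stratified random sampling $v_j(i'):=f_j(x_{i'})$ with $x_{i'}$ uniform on $I_{i'}$, the identity $(v_jA(G)/n)(i')-h_j(i'/n)=\tfrac1n\sum_{j'\sim i'}(f_j(x_{j'})-\bar f_j(j'))$, and concentration — is the right mechanism, and all estimates are uniform in $G$ and in the $f_j$ as required. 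Two points to tighten before this is a proof rather than a plan. First, you need the Markov and Hoeffding events to hold \emph{simultaneously}; writing ``with positive probability'' twice does not yield a positive probability of the intersection, so pick quantitative thresholds (e.g.\ failure probability $\leq 1/4$ for each event once $n$ is large enough). Second, for the L\'evy--Prokhorov control it is cleaner to \emph{partition} $[-1,1]^{2k}$ into $N=O((\sqrt{k}/\varepsilon)^{2k})$ half-open cubes of diameter $<\varepsilon$ than to cover by balls, since overlaps spoil the argument: with a partition, $|\nu_n(P)-\alpha(P)|\leq\varepsilon/N$ for all cubes $P$ gives, for any Borel $U$, $\nu_n(U)\leq\sum_{P\cap U\neq\emptyset}\nu_n(P)\leq\alpha(U^\varepsilon)+\varepsilon$ and symmetrically, hence $d_{\rm LP}(\nu_n,\alpha)\leq\varepsilon$; Hoeffding plus a union bound over $N$ cubes then requires only $n\gtrsim N^2\log N/\varepsilon^2$, which depends only on $k,\varepsilon$. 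With those two adjustments the argument closes cleanly.
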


Together with Theorem \ref{dmequiv} it implies the following.

\begin{proposition} If $\{G_i\}_{i=1}^\infty$ is a growing sequence of finite graphs, then the action convergence of $\{A(G_i)/|V(G_i)|\}_{i=1}^\infty$ is equivalent to dense graph convergence. 
\end{proposition}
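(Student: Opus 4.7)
The plan is to assemble the equivalence directly from the two ingredients just established: the technical approximation lemma $d_M(A(G)/|V(G)|, W_G) \leq \varepsilon$ for $|V(G)| \geq n(\varepsilon)$, and Theorem \ref{dmequiv} (equivalence of $\delta_\square$ and $d_M$ on $\mathcal{W}$). Since $\{G_i\}_{i=1}^\infty$ is a growing sequence, $|V(G_i)| \to \infty$, so the approximation lemma gives
\[
\lim_{i\to\infty} d_M(A(G_i)/|V(G_i)|, W_{G_i}) = 0.
\]

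First I would recall that dense graph convergence of $\{G_i\}_{i=1}^\infty$ is by definition the convergence of the associated graphons $\{W_{G_i}\}_{i=1}^\infty$ in the pseudometric $\delta_\square$, i.e. that $\{W_{G_i}\}$ is Cauchy in $\delta_\square$. By Theorem \ref{dmequiv}, this is the same as saying that $\{W_{G_i}\}$ is Cauchy in $d_M$. Hence it suffices to prove: the sequence $\{A(G_i)/|V(G_i)|\}_{i=1}^\infty$ is Cauchy in $d_M$ (i.e. action convergent) if and only if $\{W_{G_i}\}_{i=1}^\infty$ is Cauchy in $d_M$.

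This last equivalence is a direct triangle inequality argument. For any $i,j$,
\[
\bigl| d_M(A(G_i)/|V(G_i)|, A(G_j)/|V(G_j)|) - d_M(W_{G_i}, W_{G_j}) \bigr| \leq d_M(A(G_i)/|V(G_i)|, W_{G_i}) + d_M(A(G_j)/|V(G_j)|, W_{G_j}),
\]
and the right-hand side tends to $0$ as $i,j \to \infty$ by the lemma. Thus one sequence is Cauchy in $d_M$ exactly when the other is, concluding the proof.

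There is no real obstacle here: the deep content has already been absorbed into Theorem \ref{dmequiv} (whose proof uses the compactness of $(\tilde{\mathcal{W}}, \delta_\square)$ and the fact that graphon quotient equivalence implies $\delta_\square$-equivalence) and into the preceding approximation lemma (whose skipped proof compares the joint distributions produced by $A(G)/|V(G)|$ acting on step-function-approximated test vectors with the joint distributions produced by $W_G$, using that discretization errors are $O(1/|V(G)|)$ in the relevant norms). Given those two inputs, the statement reduces to a triangle inequality, so the only thing to be careful about is ensuring that ``growing'' is interpreted as $|V(G_i)| \to \infty$, so that the approximation lemma applies eventually.
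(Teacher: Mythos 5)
Your argument is exactly the one the paper intends: the paper presents the technical approximation lemma and then states that ``Together with Theorem \ref{dmequiv} it implies the following,'' leaving the triangle-inequality assembly implicit. Your proposal correctly fills in that assembly and matches the paper's approach.
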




\section{Benjamini--Schramm and local-global limits}

Benjamini--Schramm and local-global limits are used in the study of bounded degree graphs. Let $d$ be a fixed number and let $\mathcal{G}_d$ denote the set of isomorphism classes of graphs with maximal degree at most $d$. Informally speaking, a graph sequence $\{G_i\}_{i=1}^\infty$ in $\mathcal{G}_d$ is Benjamini--Schramm convergent if for every fixed $r$ the probability distribution of isomorphism classes of neighborhoods of radius $r$ converges when $i$ goes to infinity. It is often useful to refine this convergence notion to the local-global setting. In this framework we put "colorings" on the vertex sets of $G_i$ in all possible ways and look at all possible colored neighborhood statistics. It is not to confuse with colored Benjamini--Schramm limits, where we put one coloring on each graph $G_i$. We give the formal definition of local-global limits.

We summarize the notion of local-global convergence based on \cite{HLSz}. A rooted graph is a graph with a distinguished vertex $o$ called {\it root}. The radius of a rooted graph is the maximal distance from the root over all vertices. A $k$-coloring of a graph $G=(V,E)$ is a function $f:V\to [k]$. 
Let $\mathcal{G}_{d,k,r}$ denote the set of isomorphism classes of $k$-colored rooted graphs of maximal degree at most $d$ and radius at most $r$. Note that $\mathcal{G}_{d,k,r}$ is a finite set. We denote by $\mathcal{P}(\mathcal{G}_{d,k,r})$ the set of probability distributions on $\mathcal{G}_{d,k,r}$. We have that $\mathcal{P}(\mathcal{G}_{d,k,r})$ together with the total variation distance $d_{\rm TV}$ is a compact metric space. By abusing the notation we denote by $d_H$ the Hausdorff distance for subsets in $(\mathcal{P}(\mathcal{G}_{d,k,r}),d_{\rm TV})$. 

Let $G=(V,E)\in\mathcal{G}_d$ and let $f:V\to [k]$ be a $k$-coloring. We denote by $\tau_r(G,f)$ the probability distribution on $\mathcal{G}_{d,k,r}$ obtained by putting the root $o$ on a uniformly chosen random vertex of $G$ and then taking the colored neighborhood of $o$ of radius $r$. Let $Z_{k,r}(G)$ denote the set of all possible probability distributions $\tau_r(G,f)$, where $f$ runs through all possible $k$-colorings of $G$. We have that $Z_{k,r}(G)$ is a subset of $\mathcal{P}(\mathcal{G}_{d,k,r})$.

\begin{definition} A graph sequence $\{G_i\}_{i=1}^\infty$ is called local-global convergent if for every $r,k\geq 1$ the sequence $Z_{k,r}(G_i)$ is convergent in the metric $d_H$.
\end{definition}

It was proved in \cite{HLSz} that limits of local-global convergent graph sequences can be described by certain Borel graphs called graphings. We give the formal definition.

\begin{definition} {\bf (Graphing)}  Let $X$ be a Polish topological space and let $\nu$ be a probability measure on the Borel sets in $X$. A graphing is a graph $G$ on $V(G)=X$ with Borel measurable edge set $E(G)\subset X\times X$ in which all degrees are at most $d$ and 
\begin{equation}\label{gmeaspres}
\int_A e(x,B)d\nu(x)=\int_B e(x,A)d\nu(x)
\end{equation}
 for all measurable sets $A,B\subseteq X$, where $e(x,S)$ is the number of edges from $x\in X$ to $S\subseteq X$.
\end{definition}

A $k$-coloring of a graphing $G=(X,E)$ is a measurable function $f:X\to [k]$. The probability measure $\nu$ allows us to talk about random vertices in $G$. The colored neighborhood of a random vertex in $G$ is a graph in $\mathcal{G}_{d,k,r}$. The probability distribution $\tau_r(G,f)$, the  measure  $Z_{k,r}(G)$ and convergence are similarly defined as for finite graphs. 

 Finite graphs are special graphings, where $X$ is a finite set and $\nu$ is the uniform distribution.
It will be important that graphings are bounded operators on $L^2(X,\nu)$. The action is given by $$(vG)(x)=\sum_{(y,x)\in E}v(y)$$ for $v\in L^2(X,\nu)$. We have that $\|G\|_2\leq d$. Note that the integral formula (\ref{gmeaspres}) is equivalent to the fact that $G$ is a self-adjoint operator. Graphings are also positivity-preserving and hence they are examples for graphops. The next theorem is proved in \cite{star}. 

\begin{theorem}\label{starconv} A sequence of graphings $\{G_i\}_{i=1}^\infty$ is local-global convergent if and only if $Z_{k,1}(G_i)$ is convergent in $d_H$ for every fixed $k\geq 1$.
\end{theorem}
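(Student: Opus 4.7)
The forward direction is immediate from the definition: local-global convergence asks for $d_H$-convergence of $Z_{k,r}(G_i)$ for every $r$ and every $k$, and the hypothesis is just the case $r=1$. Everything is in the reverse direction, so the plan is to prove, by induction on $r$, that if $Z_{k,1}(G_i)$ is $d_H$-convergent for every $k\geq 1$, then $Z_{k,r}(G_i)$ is $d_H$-convergent for every $k\geq 1$ and every $r\geq 1$.

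For the inductive step, given a $k$-coloring $f:V(G)\to[k]$ of a graphing $G$, the natural move is to introduce the refined coloring $\hat f:V(G)\to\mathcal G_{d,k,r-1}$ with $\hat f(v)$ equal to the isomorphism class of the $f$-colored radius-$(r-1)$ ball rooted at $v$. Since degrees are bounded by $d$, the value $\hat f(v)$ depends Borel-measurably on $v$ and takes values in the finite set $\mathcal G_{d,k,r-1}$, so $\hat f$ is a legitimate $K$-coloring with $K:=|\mathcal G_{d,k,r-1}|$. The goal is then to exhibit a continuous map, independent of $G$, which sends $\tau_1(G,\hat f)$ to $\tau_r(G,f)$; combined with appropriate consistency conditions describing which elements of $\mathcal P(\mathcal G_{d,K,1})$ arise from an $\hat f$ that truly refines some $f$, this would realize $Z_{k,r}(G)$ as the continuous image of a closed subset of $Z_{K,1}(G)$, and hence transfer $d_H$-convergence from $Z_{K,1}(G_i)$ to $Z_{k,r}(G_i)$.

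The heart of the matter, and in my view the main obstacle, is a combinatorial gluing lemma: from the radius-$1$ star at $v$ labeled by $\hat f$ one can read off the radius-$(r-1)$ colored balls at $v$ and at each neighbor $u$ of $v$, and their union equals the radius-$r$ ball at $v$, but a priori two distinct vertices lying in balls at different neighbors $u_1,u_2$ of $v$ can produce identical rooted colored sub-balls, so identification of overlaps is not visible from the labels alone. I would address this by enlarging the label palette: for every auxiliary coloring $g:V(G)\to[m]$ one considers the refinement of the product $(f,g)$, and exploits the fact that the hypothesis grants $d_H$-convergence of $Z_{mK,1}(G_i)$ for arbitrary $m$. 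Running $g$ through all $m$-colorings and tracking the joint radius-$1$ statistics probes whether two apparently identical sub-balls really share a vertex, and in the $m\to\infty$ limit the ambiguity collapses. Establishing that this probabilistic separation is uniform enough to yield Hausdorff convergence (rather than only pointwise convergence of individual $\tau_r(G,f)$) is the delicate step, and would be carried out together with a diagonal argument that uses compactness of $(\mathcal P(\mathcal G_{d,k,r}),d_{\rm TV})$ and the Hausdorff-compactness of its closed subsets to close the induction.
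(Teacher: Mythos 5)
You should first note that the paper contains no proof of Theorem \ref{starconv} at all: it is quoted from the manuscript \cite{star} on the colored star metric, and the statement is precisely the main result of that reference. So there is nothing in the paper to compare your argument against, and the proposal has to stand on its own. It does not: the forward implication is indeed immediate, as you say, but for the reverse implication you have written a programme rather than a proof. The step you explicitly defer --- reconstructing the radius-$r$ colored-ball statistics from radius-$1$ statistics of ball-type refinements, i.e.\ the gluing problem together with the uniformity of the auxiliary-coloring probing --- is not a technical footnote; it is the entire content of the theorem, and leaving it as ``would be carried out'' means the hard direction is unproved.

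Beyond the deferred step, two parts of the plan would fail as written. First, the claim that the radius-$(r-1)$ balls at $v$ and at its neighbours have union equal to the radius-$r$ ball at $v$ is false under the usual convention that the radius-$r$ neighborhood is the induced rooted subgraph: an edge joining two vertices at distance exactly $r$ from the root lies in none of those sub-balls (already for $r=1$, an edge between two neighbours of the root is missed), so even the ``easy'' half of the reconstruction needs repair. Second, and more seriously, the inference ``realize $Z_{k,r}(G)$ as the continuous image of a closed subset of $Z_{K,1}(G)$, and hence transfer $d_H$-convergence'' is invalid as stated: Hausdorff convergence of the full sets $Z_{K,1}(G_i)$ does not pass to the constrained subsets consisting of star statistics of genuine ball-type refinements $\hat f$, because the constraint is not stable under small perturbations in total variation. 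To make this work you would need a rounding lemma, with error bounds uniform in $i$: any $K$-coloring of $G_i$ whose star statistics approximately satisfy your consistency conditions can be corrected to an exact ball-type refinement with nearby statistics. This is the analogue, for a much more rigid constraint, of what Lemma \ref{partlem} does for function partitions, and nothing of the sort is established; likewise the $m\to\infty$ probing of overlaps is only gestured at, and the uniformity you yourself flag as delicate is exactly what Hausdorff (as opposed to pointwise) convergence requires. So the proposal correctly locates the difficulty but does not close it, and as it stands it proves only the trivial direction.
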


Our main theorem here says that, restricted to graphings, $P$-operator convergence is the same as local-global convergence. Consequently, $P$-operator convergence is a generalization of graphing convergence.

\begin{theorem}\label{locglobeq} A sequence of graphings is local-global convergent if and only if it is convergent in the metric $d_M$.
\end{theorem}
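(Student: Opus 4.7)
My plan is to reduce both sides of the equivalence to $d_H$-convergence of the same family of finite-support profiles, then invoke two results already proved in the paper. Since a graphing $G$ of maximum degree at most $d$ satisfies $\|G\|_{2\to 2}\leq d$, a sequence $\{G_i\}$ of graphings of uniformly bounded maximum degree is uniformly $(2,2)$-bounded, so Theorem~\ref{partthm} lets me replace $d_M$-convergence of $\{G_i\}$ by $d_H$-convergence of the partition profiles $\mathcal{S}'_k(G_i)$ for every $k$. In parallel, Theorem~\ref{starconv} reduces local-global convergence of $\{G_i\}$ to $d_H$-convergence of the colored $1$-neighborhood statistics $Z_{k,1}(G_i)$ for every $k$. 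It therefore suffices to establish, for each fixed $k$, a metric equivalence between $\mathcal{S}'_k(G)$ and $Z_{k,1}(G)$ uniform over all graphings of max degree at most $d$.

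The key observation is that function partitions $v_j=1_{P_j}$ for $j\in[k]$ correspond bijectively to measurable $k$-colorings $f\colon V(G)\to[k]$ via $f(x)=j\Leftrightarrow x\in P_j$, and under this correspondence the joint distribution $\mathcal{D}_G(v_1,\dots,v_k)$ encodes exactly the same data as $\tau_1(G,f)$. Indeed, $(v_jG)(x)$ counts the number of neighbors of $x$ lying in $P_j$, so $\mathcal{D}_G(v_1,\dots,v_k)$ is supported on the finite set $F_{k,d}:=\{e_1,\dots,e_k\}\times\{0,1,\dots,d\}^k\subset\mathbb{R}^{2k}$ and records, for a uniformly random vertex $x$, its color together with the count vector $(n_1,\dots,n_k)$ of neighbor colors. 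These finite data — root color plus neighbor color multiplicities — are precisely a complete isomorphism invariant for colored rooted stars of radius $1$ in $\mathcal{G}_{d,k,1}$, so a fixed set bijection $F_{k,d}\to\mathcal{G}_{d,k,1}$ depending only on $k,d$ induces a canonical bijection $\Phi_{k,d}\colon\mathcal{S}'_k(G)\to Z_{k,1}(G)$ of the profiles.

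Finally, since $F_{k,d}$ is finite with minimum inter-point Euclidean distance at least $1$ and $\mathcal{G}_{d,k,1}$ is finite, the L\'evy--Prokhorov metric on $\mathcal{P}(F_{k,d})$ and the total variation metric on $\mathcal{P}(\mathcal{G}_{d,k,1})$ transported through $\Phi_{k,d}$ are bi-Lipschitz equivalent with constants depending only on $k$ and $d$; the induced Hausdorff distances are then equivalent with the same constants, yielding the desired equivalence of convergence notions. The main obstacle is not in this last step — which is elementary once set up — but in the conceptual match: Theorem~\ref{partthm} is what lets me reduce to $\{0,1\}$-valued test functions (without it I would be stuck with arbitrary $L^\infty_{[-1,1]}$-valued $v_j$ and no candidate bijection), and Theorem~\ref{starconv} is what lets me restrict to radius $r=1$ (without it I would have to match $\mathcal{S}'_k$ to the strictly finer $Z_{k,r}$ for all $r$, which is impossible because partition profiles only see one application of $G$). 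Once these two black-box inputs are in place, the remaining argument is a finite combinatorial translation.
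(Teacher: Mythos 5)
Your argument is essentially the paper's own proof: you invoke Theorem~\ref{partthm} to pass to partition profiles $\mathcal{S}'_k$, invoke Theorem~\ref{starconv} to restrict to radius $1$, and match $\mathcal{S}'_k(G)$ with $Z_{k,1}(G)$ through the bijection between function partitions and colorings, exactly as the paper does with its set $M_{d,k}$ and the maps $\alpha,\hat\alpha$. The only slip is that your $F_{k,d}=\{e_1,\dots,e_k\}\times\{0,\dots,d\}^k$ is strictly larger than $\mathcal{G}_{d,k,1}$ (the constraint $\sum_j n_j\le d$ is missing, which is exactly what the paper's $M_{d,k}$ imposes), so the stated set map is a bijection only after restricting to that admissible subset; this does not affect the argument since the profile measures are supported there.
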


\begin{proof} We need some preparation. Let $M_{d,k}$ denote the set of vectors $v=(v_1,v_2,\dots,v_{2k})$ in $\mathbb{N}_0^{[2k]}$ (where $\mathbb{N}_0=\mathbb{N}\cup\{0\}$) such that $\sum_{i=1}^k v_i=1$ and $\sum_{i=k+1}^{2k}v_i\leq d$. There is a natural bijection $\alpha$ between $M_{d,k}$ and $\mathcal{G}_{d,k,1}$ given in the following way. For a vector $v\in M_{d,k}$ let $q(v)$ denote the unique coordinate $i\in [k]$ with $v_i=1$ and let $s(v):=\sum_{i=k+1}^{2k}v_i$.  We denote by $\alpha(v)$ the colored star in which the color of the  root is $q(v)$, the root has $s(v)$ neighbors, and for the neighbors of $o$ the color $i\in[k]$ is used $v_{i+k}$ times. It is clear that the isomorphism type $\alpha(v)$ is determined by this information and each isomorphism type in $\mathcal{G}_{d,k,1}$ is obtained this way. Consequently $\alpha$ is a bijection. We denote by $\hat{\alpha}$ the bijection between the sets of probability measures $\mathcal{P}(M_{d,k})$ and $\mathcal{P}(\mathcal{G}_{d,k,1})$ induced by $\alpha$ using the formula $\hat{\alpha}(\mu)(T):=\mu(\alpha^{-1}(T))$. It is clear that $\hat{\alpha}$ is continuous with respect to the given metrization on $\mathcal{P}(M_{d,k})$ and $\mathcal{P}(\mathcal{G}_{d,k,1})$.

Observe that if $G$ is a graphing (of maximal degree $d$), then $\mathcal{S}'_k(G)=\mathcal{S}_k(G)\cap \mathcal{P}(M_{d,k})$. Now we show that $\hat{\alpha}(\mathcal{S}'_k(G))=Z_{k,1}(G)$. To see this, notice that colorings $f:X\to [k]$ are in a one-to-one correspondence with systems of $0-1$-valued functions $\{v_i\}_{i=1}^k$ with $\sum_{i=1}^k v_i=1_X$, called function partitions. The correspondence is given by $v_i=1_{f^{-1}(i)}$. It is clear that if $\{v_i\}_{i=1}^\infty$ is a function partition, then $$\hat{\alpha}(\mathcal{D}_G(\{v_i\}_{i=1}^k))=\tau_1(G,f).$$ We obtain that $\hat{\alpha}(\mathcal{S}'_k(G))=\hat{\alpha}(\mathcal{S}_k(G)\cap\mathcal{P}(M_{d,k}))=Z_{k,1}(G).$  Now the continuity of $\hat{\alpha}$ and Theorem \ref{partthm} finish the proof.
\end{proof}




\section{Generalizations}

Action convergence is  based on a very general principle. We do not exploit the generality of it in this paper, but as illustration we describe a few useful generalizations.

\noindent{\bf Complex spaces.}~The theory developed in this paper can be generalized to operators acting on complex number valued function spaces. Most of the definitions are the same and the proofs of the theorems require some minor changes. Note also that if a $P$-operator $A$ over $\mathbb{C}$ has the special property that it takes real valued functions to real valued functions, then its $k$-profile over $\mathbb{C}$-valued functions can be reconstructed from its $2k$-profile over $\mathbb{R}$ by decomposing functions according to real and complex part. 

\noindent{\bf Simultaneous convergence.}~We have mentioned in the introduction that it is sometimes useful to introduce simultaneous convergence of pairs $(A,f)$, where $A\in\mathcal{B}(\Omega)$ is a $P$-operator and $f$ is measurable on $\Omega$. Based on the same principle one can further generalize this to a simultaneous convergence notion of several $P$-operators and several functions. An especially interesting case is when matrices and their adjoint matrices are considered simultaneously. In this case $\mathcal{S}_k(A)$ is defined by
$$\mathcal{D}(v_1,v_2,\dots,v_k,v_1A,v_2A,\dots,v_kA,v_1A^*,v_2A^*,\dots,v_kA^*).$$ It is not clear whether this leads to a finer convergence notion of matrices or not.

\noindent{\bf Non-linear operators.}~In the definition of the metric $d_M$ we never use the linearity of the operators. The definition of $k$-profile and distance $d_M(A,B)$ is meaningful for arbitrary functions $A:L^\infty(\Omega_1)\to L^1(\Omega_1)$ and $B:L^\infty(\Omega_2)\to L^1(\Omega_2)$. (Even multivalued functions can be allowed.) Interesting examples for non-linear operators are finite matrices composed pointwise with non-linear functions. For example: $(x,y,z)A:=((x+y)^2, \sin(y+z),z-x)$. Such functions arise in deep learning.

\section{Random matrices}

In this section we investigate the convergence of certain dense random matrices with respect to the metric $d_M$. We consider a sequence of normalized random matrices $(H_n)$ with independent zero-mean $\pm n^{-1/2}$-valued random variables as entries. This is the same as if we choose an element uniformly at random from the the set of all $n\times n$ matrices with $\pm n^{-1/2}$ entries. This set will be denoted by $\mathcal M_n$. Our goal is to prove the following. 

\begin{proposition}\label{prop:wn} For every infinite $S'\subseteq \mathbb N$ there exists  a $P$-operator $A$ and an infinite set  $S\subseteq S'$   such that the sequence $(H_j)_{j\in S}$ converges to $A$ with respect to $d_M$ with probability $1$. 
\end{proposition}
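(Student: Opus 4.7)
The plan is to combine three ingredients: an almost sure uniform norm bound, a concentration-of-measure estimate for the random profiles $\mathcal{S}_k(H_n)$, and diagonal extractions exploiting the compactness results already established.

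First I would verify that the sequence $(H_n)$ is almost surely uniformly bounded as a sequence of $P$-operators. Since $\|H_n\|_{\infty\to 1}\leq\|H_n\|_{2\to 2}$, and the operator norm of such a normalized $\pm 1$ matrix tends to $2$ almost surely (F\"uredi--Koml\'os / Bai--Yin), there exists a random constant $c<\infty$ with $\sup_n\|H_n\|_{\infty\to 1}\leq c$ almost surely; by passing to an event of full measure, we may assume the bound is deterministic. By Lemma \ref{limitlem2}, each $\mathcal{S}_k(H_n)$ then lies in the compact metric space $(\mathcal{Q}_c(\mathbb{R}^{2k}),d_H)$ from Lemma \ref{limitlem1}.

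The heart of the argument is a concentration claim: for each fixed $k$, there is a deterministic sequence $Y_{n,k}\in\mathcal{Q}_c(\mathbb{R}^{2k})$ (for example a median of the random set $\mathcal{S}_k(H_n)$ with respect to $d_H$) such that $d_H(\mathcal{S}_k(H_n),Y_{n,k})\to 0$ in probability. Lemma \ref{limlem3} supplies the Lipschitz input: flipping one entry of $H_n\in\mathcal{M}_n$ changes $\|H_n\|_{\infty\to 1}$ by at most $2n^{-3/2}$, and hence changes $d_H(\mathcal{S}_k(H_n),Y)$ by at most $O_k(n^{-3/4})$ for any fixed $Y$. A direct McDiarmid bound on $H\mapsto d_H(\mathcal{S}_k(H),Y)$ is not strong enough at constant scale, since $n^2\cdot (n^{-3/4})^2=n^{1/2}$. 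The necessary refinement comes from replacing the supremum over the infinite family of test tuples $(v_1,\dots,v_k)\in L^\infty_{[-1,1]}([n])^k$ by a finite $\varepsilon$-net (suitable for each fixed $n$), applying a sharper Talagrand-type concentration inequality on the Rademacher cube $\{\pm n^{-1/2}\}^{n^2}$ to each of the bounded empirical functionals $n^{-1}\sum_j\varphi(v_{i,j},(v_iH_n)_j)$ for a countable metrising family of bounded continuous $\varphi$ on $\mathcal{P}_c(\mathbb{R}^{2k})$, and then taking a union bound over the net. This is the main obstacle of the proof.

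Granting the concentration statement, the conclusion follows by a double diagonal argument. By compactness of $(\mathcal{Q}_c(\mathbb{R}^{2k}),d_H)$, for each $k$ the deterministic sequence $\{Y_{n,k}\}_{n\in S'}$ has a convergent subsequence, and diagonalising over $k\in\mathbb{N}$ produces $S''\subseteq S'$ along which $Y_{n,k}\to X_k$ in $d_H$ for every $k$. Combined with the concentration, $d_H(\mathcal{S}_k(H_n),X_k)\to 0$ in probability along $S''$ for every $k$. Since convergence in probability passes to almost sure convergence along a subsequence, a further diagonalisation over $k$ yields $S\subseteq S''$ such that, almost surely, $d_H(\mathcal{S}_k(H_n),X_k)\to 0$ simultaneously for every $k\in\mathbb{N}$. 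Along $S$ the sequence $\{H_n\}_{n\in S}$ is then action convergent almost surely in the sense of Definition \ref{limconv} and uniformly $(2,2)$-bounded, and Theorem \ref{exlim} supplies a $P$-operator $A$---deterministic, since its profiles $X_k$ are---with $d_M(H_n,A)\to 0$ almost surely along $S$, which is the desired statement.
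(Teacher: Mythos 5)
Your high-level architecture (a.s.\ norm bound, concentration of the random profiles around deterministic centers, then two diagonalizations plus Theorem~\ref{exlim}) matches the paper's strategy. But you have identified the concentration step as ``the main obstacle'' and left it unresolved, and this is where the gap is: your proposed fix via an $\varepsilon$-net plus Talagrand on the Rademacher cube is substantially more elaborate than what is actually needed, and you never carry it out, so the argument as written is incomplete.

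The key point you are missing is that one should not estimate the Lipschitz constant by passing through $\|A-B\|_{\infty\to 1}$ and Lemma~\ref{limlem3}; that route is lossy, as you correctly computed. Instead, compare empirical measures directly. If $A,B\in\mathcal M_n$ differ in a single \emph{column}, then for any test functions $v_1,\dots,v_k$, the two random vectors $\bigl(v_1(j),\dots,v_k(j),(v_1A)(j),\dots,(v_kA)(j)\bigr)$ and the same expression with $B$ agree for every $j$ except the index of the changed column, i.e.\ they agree on an event of probability $1-1/n$ under the uniform choice of $j\in[n]$. By the definition of the L\'evy--Prokhorov metric this immediately gives $d_{\rm LP}\bigl(\mathcal D_A(v_1,\dots,v_k),\mathcal D_B(v_1,\dots,v_k)\bigr)\leq 1/n$, uniformly in $k$ and in the $v_i$'s, hence $d_H(\mathcal S_k(A),\mathcal S_k(B))\leq 1/n$ and $d_M(A,B)\leq 1/n$. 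Filtering by columns, the Doob martingale $Y_j=\mathbb E\bigl(d_M(M_n,H_n)\mid\mathcal F_j\bigr)$ has increments $|Y_j-Y_{j-1}|\leq 1/n$, so Azuma gives $\mathbb P\bigl(|d_M(M_n,H_n)-\mathbb E\, d_M(M_n,H_n)|>\eta\bigr)\leq 2\exp(-\eta^2 n/2)$ for any fixed reference matrix $M_n$. This replaces your entire Talagrand/net machinery with a one-line coupling observation. The paper then produces the deterministic centers $M'_{\varepsilon,n}$ by covering the compact space $\mathcal X_{2,2,3}$ with $F(\varepsilon)$ balls of radius $\varepsilon/8$, choosing in each ball the closest element of $\mathcal M_n$, and pigeonholing: one of these at most $F(\varepsilon)$ matrices captures probability at least $(1-\mathbb P(\|H_n\|_2>3))/F(\varepsilon)$, which combined with the concentration forces $\mathbb E\, d_M(M'_{\varepsilon,n},H_n)\leq\varepsilon/2$. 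Your proposal to use a median of $\mathcal S_k(H_n)$ would also serve this purpose, but without the correct column-wise Lipschitz estimate the concentration around that median cannot be established, so the proof does not go through as written.
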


We start with a statement on the concentration of measure. 

\begin{lemma} \label{lem:azuma} For every $n\in \mathbb N$, let $M_n\in \mathcal M_n$ be fixed, and $H_n$ be a uniformly chosen element of $\mathcal M_n$. Then for every $\eta>0$,  we have  
\[\lim_{n\rightarrow\infty}\mathbb P\big(\big|d_M(M_n, H_n)-\mathbb E(d_M(M_n, H_n))\big|>\eta\big)=0.\]
\end{lemma}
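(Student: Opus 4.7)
The plan is to apply the Azuma--Hoeffding inequality to the Doob martingale obtained by revealing the columns of $H_n$ one at a time. Enumerating the columns of $H_n$ as $H^{(1)}, \ldots, H^{(n)}$, set $\mathcal{F}_i := \sigma(H^{(1)}, \ldots, H^{(i)})$ and $Z_i := \mathbb{E}[d_M(M_n, H_n) \mid \mathcal{F}_i]$; then $(Z_i)_{i=0}^n$ is a martingale with $Z_0 = \mathbb{E}(d_M(M_n, H_n))$ and $Z_n = d_M(M_n, H_n)$, and the standard bounded-differences argument reduces the problem to estimating $|d_M(M_n, H) - d_M(M_n, H')|$ for matrices $H, H' \in \mathcal{M}_n$ differing in a single column.

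The key computation is that if $H, H'$ differ only in column $j_0$, then $d_M(H, H') \leq 1/n$. Indeed, fix $k \in \mathbb{N}$ and vectors $v_1, \ldots, v_k \in L^\infty_{[-1,1]}([n])$. Because $H$ and $H'$ agree in every column other than $j_0$, the functions $v_\ell H$ and $v_\ell H'$ agree at every coordinate $j \neq j_0$. Consequently the empirical measures $\mathcal{D}_H(v_1, \ldots, v_k)$ and $\mathcal{D}_{H'}(v_1, \ldots, v_k)$ on $\mathbb{R}^{2k}$ consist of $n$ atoms of mass $1/n$ each with all but one atom in common; only the atom indexed by $j_0$ is moved (in the last $k$ coordinates). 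A direct inspection of the L\'evy--Prokhorov definition shows that if two probability measures differ by moving one atom of mass $p$, their $d_{\rm LP}$-distance is at most $p$: setting $\varepsilon := p$, for any Borel $U$ one has $\mu(U) - \nu(U^\varepsilon) \leq p = \varepsilon$ because the only atom whose contributions to $\mu(U)$ and $\nu(U^\varepsilon)$ can disagree has mass $p$. Applying this with $p = 1/n$ and taking the supremum over $v_1, \ldots, v_k$ yields $d_H(\mathcal{S}_k(H), \mathcal{S}_k(H')) \leq 1/n$ for every $k$, and hence $d_M(H, H') \leq \sum_{k=1}^\infty 2^{-k}/n = 1/n$.

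By the triangle inequality $|Z_i - Z_{i-1}| \leq 1/n$ for every $i = 1, \ldots, n$, so Azuma--Hoeffding produces
\[
\mathbb{P}\bigl(|d_M(M_n, H_n) - \mathbb{E}(d_M(M_n, H_n))| > \eta\bigr) \leq 2 \exp\!\Bigl(-\tfrac{\eta^2}{2 n (1/n)^2}\Bigr) = 2 \exp\!\Bigl(-\tfrac{n \eta^2}{2}\Bigr),
\]
which tends to zero as $n \to \infty$. The main obstacle this plan overcomes is the choice of exposure: a naive entry-wise Doob martingale has $n^2$ steps whose per-step increment is still bounded only by $1/n$ (again dictated by the mass of the single moving atom), giving $\sum c_i^2 = \Theta(1)$ and only a constant Azuma bound. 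Going through column exposure exploits the fact that one column of $H$ controls exactly one atom of every empirical measure $\mathcal{D}_H(v_1, \ldots, v_k)$, cutting the martingale length from $n^2$ to $n$ while preserving the per-step bound, and producing genuine decay in the concentration inequality.
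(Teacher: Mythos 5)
Your proof is correct and follows essentially the same route as the paper's: a column-exposure Doob martingale, the observation that changing one column moves a single atom of mass $1/n$ in every profile measure (hence $d_{\rm LP}\leq 1/n$ and $d_M\leq 1/n$), and Azuma's inequality. You simply spell out the L\'evy--Prokhorov estimate and the $\sum_k 2^{-k}$ summation that the paper leaves implicit, and add a helpful (but non-essential) remark explaining why entry-by-entry exposure would not give a vanishing bound.
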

\begin{proof} For $1\leq j\leq n$, let $\mathcal F_j$ be the $\sigma$-algebra generated by the first $j$ columns of $H_n$. We apply the well-known concentration inequalities for the martingale $Y_j=\mathbb E\big(d_M(M_n, H_n)\big|\mathcal F_j\big)$. Notice that if matrices $A, B\in \{-1,1\}^{n\times n}$ differ only in a single column, then the distance of $\mathcal D_{A}(v_1, \ldots, v_k)$ and $\mathcal D_{B}(v_1, \ldots, v_k)$ is at most $1/n$ in the L\'evy--Prokhorov metric (for arbitrary $k$ and vectors $v_j$), because the two measures coincide everywhere except on an event of probability $1/n$.  This implies  $d_M(A, B)\leq 1/n$. Hence  $|Y_j-Y_{j-1}|\leq 1/n$ holds for every $j=1, 2, \ldots, n$. Therefore by Azuma's inequality we have that 
\[\mathbb P\big(\big|d_M(M_n, H_n)-\mathbb E(d_M(M_n, H_n))\big|>\eta\big)\leq 2\exp\bigg(-\frac{\eta^2}{2n\cdot (1/n)^2}\bigg)=2\exp(-\eta^2 n/2).\qedhere\]  
\end{proof}

\begin{lemma} \label{lem:mn} There exists a sequence of matrices $(M_j)_{j\in \mathbb N}$ such that the following conditions hold: $(i)$ $M_j\in \mathcal M_j\cap \mathcal X_{2,2,3}$ for every $j\in \mathbb N$; $(ii)$ $d_M(M_j, H_j)\rightarrow 0$ in probability as $j\rightarrow\infty$, where $H_j$ is a uniformly chosen random element of $\mathcal M_j$. 
\end{lemma}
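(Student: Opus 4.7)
\emph{Strategy.} My plan is to show that the laws $\mu_n:=\operatorname{Law}([H_n])$, regarded as Borel probability measures on the compact metric space $(\mathcal{X}_{2,2,3},d_M)$ (compact by Theorem \ref{thm:compact}), concentrate asymptotically on a single point, and then to pick $M_j$ as a $\pm 1/\sqrt j$ matrix whose equivalence class lies near that point. The only external inputs are Theorem \ref{thm:compact}, the classical fact $\|H_n\|_{2\to 2}\to 2$ in probability so that $\mathbb{P}(H_n\in\mathcal{X}_{2,2,3})\to 1$, and the bounded-difference inequality exploited in Lemma \ref{lem:azuma}.

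\emph{Pairwise concentration.} Let $H_n$ and $H_n'$ be independent uniform elements of $\mathcal{M}_n$. Viewing $d_M(H_n,H_n')$ as a function of the $2n$ columns of the pair $(H_n,H_n')$, changing any one column alters $d_M$ by at most $1/n$ (the same $1/n$-coupling on empirical distributions that underlies Lemma \ref{lem:azuma}). Azuma's inequality therefore yields $\operatorname{Var}(d_M(H_n,H_n'))=O(1/n)$, so
\[
d_M(H_n,H_n')-c_n\to 0 \text{ in probability},\qquad c_n:=\mathbb{E}\,d_M(H_n,H_n').
\]

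\emph{Subsequential limits are Dirac.} By Prokhorov, $\{\mu_n\}$ is tight and any subsequence admits a weakly convergent refinement $\mu_{n_k}\to\mu^*$. Because $d_M$ is continuous and bounded on $\mathcal{X}_{2,2,3}\times\mathcal{X}_{2,2,3}$, weak convergence gives $c_{n_k}\to\iint d_M\,d\mu^*\otimes d\mu^*$, while $(H_{n_k},H_{n_k}')$ converges weakly to $(X,X')$ iid $\sim\mu^*$, so $d_M(H_{n_k},H_{n_k}')$ converges in distribution to $d_M(X,X')$. Combined with the previous paragraph, $d_M(X,X')$ is $\mu^*\otimes\mu^*$-almost surely equal to a constant. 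If $\mu^*$ charged two distinct points $x_0,y_0$, both $(x_0,x_0)$ and $(x_0,y_0)$ would lie in $\operatorname{supp}(\mu^*\otimes\mu^*)$, on arbitrarily small neighborhoods of which the continuous function $d_M$ takes the distinct values $0$ and $d_M(x_0,y_0)>0$, a contradiction. Hence $\mu^*=\delta_{A^*}$ for some $A^*\in\mathcal{X}_{2,2,3}$.

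\emph{Selection and main obstacle.} For every $\epsilon>0$ let $A_n^\epsilon\in\mathcal{X}_{2,2,3}$ maximize $\mu_n(B_{d_M}(\cdot,\epsilon))$. If $\mu_n(B_{d_M}(A_n^\epsilon,\epsilon))\not\to 1$ along some subsequence, the Dirac conclusion applied to a further refinement contradicts the assumed gap. Thus for each $\epsilon>0$ we have $\mu_n(B_{d_M}(A_n^\epsilon,\epsilon))\to 1$, and a routine diagonal selection yields $\epsilon_n\to 0$ and $A_n^\star\in\mathcal{X}_{2,2,3}$ with $\mu_n(B_{d_M}(A_n^\star,\epsilon_n))\geq 1-\epsilon_n$ for all large $n$. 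Since $\mathbb{P}(H_n\in\mathcal{X}_{2,2,3})\to 1$, the intersection $B_{d_M}(A_n^\star,\epsilon_n)\cap\mathcal{M}_n\cap\mathcal{X}_{2,2,3}$ is non-empty for all large $n$; pick $M_n$ from it, and choose any sign matrix with $\|\cdot\|_{2\to 2}\leq 3$ for the finitely many remaining indices. The triangle inequality then gives
\[
\mathbb{P}(d_M(M_n,H_n)>2\epsilon_n)\leq \mathbb{P}(d_M(A_n^\star,H_n)>\epsilon_n)+\mathbb{P}(H_n\notin\mathcal{X}_{2,2,3})\to 0,
\]
establishing (ii), while (i) holds by construction. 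The conceptual obstacle is the Dirac conclusion: one must port the single-column $1/n$-Lipschitz property of $d_M$ from Lemma \ref{lem:azuma} to the joint column variables of two independent matrices. This transports verbatim because the column-swap coupling acts on one coordinate of the pair at a time.
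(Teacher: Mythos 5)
Your proposal is correct, and it proves the lemma by a genuinely different route from the paper. The paper's argument works with one fixed matrix $M$ at a time: compactness of $\mathcal{X}_{2,2,3}$ yields a finite $\varepsilon/8$-net of size $F(\varepsilon)$, by rounding the net centers into $\mathcal{M}_n$ one gets an $\varepsilon/4$-net of size at most $F(\varepsilon)$, pigeonhole gives some $M'_{\varepsilon,n}$ in this net with $\mathbb{P}(d_M(M'_{\varepsilon,n},H_n)\le\varepsilon/4)\ge(1-o(1))/F(\varepsilon)$, and Lemma \ref{lem:azuma} (concentration of $d_M(M'_{\varepsilon,n},H_n)$ around its mean) then forces the mean below $\varepsilon/2$ and the tail beyond $\varepsilon$ to vanish; a diagonalization finishes. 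You instead push the Azuma/bounded-difference estimate onto the two-copy random variable $d_M(H_n,H_n')$, combine it with Prokhorov compactness (again via Theorem \ref{thm:compact}) to show that every weak subsequential limit of $\mathrm{Law}([H_n])$ must be a Dirac mass, and then select $M_n$ as a sign matrix carrying most of the mass in a shrinking ball. Both proofs rest on the same two inputs (compactness of $\mathcal{X}_{2,2,3}$ and the $1/n$-per-column Lipschitz property of $d_M$), but yours replaces the explicit finite-net/pigeonhole bookkeeping with a softer second-moment/weak-convergence argument; the paper's version is more elementary and quantitative, yours is shorter to state once the Dirac dichotomy is in place.

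Two small points to tighten: first, $[H_n]$ need not lie in $\mathcal{X}_{2,2,3}$, so $\mu_n$ as written is not a measure on that compact space; you should condition on the event $\{\|H_n\|_{2\to 2}\le 3\}$ (or restrict and renormalize), which by the Geman bound costs only an $o(1)$ total-variation error and does not affect the concentration or weak-convergence steps. Second, the existence of an exact maximizer $A_n^\epsilon$ of $A\mapsto\mu_n(B_{d_M}(A,\epsilon))$ is not immediate (the map is only lower semicontinuous for open balls); an $\eta$-near-maximizer with $\eta\to 0$, or closed balls, serves equally well and avoids the issue.
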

\begin{proof} Given $\varepsilon>0$, first we find a sequence  of matrices around which random matrices are concentrated with error $\varepsilon$. 
The metric space $(\mathcal X_{2,2,3}, d_M)$ is compact by Theorem \ref{thm:compact}, hence it contains a finite $\varepsilon/8$-net. We denote the size of this net by  $F(\varepsilon)$.  Consider balls of radius $\varepsilon/8$ around the elements of this net. Let $\mathcal N_{\varepsilon,n}$ be the set of matrices  satisfying the following property: in one of these balls, it is the closest element of $\mathcal M_n$ to the center (in case of equality, choose one arbitrarily). Then $\mathcal N_{\varepsilon, n}$ is an $\varepsilon/4$-net in $\mathcal M_n\cap \mathcal X_{2,2,3}$, and its size is at most $F(\varepsilon)$, as we have chosen at most one element from each ball. It follows that there exists $M'_{\varepsilon, n}\in \mathcal N_{\varepsilon,n}$ such that
\[\mathbb P\big(d_M(M'_{\varepsilon, n}, H_n)\leq \varepsilon/4\big)\geq \frac{1-\mathbb P(\|H_n\|_2>3)}{F(\varepsilon)}.\]  

Since the operator norm of our random matrix $H_n$ random matrix is concentrated around its expectation $2$ (see e.g.\ \cite{geman}), the probability  $\mathbb P(\|H_n\|_2>3)$ tends to $0$ as $n$ goes to infinity. Therefore for every $\varepsilon>0$, we have 
\[\liminf_{n\rightarrow\infty}\mathbb P\big(d_M(M'_{\varepsilon, n}, H_n)\leq \varepsilon/4)>0.\]
This equation together with Lemma \ref{lem:azuma} for $\eta=\varepsilon/4$ and $(M'_{\varepsilon, n})_{n\in \mathbb N}$ implies that 
\[\mathbb E\big(d_M(M'_{\varepsilon,n}, H_n)\big)\leq \frac{\varepsilon}{2}.\]
By combining this with  Lemma \ref{lem:azuma} for $\eta=\varepsilon/2$, we conclude that 
\[\lim_{n\rightarrow\infty}\mathbb P\big(d_M(M'_{\varepsilon,n}, H_n)>\varepsilon\big)=0.\]
The proof can be completed by a standard diagonalization argument. More precisely, we can choose a function $n_0(\varepsilon)$ such that 
\[\mathbb P\big(d_M(M'_{\varepsilon,n}, H_n)>\varepsilon\big)<\varepsilon \text{ holds for every } n\geq n_0(\varepsilon).\]
Now let $k(n)=\max\{k: n_0(1/k)<n\}$. Then the sequence $M_j=M'_{1/k(j),j}$ satisfies the conditions of the lemma. 
\end{proof}

\begin{proof}[Proof of Proposition \ref{prop:wn}] Let $(M_j)_{j\in \mathbb N}$ be a sequence of matrices satisfying the conditions of Lemma \ref{lem:mn}. By this lemma, we can choose an infinite subset $S\subseteq S'$ such that $(d_M(M_j, H_j))_{j\in S}$ tends to $0$ with probability $1$ as $j\rightarrow\infty$, and $(M_j)_{j\in S}$ converges to a $P$-operator $A$ with respect to $d_M$. To guarantee the second condition, we can use Lemma \ref{lem:sc} and Theorem \ref{exlim}, because $M_j\in \mathcal X_{2,2,3}$ for all $j$. This $S$ will be an appropriate subset of $S'$.
\end{proof}

\section{Examples}\label{chapex}

\subsection{Hypercubes and uniform towers}

The hypercube graph $Q_n=(V(Q_n),E(Q_n))$ is formed by the vertices and edges of the $n$-dimensional hypercube. More precisely, $V(Q_n)=\{0,1\}^n$ and two vertices are connected if and only if the representing vectors have Hamming distance one, i.e. they differ at exactly one coordinate. The graph $Q_n$ is $n$-regular, $|V(Q_n)|=2^n$ and $|E(Q_n)|=2^{n-1}n$. This means that  the sequence $\{Q_n\}_{n=1}^\infty$ is very sparse but not with bounded degrees. 
Note that $Q_n$ is a Cayley graph of the group $Z_2^n$ where $\{0,1\}$ is identified with the cyclic group $Z_2$ of order $2$ and the generators are the basis vectors $e_i, i\in [n]$ with $1$ at the $i$-th coordinate and $0$ elsewhere. 

\noindent{\it Our goal is to show that hypercubes converge to an appropriate Cayley graph of the compact group $Z_2^\infty$ with a carefully chosen topological basis. }

A topological basis is an independent set of vectors in $Z_2^\infty$ that generates a dense set in $Z_2^\infty$. (Note that topological independence is not assumed here.) Quite surprisingly the usual topological basis $\{e_i\}_{i=1}^\infty$ of $Z_2^\infty$ is not useful for constructing the limit of the hypercubes. The main obstacle is that $\{e_i\}_{i=1}^\infty$ is a countable set but there is no natural uniform distribution on an infinite countable set. Instead we need to find a nice enough topological basis with uncountable many elements and a natural uniform distribution on this basis.

Since $Q_n$ is regular, we have that adjacency operator convergence is equivalent to random walk convergence, so we do not have to choose one of them. The right scaling of the sequence is  $A_n:=A(Q_n)/n$, where $A(Q_n)$ is the adjacency matrix of $Q_n$. The operator $A_n$ is a Markov graphop, and if $\{A_n\}_{n=1}^\infty$ is convergent, then the limit is also a Markov graphop (recall Theorem \ref{markcomp}). As we stated above, the purpose of this part of the paper is to show that they indeed converge and to determine the limit object. Some details will be left to the reader regarding the general convegence. We will work with the subsequence $\{Q_{2^n}\}_{i=1}^\infty$ that has especially nice properties based on certain  uniform mappings between $Q_{2^{n+1}}$ and $Q_{2^n}$. The general convergence can be obtained from approximate versions of these uniform maps. 

\begin{figure}[htbp] 

  \centering
       \includegraphics[scale=0.45]{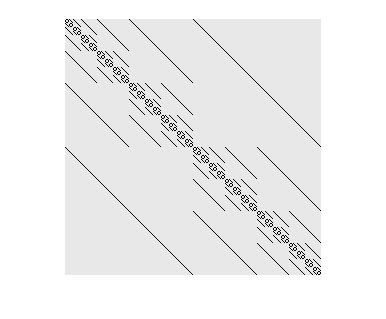}  \includegraphics[scale=0.45]{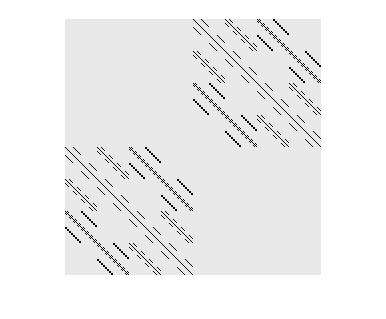}
  \caption{Two representations of the adjacency matrix of the hypercube in dimension 8}\label{figcube}
 \end{figure}

On Figure \ref{figcube} we show the adjacency matrix of the $8$ dimensional hypercube $Q_8$ using two different orderings of the vertices. Light gray points represent zeros and black points represent ones. The first ordering is based on the binary forms of the numbers $0,1,2,\dots,255$, which is a rather natural way to order $\{0,1\}^8$. On the second figure we compose this ordering with a carefully chosen automorphism of the group $Z_2^8$. Quite surprisingly it turns out that the second figure provides a more useful representation when going to the limit. There is a qualitative difference between the two types of representations of $Q_8$. Intuitively, the first pictures would converge to some "infinite picture", where each vertical (and horizontal) line has countable intersection with the black points. On the other hand the second figure fits into a sequence such that, after going to the limit, vertical (and horizontal) lines have uncountable intersections with the black points. We will see later that this helps in putting a uniform distribution on the limiting picture.

We will need the following definition. 

\begin{definition}{\bf (Uniform map and uniform tower)} Let $G_1,G_2$ be graphs. A map $f:V(G_2)\to V(G_1)$ is $(a,b)$-uniform if 
\begin{enumerate}
\item $f$ is a graph homomorphism, i.e. $(f(v),f(w))\in E(G_1)$ holds for every $(v,w)\in E(G_2)$.  
\item $|f^{-1}(v)|=a$ holds for every $v\in V(G_1)$.
\item If $v\in V(G_2)$ and $w$ is any neighbor of $f(v)$, then exactly $b$ neighbors of $v$ are mapped to $w$.
\end{enumerate}
A uniform tower is a sequence $\{G_i,f_i\}_{i=1}^\infty$ of finite graphs $G_i$ and maps $f_i:V(G_{i+1})\to V(G_i)$ such that $f_i$ is $(a_i,b_i)$-uniform for $i\in\mathbb{N}$.
\end{definition}

\begin{lemma}\label{hyplem1} Let $G_1,G_2$ be finite graphs and let $f:V(G_2)\to V(G_1)$ be an $(a,b)$-uniform map for some $a,b\in\mathbb{N}$. Then $A(G_1)\prec A(G_2)/b$.
\end{lemma}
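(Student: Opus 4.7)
Recall that $A(G_1) \prec A(G_2)/b$ means $\mathcal{S}_k^*(A(G_1)) \subseteq \mathcal{S}_k^*(A(G_2)/b)$ for every $k \in \mathbb{N}$. I will show the stronger statement $\mathcal{S}_k(A(G_1)) \subseteq \mathcal{S}_k(A(G_2)/b)$ by exhibiting, for any admissible test system $v_1, \dots, v_k \in L^\infty_{[-1,1]}(V(G_1), \mu_{V(G_1)})$, a matching test system on $V(G_2)$ that yields the same joint distribution. The natural candidate is the pullback $w_i := v_i \circ f \in L^\infty_{[-1,1]}(V(G_2), \mu_{V(G_2)})$.

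\textbf{Key computation.} The main point is an exact intertwining identity: for every $v \in L^\infty(V(G_1))$ and every $y \in V(G_2)$,
\[
\bigl((v\circ f)\,A(G_2)\bigr)(y) \;=\; \sum_{(x,y)\in E(G_2)} v(f(x)) \;=\; \sum_{(u,f(y))\in E(G_1)} \#\{x : (x,y)\in E(G_2),\, f(x)=u\}\, v(u).
\]
By the third clause in the definition of $(a,b)$-uniformity, the inner cardinality equals $b$ for every neighbor $u$ of $f(y)$ in $G_1$, so the expression collapses to $b\cdot \bigl(vA(G_1)\bigr)(f(y))$. Hence
\[
(v\circ f)\,\bigl(A(G_2)/b\bigr) \;=\; \bigl(v A(G_1)\bigr)\circ f.
\]

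\textbf{Matching the distributions.} By the second clause of $(a,b)$-uniformity, $f$ is $a$-to-one, and $|V(G_2)|=a|V(G_1)|$; so the pushforward of $\mu_{V(G_2)}$ under $f$ equals $\mu_{V(G_1)}$, i.e.\ $f$ is measure-preserving. Combining this with the intertwining identity applied to each $v_i$, the joint distribution
\[
\mathcal{D}\bigl(w_1,\dots,w_k,\, w_1(A(G_2)/b),\dots,w_k(A(G_2)/b)\bigr)
\]
on $V(G_2)$ coincides with
\[
\mathcal{D}\bigl(v_1,\dots,v_k,\, v_1 A(G_1),\dots,v_k A(G_1)\bigr)
\]
on $V(G_1)$, since both are the pushforward of $\mu_{V(G_1)}$ under the same $\mathbb{R}^{2k}$-valued map. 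Thus $\mathcal{D}_{A(G_1)}(v_1,\dots,v_k) = \mathcal{D}_{A(G_2)/b}(w_1,\dots,w_k) \in \mathcal{S}_k(A(G_2)/b)$, proving the inclusion and hence weak containment.

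\textbf{Obstacle check.} The only thing to verify is that the third uniformity clause really gives the exact factor $b$ \emph{for every} neighbor $u$ of $f(y)$ in $G_1$, not only those in the image of the edge-neighborhood of $y$; but this is precisely the content of that clause (applied with $w=u$), so there is no genuine difficulty. The argument is entirely deterministic, requires no normalization beyond the $1/b$ factor, and works verbatim for all $k$.
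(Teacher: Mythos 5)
Your proof is correct and follows essentially the same route as the paper's: pull back each test function along $f$, use uniformity to show the pullback is measure-preserving and that $(v\circ f)A(G_2)/b = (vA(G_1))\circ f$, and conclude $\mathcal{S}_k(A(G_1))\subseteq\mathcal{S}_k(A(G_2)/b)$. One small omission worth flagging: the first equality in your Key computation, which re-indexes $\sum_{(x,y)\in E(G_2)} v(f(x))$ as a sum over neighbors $u$ of $f(y)$ in $G_1$, also relies on clause (1) (graph homomorphism) to guarantee that every neighbor $x$ of $y$ has $f(x)$ adjacent to $f(y)$, so no terms are dropped; the paper cites both the first and third properties for this identity, whereas you credit only the third. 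Your ``Obstacle check'' handles the converse concern (no neighbors of $f(y)$ have zero preimage count) but not this one.
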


\begin{proof} The second property of uniformity implies that if $v_1,v_2,\dots,v_k\in \mathbb{R}^{V(G_1)}$ for some $k$, then $$\mathcal{D}(v_1,v_2,\dots,v_k)=\mathcal{D}(v_1\circ f,v_2\circ f,\dots,v_k\circ f).$$
Furthermore, the first and third property imply that if $v\in\mathbb{R}^{V(G_1)}$, then $vA(G_1)\circ f=(v\circ f)A(G_2)/b$. We obtain that 
$$\mathcal{D}_{A(G_1)}(\{v_i\}_{i=1}^k)=\mathcal{D}_{A(G_2)/b}(\{v_i\circ f\}_{i=1}^k),$$ and hence $\mathcal{S}_k(A(G_1))\subseteq\mathcal{S}_k(A(G_2)/b)$ holds for every $k$.
\end{proof}

Recall that if $\{X_i\}_{i=1}^\infty$ is a sequence of finite sets with maps $f_i:X_{i+1}\to X_i$, then the inverse limit $X$ is the set of elements in $(x_1,x_2,\dots)\in \prod_{i=1}^\infty X_i$ such that $f_i(x_{i+1})=x_i$ holds for every $i$. Since $X$ is a closed subset of the compact space $\prod_{i=1}^\infty X_i$, we have that $X$ is compact with respect to the subspace topology. The map $\pi_i: X\to X_i$ defined by $\pi_i(x_1,x_2,\dots):=x_i$ is a continuous map. If each $f_i$ has the property that $|f_i^{-1}(v)|=|f_i^{-1}(w)|$ holds for every $v,w\in X_{i+1}$, then there is a unique Borel probability measure $\mu$ on $X$ such that for every $i$ the push-forward measure of $\mu$ under $\pi_i$ is uniform on $X_i$. We call $\mu$ the uniform measure on $X$.

\begin{definition} Let $\{G_i,f_i\}_{i=1}^\infty$ be a uniform tower such that $G_i$ is $d_i$-regular for $i\in\mathbb{N}$. Let $V$ be the inverse limit of $\{V(G_i),f_i\}_{i=1}^\infty$. For every $x\in V$ let $N(x)\subseteq V$ denote the inverse limit of the set of neighbors of $\pi_i(x)$ and let $\nu_x$ denote the uniform measure on $N(x)$. Let $A$ be the $P$-operator in $\mathcal{B}_{2,2}(V,\mu)$ defined by $(fA)(x)=\int_{V} f~d\nu_x$. We say that $A$ is the inverse limit of the tower $\{G_i,f_i\}_{i=1}^\infty$.
\end{definition}

\begin{theorem}\label{hypthm}{\bf (Convergence of uniform towers)} Let $\{G_i,f_i\}_{i=1}^\infty$ be a uniform tower such that $G_i$ is $d_i$-regular for $i\in\mathbb{N}$. Then $\{A(G_i)/d_i\}$ is a convergent sequence of $P$-operators and the limit object is the inverse limit of $\{G_i,f_i\}_{i=1}^\infty$. 
\end{theorem}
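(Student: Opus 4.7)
The plan is to first show that the sequence of $k$-profiles $\{\mathcal{S}_k(A(G_i)/d_i)\}_{i=1}^\infty$ is monotone increasing for each fixed $k$, then build the candidate limit operator $A$ from the inverse limit data, and finally verify that $\mathcal{S}_k^*(A)$ coincides with the closure of the union of the increasing sequence. Convergence in $d_H$ will then follow automatically from monotonicity in a compact metric space.

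\textbf{Step 1 (monotonicity).} The uniform tower condition forces $d_{i+1}=b_id_i$ (apply property (3) of the uniform map to count neighbors of a vertex $v\in V(G_{i+1})$). Lemma \ref{hyplem1} applied to $f_i$ gives $A(G_i)\prec A(G_{i+1})/b_i$, and dividing by $d_i$ one obtains $A(G_i)/d_i\prec A(G_{i+1})/d_{i+1}$. In particular $\mathcal{S}_k(A(G_i)/d_i)\subseteq \mathcal{S}_k(A(G_{i+1})/d_{i+1})$ for every $k$.

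\textbf{Step 2 (construction and basic properties of $A$).} Set $V:=\varprojlim V(G_i)$ with its inverse limit topology; it is compact and totally disconnected. The projections $\pi_i:V\to V(G_i)$ are continuous and the pushforward of $\mu$ along $\pi_i$ is the uniform measure on $V(G_i)$ (by the uniformity of the tower, each $f_i$ is $a_i$-to-one). For $x\in V$, the fiber $N(x)=\varprojlim N_i(\pi_i(x))$ inherits a uniform probability measure $\nu_x$ because $f_i$ restricted to $N_{i+1}(\pi_{i+1}(x))$ is a $b_i$-to-one surjection onto $N_i(\pi_i(x))$. One checks that $x\mapsto \nu_x$ is Borel measurable (it suffices to check this on cylinder test sets, where it is automatic). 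Hence $A$ is a bona fide $P$-operator; in fact a Markov graphop, since $\nu_x$ is a probability measure and the inverse limit structure preserves symmetry.

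\textbf{Step 3 (inclusion $\mathcal{S}_k(A(G_i)/d_i)\subseteq\mathcal{S}_k(A)$).} The key intertwining identity is
\[
(v\circ\pi_i)A \;=\; \bigl(v\cdot A(G_i)/d_i\bigr)\circ\pi_i\qquad (v\in\mathbb{R}^{V(G_i)}).
\]
To verify this, observe that $\pi_i$ pushes $\nu_x$ forward to the uniform measure on $N_i(\pi_i(x))$, so $\int v\circ\pi_i\,d\nu_x=d_i^{-1}\sum_{w\in N_i(\pi_i(x))}v(w)$. Combined with the fact that $\pi_i$ is measure-preserving, we obtain
\[
\mathcal{D}_A\bigl(v_1\circ\pi_i,\dots,v_k\circ\pi_i\bigr)\;=\;\mathcal{D}_{A(G_i)/d_i}(v_1,\dots,v_k),
\]
which gives the inclusion of $k$-profiles.

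\textbf{Step 4 (density of cylinder functions).} The reverse inclusion $\mathcal{S}_k^*(A)\subseteq\overline{\bigcup_i \mathcal{S}_k(A(G_i)/d_i)}$ will follow from approximation. The $\sigma$-algebras $\mathcal{A}_i:=\pi_i^{-1}(2^{V(G_i)})$ are nested and generate the full Borel $\sigma$-algebra of $V$, so for any $v\in L^\infty_{[-1,1]}(V,\mu)$ the conditional expectations $\mathbb{E}(v\mid\mathcal{A}_i)$ are of the form $v'\circ\pi_i$ with $\|v'\|_\infty\leq 1$ and converge to $v$ in $L^2$. Since $A$ is $L^2$-bounded (Lemma \ref{closedprop3}), applying $A$ preserves this $L^2$-convergence, and by Lemma \ref{coupdist2} the joint distributions $\mathcal{D}_A(v_1^{(i)},\dots,v_k^{(i)})$ of the cylinder approximations converge in $d_{\rm LP}$ to $\mathcal{D}_A(v_1,\dots,v_k)$. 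Combined with Step 3, this approximation shows that every element of $\mathcal{S}_k(A)$ lies in $\overline{\bigcup_i \mathcal{S}_k(A(G_i)/d_i)}$.

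\textbf{Step 5 (Hausdorff convergence).} Steps 1, 3 and 4 together give $\mathcal{S}_k^*(A)=\overline{\bigcup_i \mathcal{S}_k(A(G_i)/d_i)}$. By Lemma \ref{limitlem1} the ambient space of $k$-profiles is compact, and for an increasing sequence of subsets of a compact metric space the Hausdorff distance to the closure of the union tends to zero (finite $\varepsilon$-net argument). Hence $d_H(\mathcal{S}_k(A(G_i)/d_i),\mathcal{S}_k^*(A))\to 0$ for every $k$, which means $d_M(A(G_i)/d_i,A)\to 0$.

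The step I expect to be most delicate is verifying in Step 2 that $x\mapsto\nu_x$ is measurable and that the resulting operator $A$ is genuinely well-defined and self-adjoint on $(V,\mu)$ (as opposed to just being defined pointwise on cylinder functions). The cleanest route is to define $A$ first on cylinder functions by $(v'\circ\pi_i)A:=(v'A(G_i)/d_i)\circ\pi_i$, check consistency under the tower maps, check $L^2$-boundedness uniformly, and then extend by density—this manoeuvre bypasses the pointwise disintegration and is where the uniform tower hypothesis really earns its keep.
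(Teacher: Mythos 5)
Your proposal is correct and follows essentially the same approach as the paper: monotonicity of the $k$-profiles via Lemma \ref{hyplem1} (using $d_{i+1}=d_ib_i$), identification of $\mathcal{S}_k^*(A)$ with the closure of the increasing union by approximating $L^2$-functions with cylinder functions $v\circ\pi_i$, and Hausdorff convergence from compactness. Your Steps 2--3 (measurability of $x\mapsto\nu_x$, the explicit intertwining identity $(v\circ\pi_i)A=(vA(G_i)/d_i)\circ\pi_i$) make explicit what the paper treats as implicit, and your closing remark about defining $A$ on cylinder functions and extending by density is the cleanest way to sidestep the disintegration question.
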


\begin{proof} Observe that $\|A(G_i)/d_i\|_2=1$ and if $f_i$ is $(a_i,b_i)$ uniform, then $d_{i+1}=d_ib_i$. We have by Lemma \ref{hyplem1} that $A(G_i)/d_i\prec A(G_{i+1})/d_{i+1}$ holds for every $i\in\mathbb{N}$. It follows by compactness that $S_k(A(G_i)/d_i)$ converges to $\cup_{i=1}^\infty S_k(A(G_i)/d_i)$ in $d_H$ as $i$ goes to infinity. Let $A\in\mathcal{B}_{2,2}(V,\mu)$ be the inverse limit of the tower $\{G_i,f_i\}_{i=1}^\infty$. By approximating measurable functions in $L^2(V,\mu)$ by functions of the form $v\circ\pi_i$ we obtain that $\mathcal{S}_k^*(A)$ is the closure of $\cup_{i=1}^\infty S_k(A(G_i)/d_i)$. 
\end{proof}

\noindent{\bf Construction of the limiting hypercube:}~ We finally arrived to the construction which allows us to determine the limit of the sequence $\{Q_{2^n}\}_{n=1}^\infty$. The main observation is that there are $(2^{2^n},2)$-uniform maps $f_n:V(Q_{2^{n+1}})\to V(Q_{2^n})$. Let $T_n$ denote the vertex set of the rooted binary tree of depth $n$. If $n=\infty$, then $T_\infty$ denotes the infinite rooted binary tree. We have that $T_n$ has $2^n$ leaves. If $v$ is not a leaf, then we denote by $\alpha_1(v)$ and $\alpha_2(v)$ the two children of $v$. Recall that $Z_2$ denotes the group with two elements. For $n\in\mathbb{N}\cup\{\infty\}$ let $G_n$ be the set of functions $f:T_n\to Z_2$ such that $f(v)=f(\alpha_1(v))+f(\alpha_2(v))$. It is clear that $G_n$ is an elementary abelian $2$-group of order $2^{2^n}$ with respect to pointwise addition. It follows that $G_n$ is a vector space of dimension $2^n$ over the field with $2$ elements. The group $G_\infty$ is the inverse limit of the groups $G_n$ and it is a compact abelian group with Haar measure $\mu$. 
Let $B$ denote the boundary of $T_\infty$. It is well known that $B$ is the Cantor set and every element $b\in B$ is uniquely characterized by an infinite path started at the root of $T_\infty$. By abusing the notation let us identify $b$ with this infinite path. Let $g_b$ denote the element in $Z_2^{T_\infty}$ that takes $1$ at the vertices of the path $b$ and $0$ otherwise. It is clear that $g_b\in G_\infty$ holds for every $b\in B$.  Let $Q:=\{g_b:b\in B\}$ and let $\nu$ be the probability measure on $Q$ obtained by first choosing $b$ uniformly in the Cantor set $B$ and then taking $g_b$. For $n\in\mathbb{N}\cup\{\infty\}$ and $m\leq n$ we denote by $\pi_{n,m}:G_n\to G_m$ the group homomorphism obtained by restricting a $Z_2$-labeling of $T_n$ to the subtree $T_m$. It is easy to see that $\pi_{\infty,n}(Q)$ is a basis in the vector space $G_n$ and thus we can represent $Q_{2^n}$ as the Cayley graph of $G_n$ with generators $\pi_{\infty,n}(Q)$. It is easy to see that the maps $$\pi_{n+1,n}:V(Q_{2^{n+1}})\to V(Q_{2^n})$$ are $(2^{2^n},2)$ uniform.

\noindent {\it It follows from Theorem $\ref{hypthm}$ that the limit object of the sequence $\{Q_{2^n}\}_{n=1}^\infty$ is basically the Cayley graph of the compact group $G_\infty\simeq Z_2^\infty$ with generators $Q$ and with uniform measure on the edges. More precisely, let $A$ denote the $P$-operator in $\mathcal{B}_{2,2}(G_\infty,\mu)$ defined by 
$$(fA)(x):=\int_{z\in Q} f(x+z)~d\nu.$$ Then the $P$-operator $A$ is the limit of the graph sequence $\{Q_{2^n}\}$.}

\medskip

\subsection{Product graphs}

The product of two graphs $G_1$ and $G_2$ is the graph on $V(G_1)\times V(G_2)$ such that $((i,j),(k,l))\in E(G_1\times G_2)$ if and only if $(i,k)\in E(G_1)$ and $(j,l)\in E(G_2)$. Graph sequences formed by the powers of a given graph are good test graphs for limit theories. We have that $2|E(G_1\times G_2)|=4|E(G_1)||E(G_2)|$ and thus $E(G^i)=2^{i-1}|E(G_i)|^i$. It follows that $$\beta:=\lim_{i\to\infty} \log |E(G^i)|/\log|V(G^i)|=\log(2|E(G)|)/\log(|V(G)|).$$ The number $0\leq\beta\leq 2$ expresses the exponent of the growth rate of the number of edges in terms of the number of vertices in $\{G^i\}_{i=1}^\infty$. One can view $G^i$ as a fractal like graph (see Figure \ref{figprod}). 

\begin{figure}[htbp]
  \centering
   
   \includegraphics[scale=0.45]{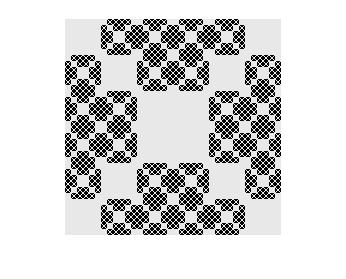}      \includegraphics[scale=0.45]{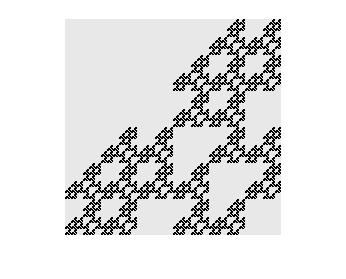}   
  \caption{The 3rd power of two different graphs on six vertices}
  \label{figprod}
\end{figure}

When $G$ is $d$-regular, we can use Theorem \ref{hypthm} to compute the limit object of $\{G^i\}_{i=1}^\infty$. The main observation is that the map $\pi_i:V(G^{i+1})\to V(G^i)$ given by the projection to the first $i$ coordinates is uniform and thus $\{G_i,\pi_i\}_{i=1}^\infty$ is a uniform tower. It is easy to see that the inverse limit is simply given as the infinite power $G^\infty$ with the uniform distribution on the vertices and on the edges. According to Theorem \ref{hypthm} this inverse limit is basically the limit of the normalized $P$-operator sequence $\{A(G^i)/d^i\}_{i=1}^\infty$. The corresponding graphop $A\in\mathcal{B}(V(G)^\infty)$ is given by
$$(vA)(x)=\mathbb{E}_{(x,y)\in E(G^\infty)} v(y),$$ where the expected value is calculated according to the product measure on the neighbors of $x$. More precisely, if $x=(x_1,x_2,\dots)\in V(G^\infty)$ is fixed, then the set of neighbors of $x$ is the infinite product $N(x_1)\times N(x_2)\times\dots$. We define $\nu_x$ as the product of the uniform measures on $N(x_1)\times N(x_2)\times\dots$ and the expected value is according to the measure $\nu_x$. 

If $G$ is not regular, then the degrees in $G^i$ are very unevenly distributed.  In this case we can use random walk convergence to get a non-trivial and natural limit object, but we skip the details.

\medskip

\subsection{Star graphs}

For every $n\geq 1$, let $G_n$ be the star graph on vertex set $\Omega_n=\{0, 1, \ldots, n-1\}$, namely, in which vertex $0$ is connected to every other vertex with a single edge. Since the operator norm of its adjacency matrix is $\sqrt n$, we should normalize by $\sqrt n$ to get a sequence of matrices with bounded operator norm.  In this case the limit will be constant $0$, which does not reflect the structure of the graphs. Therefore, instead of the adjacency operator  convergence notion, we are interested in the random walk convergence of this sequence, as it was defined in Section \ref{general}. 

Let $\nu_n$ be the stationary measure of the random walk on $G_n$. This puts weight $1/2$ to vertex $0$, and $1/(2n-2)$ everywhere else. Then the Markov operator $M_n:=M(G_n)$, which acts on $L^2(\Omega_n)$, is given as follows:
\[(vM_n)(0)=\frac{1}{n-1}\sum_{i=1}^{n-1} v(i); \quad (vM_n)(j)=v(0) \quad (j=1, \ldots, n-1),\]
where $v\in L^2(\Omega_n)$. Hence the $k$-profile of $M_n$ consists of the following probability measures on $\mathbb R^2$, where $v_1, \ldots, v_k\in L^\infty_{[-1,1]}(\Omega_n)$ are chosen arbitrarily: the measure puts weight $1/2$ to 
\[\bigg(v_1(0), \ldots, v_k(0),\, \frac{1}{n-1}\sum_{i=1}^{n-1} v_1(i), \ldots, \frac{1}{n-1}\sum_{i=1}^{n-1} v_k(i)\bigg),\]
and it puts weight $1/(2n-2)$ to 
\[\big(v_1(j), \ldots, v_k(j), v_1(0), \ldots, v_k(0)\big)\]
for each $j=1, 2, \ldots, n-1$. 

Now let $\Omega=[0,1/2]$ with the following probability measure $\nu$: it is the Lebesgue measure on $(0,1/2]$ together with an atom of weight $1/2$ at $0$. We define a $P$-operator $M$ on $L^2(\Omega)$ by 
\[(fM)(0)=2\int_{0}^{1/2} f(y) dy; \qquad (fM)(x)=f(0) \quad (0<x\leq 1/2),\]
where $f\in L^2(\Omega)$. 
Then the $k$-profile of the $P$-operator $M$ is the set of the following probability measures for $f_1, \ldots, f_k\in L^2(\Omega)$: the measure puts weight $1/2$ to 
\[\bigg(f_1(0), \ldots, f_k(0),\, 2\int_0^{1/2} f_1(y)dy, \ldots, 2\int_0^{1/2} f_k(y)dy\bigg),\]
and puts a uniform distribution (with total weight $1/2$) to 
\[\big(f_1(x), \ldots, f_k(x), f_1(0), \ldots, f_k(0)\big).\]

By comparing the profiles of $M_n$ and $M$, for every $k\geq 1$, we have that $\mathcal S_k(M_n)\subset \mathcal S_k(M)$. On the other hand, we show that every element of $\mathcal S_k(M)$ can be approximated weakly by a sequence whose $n$th term is chosen from  $\mathcal S_k(M_n)$. For every $m\geq 1$ we can choose continuous functions $f_1^{(m)}, \ldots, f_k^{(m)}\in L^2(\Omega)$ such that the $L^2$-distance of $f_s$ and $f_s^{(m)}$ is at most $1/m$ for every $s\in [k]$. Furthermore, if $m_n$  is large enough, then by choosing $v_s(j)=f_s(j/m_n)$, we can find an element of $\mathcal S_k(M_{m_n})$ whose L\'evy--Prokhorov distance from  the probability measure corresponding to $f_1, \ldots, f_k$ in $\mathcal S_k(M)$ is arbitrarily small. We conclude that the Hausdorff distance of $\mathcal S_k(M_n)$ and $\mathcal S_k(M)$ tends to $0$ as $n\rightarrow\infty$ for each $k\geq 1$, and $M$ is the limit of the sequence of star graphs with respect to random walk convergence. 

\medskip

\subsection{Subdivisions of complete graphs}

Our second example is the $2$-subdivision of the complete graph on $n$ vertices. More precisely, for $n\geq 1$, let 
\[\Omega_n=[n]\cup \{ w_{ij}, 1\leq i<j\leq n\}.\]
When $j<i$, we will use $w_{ji}=w_{ij}$. As for the edges, for every $1\leq i<j\leq n$, vertex $w_{ij}$ is connected to $i$ and $j$. This graph has $n+n(n-1)/2$ vertices and $n(n-1)$ edges.

We denote by $M_n$ the Markov operator of this graph. For every $v:\Omega_n\rightarrow \mathbb R$ and $1\leq i<j\leq n$, we have 
\[(vM_n)(i)=\frac{1}{n-1}\sum_{j\neq i} v(w_{ij}); \qquad (vM)(w_{ij})=\frac 12\big(v(i)+v(j)\big).\]
The stationary measure puts weight $1/(2n)$ to vertices from $[n]$, and weight $1/(n^2-n)$ to the other vertices. Hence the $k$-profile of $M_k$ is given by the set of probability measures putting weight $1/(2n)$ to 
\[\bigg(v_1(i), \ldots, v_k(i),\, \frac{1}{n-1}\sum_{j\neq i} v_1(w_{ij}),\ldots, \frac{1}{n-1}\sum_{j\neq i} v_k(w_{ij})\bigg),\]
and weight $1/(n^2-n)$ to 
\[\bigg(v_1(w_{ij}), \ldots, v_k(w_{ij}), \, \frac 12\big(v_1(i)+v_1(j)\big), \ldots, \frac 12\big(v_k(i)+v_k(j)\big)\bigg),\]
where $v_1, \ldots, v_k: \Omega_n\rightarrow\mathbb R$ are arbitrary functions.

Let $[0,1]^2_\sim$ denote the set of unordered pairs $\{(x,y):x,y\in [0,1]\}$. In other words $[0,1]^2_\sim$ is the set $[0,1]^2$ factored by the equivalence $(x,y)\sim(y,x)$.
We represent the limit as a $P$-operator on $\Omega=[0,1]\cup [0,1]^2_\sim$. In this case a function $f\in L^2(\Omega)$ can be given by a pair $(f_1, f_2)$, where $f_1\in L^2([0,1])$ and $f_2\in L^2([0,1]^2_\sim)$. 
Then we define $M$ as follows: 
\[(fM)(x)=\int_0^1 f_2(x,u)\, du; \qquad (fM)(y,z)=\frac{f_1(y)+f_1(z)}{2},\]
where $x,y,z$ are all from the interval $[0,1]$. 
The $k$-profile of $M$ consists of probability measures which are the distributions of the following random variables for some functions $f^{(1)}, \ldots, f^{(k)}\in L^\infty_{[-1,1]}(\Omega)$. With probability $1/2$, we choose $x$ uniformly at random from the interval $[0,1]$ and take
\[\bigg(f^{(1)}_1(x), \ldots, f^{(k)}_1(x),\, \int_0^1 f^{(1)}_2(x,u)\, du,\ldots, \int_0^1 f^{(k)}_2(x,u)\, du\bigg).\]
Otherwise we choose $(y,z)\in[0,1]^2$ uniformly at random, and take 
\[\bigg(f^{(1)}_2(y,z),\ldots, f^{(k)}_2(y,z), \, \frac 12\big(f^{(1)}_1(y)+f^{(1)}_1(z)\big), \ldots, \frac 12\big(f^{(k)}_1(y)+f^{(k)}_1(z)\big)\bigg).\]
Similarly to the previous case, by approximating $L^2$ functions with continuous ones, it can be proved that $M$ is indeed the limit of $M_n$, and hence the sequence of $2$-subdivisions of complete graphs converges to this $P$-operator according to random walk convergence.

\medskip

\subsection{Incidence graphs of finite projective planes}

Let $q$ be a prime power and let  $\mathbb{P}(q)$ denote the projective plane over the finite field with $q$ elements. The plane $\mathbb{P}(q)$ has $q^2+q+1$ lines and $q^2+q+1$ points. We denote by $G_q$ the bipartite graph whose vertices are the lines and the points in $\mathbb{P}_q$ and the edges in $G_q$ are incidences in $\mathbb{P}(q)$. This means that a line $l$ is connected with a point $p$ if $l$ contains $p$. We have that $G_q$ is $(q+1)$-regular, $|V(G_q)|=2(q^2+q+1)$ and $|E(G_q)|=(q^2+q+1)(q+1)$.  It follows that the sequence $G_q$ is an intermediate density sequence. The number of edges is roughly the $3/2$-th power of the number of vertices.

\begin{figure}[htbp]
  \centering
   
   \includegraphics[scale=0.3]{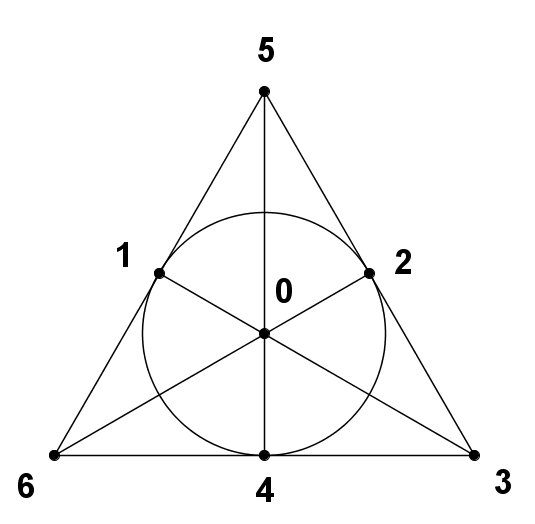}      \includegraphics[scale=0.60]{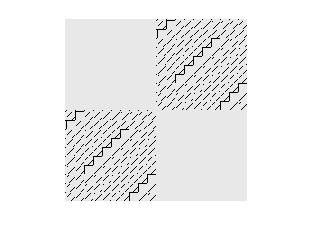}    
  \caption{The Fano plane $\mathbb{P}(2)$ and the matrix of the incidence graph of the projective plane over the field $\mathbb{F}_9$}\label{figfano}
\end{figure}

{\it We show that the matrices $A(G_q)/(q+1)$ form a convergent sequence whose limit is represented by the generalized graphon $W:[0,1]^2\to [0,1]$ defined by $W(x,y)=2$ if $(x,y)\in [1/2,1]\times [0,1/2]\cup[0,1/2]\times [1,1/2]$ and $W(x,y)=0$ elsewhere.}

The proof is based on the fact that the eigenvalues of $G_q$ are known to be $q+1,-q-1,\sqrt{q},-\sqrt{q}$ with multiplicity $1,1,q^2+q,q^2+q$. The two eigenvalues $q+1$ and $-q-1$ belong to the constant $1$ vector $v_1$ and the vector $v_2$ which takes $1$ at points and $-1$ at lines. Let $B_q:=(v_1^*v_1-v_2^*v_2)/2(q^2+q+1)$. We have that $\|A(G_q)/(q+1)-B_q\|_{2\to 2}=q^{-1/2}$. It follows from Lemma \ref{limspdm} that
$$d_M(A(G_q)/(q+1),B_q)\leq 3\|A(G_q)/(q+1)-B_q\|_{2\to 2}^{1/2}=3q^{-1/4}$$ and hence the limit $A(G_q)/(q+1)$ is the same as the limit of $B_q$ as the prime power $q$ goes to infinity. On the other hand $B_q$ is twice the normalized adjacency matrix of the complete bipartite graph with equal color classes on $2(q^2+q+1)$ points. This proves the claim.

The next question illustrates that this does not end the limiting investigation of $G_q$. We can look at it at a finer scale by removing the two dominant eigenvectors and normalizing it with a different constant.

\begin{question} Let $B'_q:=(A(G_q)/(q+1)-B_q)q^{1/2}$.  We have that $\|B'_q\|_{2\to 2}=1$. Does the sequence of $P$-operators $B'_q$ converge as the prime power $q$ goes to infinity? If yes, what is the limit object?
\end{question}

Note that by compactness we know that $B'_q$ has convergent subsequences.

\section{Appendix (technical lemmas)}

\begin{lemma}\label{coupdist} Let $X,Y$ be two jointly distributed $\mathbb{R}^k$-valued random variables. Then $d_{\rm LP}(\mathcal{D}(X),\mathcal{D}(Y))\leq \tau(X-Y)^{1/2}k^{3/4}.$
\end{lemma}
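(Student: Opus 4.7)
The plan is to leverage Strassen's classical coupling bound for the Lévy--Prokhorov distance: if $X, Y$ are jointly defined random variables on $\mathbb{R}^k$ and $\mathbb{P}(\rho(X, Y) > \varepsilon) \le \varepsilon$ for some $\varepsilon > 0$, then $d_{\rm LP}(\mathcal{D}(X), \mathcal{D}(Y)) \le \varepsilon$. This follows directly from the LP definition by splitting $\{X \in U\}$ according to whether $\rho(X, Y)$ exceeds $\varepsilon$, so for any Borel $U$ we get $\mathcal{D}(X)(U) \le \mathcal{D}(Y)(U^\varepsilon) + \varepsilon$ (and symmetrically). So the task reduces to finding $\varepsilon$ as small as possible with $\mathbb{P}(\|X - Y\|_2 > \varepsilon) \le \varepsilon$.

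Writing $d := \tau(X - Y) = \max_{1 \le i \le k} \mathbb{E}|X_i - Y_i|$, I would bound the event $\{\|X - Y\|_2 > \varepsilon\}$ by controlling each coordinate separately. For any $\eta > 0$, Markov's inequality applied coordinatewise together with a union bound gives
\[
\mathbb{P}\bigl(\max_{1 \le i \le k} |X_i - Y_i| > \eta\bigr) \;\le\; \sum_{i=1}^{k} \frac{\mathbb{E}|X_i - Y_i|}{\eta} \;\le\; \frac{k d}{\eta}.
\]
On the complementary event, $|X_i - Y_i| \le \eta$ for all $i$, which forces $\|X - Y\|_2 \le \sqrt{k}\,\eta$.

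Now I just need to balance the two sides. Setting $\varepsilon = \sqrt{k}\,\eta$ on the ``good'' bound and requiring $kd/\eta \le \varepsilon = \sqrt{k}\,\eta$ yields $\eta^2 \ge \sqrt{k}\,d$, i.e.\ $\eta = k^{1/4} d^{1/2}$, which gives $\varepsilon = k^{3/4} d^{1/2}$. With this choice,
\[
\mathbb{P}\bigl(\|X - Y\|_2 > k^{3/4} \tau(X-Y)^{1/2}\bigr) \;\le\; k^{3/4} \tau(X-Y)^{1/2},
\]
and the coupling bound from the first paragraph yields $d_{\rm LP}(\mathcal{D}(X), \mathcal{D}(Y)) \le k^{3/4} \tau(X-Y)^{1/2}$, as claimed.

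There is no real obstacle here; the only mild subtlety is the choice of norm on $\mathbb{R}^k$ implicit in the LP metric. Since the argument proceeds via $\ell^\infty$ control of $X - Y$ and then invokes $\|\cdot\|_\infty \le \|\cdot\|_2 \le \sqrt{k}\|\cdot\|_\infty$, the resulting bound is valid for any norm sandwiched between these two (in particular both Euclidean and max norms), so the statement is insensitive to this convention. The allocation $\eta = k^{1/4} d^{1/2}$ is what produces the (slightly suboptimal but clean) exponent $k^{3/4}$ that the lemma states.
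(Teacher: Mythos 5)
Your proposal is correct and takes essentially the same route as the paper: both reduce the LP bound to the Strassen-type observation that $\mathbb{P}(\|X-Y\|>\varepsilon)\le\varepsilon$ forces $d_{\rm LP}\le\varepsilon$, and then control $\|X-Y\|$ via a union bound over coordinates together with Markov's inequality, which produces exactly the $k^{3/4}\tau^{1/2}$ tradeoff. The only difference is cosmetic — you parameterize by the $\ell^\infty$ threshold $\eta$ and convert to $\ell^2$ at the end, while the paper works with the Euclidean threshold $a$ directly and splits it into the per-coordinate threshold $a/\sqrt{k}$; the resulting inequality $\mathbb{P}(|X-Y|\ge a)\le\tau k^{3/2}/a$ is the same either way.
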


\begin{proof} First we claim that for $a>0$ we have
\begin{equation}\label{eqcoup1}
\mathbb{P}(|X-Y|\geq a)\leq \tau(X-Y)k^{3/2}/a.
\end{equation}
Let $\pi_i:\mathbb{R}^k\to\mathbb{R}$ denote the $i$-th coordinate function for $1\leq i\leq k$. If the square of every coordinate of $X-Y$ is at most $a^2/k$, then $|X-Y|\leq a$. Hence  by the union bound we have that 
$$\mathbb{P}(|X-Y|\geq a)\leq\sum_{i=1}^k\mathbb{P}(|\pi_i(X-Y)|\geq a/k^{1/2})\leq \sum_{i=1}^k \mathbb{E}(|\pi_i(X-Y)|)k^{1/2}/a.$$ Since $\tau(X-Y)$ is the maximum of $\mathbb{E}(|\pi_i(X-Y)|)$ over $1\leq i\leq k$, inequality (\ref{eqcoup1}) follows.

 Let $U$ be a Borel set in $\mathbb{R}^k$. From (\ref{eqcoup1}) we have for every $\varepsilon>0$ that
\begin{equation}\label{eqcoup2}
\mathbb{P}(Y\in U^\varepsilon)\geq \mathbb{P}(X\in U)-c~~~{\rm and}~~~\mathbb{P}(X\in U^\varepsilon)\geq \mathbb{P}(Y\in U)-c,
\end{equation}
 where $c=\tau(X-Y)k^{3/2}\varepsilon^{-1}$. In particular, if $\varepsilon=\tau(X-Y)^{1/2}k^{3/4}$, then $c=\varepsilon$ and (\ref{eqcoup2}) witnesses that $d_{\rm LP}(\mathcal{D}(X),\mathcal{D}(Y))\leq \tau(X-Y)^{1/2}k^{3/4}$.

\end{proof}

\begin{lemma}\label{coupdist2} Let $v_1,v_2,\dots,v_k$ and $w_1,w_2,\dots,w_k$ be in $L^1(\Omega)$ for some probability space $\Omega$. Let $m:=\max_{i\in [k]} \|v_i-w_i\|_1$. Then
$$d_{\rm LP}(\mathcal{D}(v_1,v_2,\dots,v_k),\mathcal{D}(w_1,w_2,\dots,w_k))\leq m^{1/2}k^{3/4}.$$
\end{lemma}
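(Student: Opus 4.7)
The plan is to obtain this as an immediate specialization of Lemma \ref{coupdist}. I would view the tuples $X := (v_1, v_2, \dots, v_k)$ and $Y := (w_1, w_2, \dots, w_k)$ as $\mathbb{R}^k$-valued random variables on the common probability space $\Omega$; since $v_i, w_i \in L^1(\Omega)$, the difference $X - Y = (v_1 - w_1, \dots, v_k - w_k)$ is well defined almost everywhere and its coordinate projections lie in $L^1(\Omega)$.

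Next I would compute $\tau(X - Y)$. By the definition of $\tau$ in equation \eqref{eq:tau}, we have
$$\tau(X - Y) = \max_{1 \leq i \leq k} \mathbb{E}(|v_i - w_i|) = \max_{1 \leq i \leq k} \|v_i - w_i\|_1 = m.$$
Applying Lemma \ref{coupdist} to this pair $(X, Y)$ then yields
$$d_{\rm LP}(\mathcal{D}(X), \mathcal{D}(Y)) \leq \tau(X - Y)^{1/2} k^{3/4} = m^{1/2} k^{3/4},$$
which is exactly the inequality claimed. There is essentially no obstacle; the lemma is a restatement of Lemma \ref{coupdist} in the language of function tuples on a probability space, where the joint distribution of the tuple plays the role of the joint distribution of the coordinate random variables.
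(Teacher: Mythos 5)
Your proposal is correct and is essentially the same argument the paper gives: view the tuples as jointly distributed $\mathbb{R}^k$-valued random variables on $\Omega$, observe that $\tau(X-Y)=m$, and invoke Lemma \ref{coupdist}.
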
 

\begin{proof} We apply Lemma \ref{coupdist} to the jointly distributed random variables $X(\omega):=(v_1(\omega),v_2(\omega),\dots,v_k(\omega))$ and $Y(\omega):=(w_1(\omega),w_2(\omega),\dots,w_k(\omega))$ defined for $\omega\in\Omega$. Since $\tau(X-Y)=m$, Lemma \ref{coupdist} finishes the proof.
\end{proof}

For a real number $z\in\mathbb{R}^+$ let $q_z:\mathbb{R}\to\mathbb{R}$ denote the function such that $f_z(x)=0$ for $|x|\geq 2z$, $f_z(x)=|x+z|-z$ for $x\in [-2z,0]$ and $f_z(x)=-|x-z|+z$ for $x\in [0,2z]$.

\begin{lemma}\label{closedlem1} Let $q\in (1,\infty)$ and let $X$ be a real-valued random variable with $\mathbb{E}(|X|^q)=c<\infty$. Then for $z\in\mathbb{R}^+$ we have that $\mathbb{E} |f_z(X)-X|\leq cz^{1-q}$.
\end{lemma}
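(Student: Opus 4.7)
The plan is to establish a pointwise bound $|f_z(x) - x| \leq |x|\cdot \mathbf{1}_{\{|x| \geq z\}}$, and then use a Markov-type argument with the $L^q$ moment to control the tail.

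First I would analyze $f_z$ piecewise. On $[-z, 0]$ we have $x + z \geq 0$, so $f_z(x) = |x+z| - z = (x+z) - z = x$; symmetrically, on $[0, z]$ we have $f_z(x) = -|x - z| + z = -(z - x) + z = x$. Hence $f_z(x) = x$ whenever $|x| \leq z$, so $|f_z(x) - x| = 0$ there. On $[-2z, -z]$ we compute $f_z(x) = -(x+z) - z = -x - 2z$, giving $|f_z(x) - x| = 2|x+z| = 2(|x| - z)$; symmetrically, on $[z, 2z]$ we get $|f_z(x) - x| = 2(|x| - z)$. Since $|x| \leq 2z$ on these intervals, $2(|x| - z) \leq |x|$. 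Finally, on $|x| \geq 2z$, $f_z(x) = 0$ and so $|f_z(x) - x| = |x|$. Combining the three regimes gives the uniform bound
\[ |f_z(x) - x| \leq |x| \cdot \mathbf{1}_{\{|x| \geq z\}}. \]

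Taking expectations and applying the trivial inequality $|x| \leq |x|^q \cdot z^{1-q}$ valid on $\{|x| \geq z\}$ (since $q > 1$), we obtain
\[ \mathbb{E}|f_z(X) - X| \leq \mathbb{E}\bigl(|X|\cdot \mathbf{1}_{\{|X|\geq z\}}\bigr) \leq z^{1-q} \mathbb{E}\bigl(|X|^q \mathbf{1}_{\{|X|\geq z\}}\bigr) \leq c\, z^{1-q}, \]
which is the claimed inequality.

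There is no real obstacle here; the whole proof reduces to the pointwise case analysis in the first step, and the integration is a one-line Markov-type estimate enabled by the hypothesis $q > 1$.
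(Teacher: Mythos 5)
Your proof is correct. The key pointwise estimate $|f_z(x)-x|\leq|x|\cdot\mathbf{1}_{\{|x|\geq z\}}$ is exactly what the paper uses as well (though the paper simply asserts it, while you verify it carefully by cases). Where you diverge is the integration step: the paper applies H\"older's inequality, $\mathbb{E}(|X|\mathbf{1}_{\{|X|\geq z\}})\leq\mathbb{E}(|X|^q)^{1/q}\,\mathbb{P}(|X|\geq z)^{1/p}$ with $1/p+1/q=1$, followed by Markov's inequality $\mathbb{P}(|X|\geq z)\leq c/z^q$, and then assembles the exponents. You instead observe the pointwise algebraic inequality $|x|\leq|x|^q z^{1-q}$ on $\{|x|\geq z\}$ and integrate once. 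Both yield exactly the bound $c\,z^{1-q}$; your route is somewhat more elementary, avoiding H\"older and a conjugate exponent, and is arguably the more direct way to obtain a bound of this precise form.
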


\begin{proof} Let $p=q/(q-1)$. We have that $f_z(x)-x$ is $0$ for  $x\in[-z,z]$ and $|f_z(x)-x|\leq |x|$ for $x\in \mathbb{R}\setminus [-z,z]$. It follows from H\"older's inequality that 
\begin{align*}\mathbb{E} |f_z(X)-X|&\leq\mathbb{E}(|X|1_{\mathbb{R}\setminus[-z,z]}(X))\leq \mathbb{E}(|X|^q)^{1/q}\mathbb{E}(1_{\mathbb{R}\setminus[-z,z]})^{1/p}\\&=\mathbb{E}(|X|^q)^{1/q}\mathbb{P}(|X|\geq z)^{1/p}\leq c^{1/q}\mathbb{P}(|X|\geq z)^{1/p}.\end{align*}
By Markov's inequality we have that 
$$\mathbb{P}(|X|\geq z)=\mathbb{P}(|X|^q\geq z^q)\leq\mathbb{E}(|X|^q)/z^q\leq c/z^q.$$ This completes the proof.
 
\end{proof}

\begin{lemma}\label{closedlem2} Let $q\in (1,\infty)$.  Let $\{(X_i,Y_i)\}_{i=1}^\infty$ be a sequence of pairs of jointly distributed real valued random variables such that $X_i\in [-1,1]$ and $\mathbb{E}(|Y_i|^q)\leq c<\infty$ for some $c\in\mathbb{R}^+$. Assume that the distributions of $(X_i,Y_i)$ weakly converge to some probability distribution $(X,Y)$ as $i$ goes to infinity. Then $\mathbb{E}(|Y|^q)\leq c$ and $$\lim_{i\to\infty} \mathbb{E}(X_iY_i)=\mathbb{E}(XY).$$ 
\end{lemma}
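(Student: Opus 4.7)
The plan is to handle this with a standard truncation argument, using the function $f_z$ introduced just before the lemma, which cuts off values outside $[-2z,2z]$ in a continuous way. The obstacle is that the function $(x,y)\mapsto xy$ is unbounded, so weak convergence does not directly give convergence of expectations; the role of the $L^q$ bound is to provide the uniform integrability that makes truncation effective.

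First I will verify that $\mathbb{E}(|Y|^q)\leq c$. For each $M>0$, the map $y\mapsto \min(|y|^q,M)$ is bounded and continuous, so weak convergence of the marginals of $Y_i$ to $Y$ yields
\[
\mathbb{E}(\min(|Y|^q,M))=\lim_{i\to\infty}\mathbb{E}(\min(|Y_i|^q,M))\leq c.
\]
Letting $M\to\infty$ and applying monotone convergence gives $\mathbb{E}(|Y|^q)\leq c$. Since each $X_i$ takes values in the closed set $[-1,1]$, the weak limit $X$ is also supported in $[-1,1]$ almost surely.

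Next, fix $z>0$ and note that $(x,y)\mapsto xf_z(y)$ is bounded and continuous on $\mathbb{R}^2$. Hence weak convergence of $(X_i,Y_i)$ to $(X,Y)$ gives
\[
\lim_{i\to\infty}\mathbb{E}(X_if_z(Y_i))=\mathbb{E}(Xf_z(Y)).
\]
To control the truncation error, I will use $|X_i|\leq 1$ and $|X|\leq 1$ together with Lemma \ref{closedlem1}:
\[
|\mathbb{E}(X_iY_i)-\mathbb{E}(X_if_z(Y_i))|\leq \mathbb{E}|Y_i-f_z(Y_i)|\leq c\,z^{1-q},
\]
and the analogous bound $|\mathbb{E}(XY)-\mathbb{E}(Xf_z(Y))|\leq c\,z^{1-q}$, using $\mathbb{E}|Y|^q\leq c$ from the first step.

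Combining these three estimates by the triangle inequality yields, for every $z>0$,
\[
\limsup_{i\to\infty}|\mathbb{E}(X_iY_i)-\mathbb{E}(XY)|\leq 2c\,z^{1-q}.
\]
Since $q>1$, the right-hand side tends to $0$ as $z\to\infty$, which gives the desired convergence. The only delicate point is the uniform control of the tails of $Y_i$, and this is precisely what Lemma \ref{closedlem1} packages; the assumption $q>1$ is essential because it is what makes $z^{1-q}\to 0$.
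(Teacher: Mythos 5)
Your proof is correct and follows essentially the same route as the paper: truncate $Y$ with $f_z$, use weak convergence on the truncated product, control the error by Lemma \ref{closedlem1} together with $\mathbb{E}(|Y|^q)\leq c$, and let $z\to\infty$ (your treatment of $\mathbb{E}(|Y|^q)\leq c$ via $\min(|y|^q,M)$ and monotone convergence is just a more explicit version of the paper's weak-closedness remark). One small imprecision: $(x,y)\mapsto x f_z(y)$ is continuous but \emph{not} bounded on $\mathbb{R}^2$, so strictly speaking the test function should be $f_1(x)f_z(y)$ (as in the paper's proof); since all the laws involved are carried by $[-1,1]\times\mathbb{R}$ — a fact you already noted for $X$ — this substitution changes nothing and your argument goes through verbatim.
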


\begin{proof}The statement $\mathbb{E}(|Y|^q)\leq c$ follows from the compactness of $\{\mu:\mu\in\mathcal{P}(\mathbb{R}),\int_x |x|^q d\mu\leq c\}$ in the weak topology. 
Since $f_z$ is continuous with finite support, we have that $\lim_{i\to\infty}\mathbb{E}(f_1(X_i)f_z(Y_i))=\mathbb{E}(f_1(X)f_z(Y))$ holds for every $z\in\mathbb{R}^+$. On the other hand we have that
$$|\mathbb{E}(f_1(X_i)f_z(Y_i)-X_iY_i)|=|\mathbb{E}(X_i(f_z(Y_i)-Y_i))|\leq \mathbb{E}|f_z(Y_i)-Y_i|\leq cz^{(1-q)}$$ 
by Lemma \ref{closedlem1}, and similarly
$$|\mathbb{E}(f_1(X)f_z(Y)-XY)|\leq cz^{(1-q)}.$$
It follows that
$$\Bigl|\lim_{i\to\infty} \mathbb{E}(X_iY_i)-\mathbb{E}(XY)\Bigr|\leq 2cz^{(1-q)}$$ and hence as $z$ goes to infinity we obtain the statement of the lemma.
\end{proof}

\begin{lemma}\label{applem1} Let $\mu$ be a probability measure on $[-c,c]$ for some $c\in\mathbb{R}^+$. Let $p\in [1,\infty)$. Then $\int_{\mathbb{R}} |x|^p~d\mu\leq (2d_{\rm LP}(\mu,\delta_{0}))^p+2d_{\rm LP}(\mu,\delta_{0})c^p$
\end{lemma}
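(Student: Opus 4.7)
The plan is to combine the standard tail estimate coming from the L\'evy--Prokhorov distance with the fact that $\mu$ is supported in $[-c,c]$, and then split the integral $\int|x|^p~d\mu$ at the threshold $2\varepsilon$, where $\varepsilon:=d_{\rm LP}(\mu,\delta_0)$.

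First I would extract a tail bound. For any $\varepsilon'>\varepsilon$ and the Borel set $U:=\{x\in\mathbb{R}:|x|\geq\varepsilon'\}$, the neighborhood $U^{\varepsilon'}$ consists of points at distance strictly less than $\varepsilon'$ from $U$; in particular $0\notin U^{\varepsilon'}$, so $\delta_0(U^{\varepsilon'})=0$. Hence, by the definition of the L\'evy--Prokhorov metric,
\[
\mu(\{|x|\geq\varepsilon'\})\leq\delta_0(U^{\varepsilon'})+\varepsilon'=\varepsilon'.
\]
Taking $\varepsilon'=2\varepsilon$ (assuming $\varepsilon>0$; the case $\varepsilon=0$ forces $\mu=\delta_0$ and the inequality is trivial) gives $\mu(\{|x|\geq 2\varepsilon\})\leq 2\varepsilon$.

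Now I would split the integral at the level $2\varepsilon$ and use the two straightforward bounds: on $\{|x|<2\varepsilon\}$ the integrand is at most $(2\varepsilon)^p$, and on $\{|x|\geq 2\varepsilon\}$ it is at most $c^p$ by the support assumption. Therefore
\[
\int_{\mathbb{R}}|x|^p~d\mu=\int_{|x|<2\varepsilon}|x|^p~d\mu+\int_{|x|\geq 2\varepsilon}|x|^p~d\mu\leq(2\varepsilon)^p+c^p\cdot\mu(\{|x|\geq 2\varepsilon\})\leq(2\varepsilon)^p+2\varepsilon c^p,
\]
which is the claimed inequality. There is no real obstacle here; the only mild subtlety is being careful that $\delta_0(U^{\varepsilon'})=0$ in the L\'evy--Prokhorov inequality, which is why the threshold must be taken strictly larger than $\varepsilon$ (handled by choosing $2\varepsilon$ rather than $\varepsilon$ itself).
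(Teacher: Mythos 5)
Your proof is correct and follows essentially the same route as the paper: both obtain the tail bound $\mu(\{|x|\geq 2d\})\leq 2d$ with $d=d_{\rm LP}(\mu,\delta_0)$ (the paper tests the LP inequality at $U=\{0\}$ to get $\mu([-2d,2d])\geq 1-2d$, while you test it at the complementary tail set; these are two sides of the same estimate), and then both split $\int|x|^p\,d\mu$ at the threshold $2d$, bounding the inner part by $(2d)^p$ and the outer part by $c^p$ times the tail mass. Your explicit handling of the degenerate case $d=0$ and of the strict inequality in the definition of $U^{\varepsilon'}$ is a welcome bit of extra care, but it does not change the argument.
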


\begin{proof} Let $d:=d_{\rm LP}(\mu,\delta_{0})$. We have that $1=\delta_0(\{0\})\leq \mu([-2d,2d])+2d$ and so $\mu([-2d,2d])\geq 1-2d$. It follows that
$$\int_{\mathbb{R}} |x|^p~d\mu=\int_{[-2d,2d]}|x|^p~d\mu+\int_{\mathbb{R}\setminus [-2d,2d]}|x|^p~d\mu\leq (2d)^p+2dc^p.$$
\end{proof}

\begin{lemma}\label{applem2} Let $p\in [1,\infty)$ and let $A\in\mathcal{B}(\Omega)$ be a $P$-operator. Let  $v_i$ and $w_i$ be elements in $L^\infty(\Omega)$ with values in $[-1,1]$ for $i\in [k]$. Then we have
$$d_{\rm LP}(\mathcal{D}_A(\{v_i\}_{i=1}^k),\mathcal{D}_A(\{w_i\}_{i=1}^k))\leq m^{1/2}((2d)^p+2^{p+1}d)^{1/(2p)}(2k)^{3/4},$$ where $m=\max\{1,\|A\|_{p\to 1}\}$ and $d=\max_{i\in [k]}\{d_{\rm LP}(\mathcal{D}(v_i-w_i),\delta_0)\}$.
\end{lemma}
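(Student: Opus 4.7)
The plan is to reduce Lemma \ref{applem2} to the two previous appendix lemmas, namely Lemma \ref{coupdist2} (which controls $d_{\rm LP}$ in terms of $L^1$ differences of coordinates) and Lemma \ref{applem1} (which controls $L^p$ norms of bounded functions in terms of their L\'evy--Prokhorov distance to $\delta_0$).

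First, I would apply Lemma \ref{coupdist2} to the two $2k$-tuples of functions
\[(v_1,\dots,v_k, v_1A,\dots,v_kA)\quad\text{and}\quad (w_1,\dots,w_k, w_1A,\dots,w_kA)\]
viewed as $\mathbb{R}$-valued random variables on $\Omega$. This gives
\[d_{\rm LP}(\mathcal{D}_A(\{v_i\}),\mathcal{D}_A(\{w_i\}))\leq M^{1/2}(2k)^{3/4},\]
where $M$ is the largest of the $2k$ quantities $\|v_i-w_i\|_1$ and $\|v_iA-w_iA\|_1$. Thus it remains to bound $M$ by $m\|v_i-w_i\|_p$ and then bound $\|v_i-w_i\|_p$ in terms of $d$.

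For the first reduction, on a probability space the inequality $\|\cdot\|_1\leq\|\cdot\|_p$ gives $\|v_i-w_i\|_1\leq\|v_i-w_i\|_p$, while the operator norm inequality gives $\|v_iA-w_iA\|_1=\|(v_i-w_i)A\|_1\leq\|A\|_{p\to 1}\|v_i-w_i\|_p$ (this is where we use that $v_i-w_i\in L^\infty(\Omega)$, so it lies in the domain of $A$). Hence each of the $2k$ quantities above is at most $m\|v_i-w_i\|_p$, with $m=\max\{1,\|A\|_{p\to 1}\}$ (if $\|A\|_{p\to 1}=\infty$ the statement is vacuous).

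For the second reduction, observe that $v_i-w_i$ takes values in $[-2,2]$ since both $v_i,w_i$ take values in $[-1,1]$. Apply Lemma \ref{applem1} to the distribution $\mathcal{D}(v_i-w_i)$ with $c=2$ and exponent $p$ to obtain
\[\|v_i-w_i\|_p^p=\int |x|^p\,d\mathcal{D}(v_i-w_i)\leq (2d_i)^p+2d_i\cdot 2^p\leq (2d)^p+2^{p+1}d,\]
where $d_i:=d_{\rm LP}(\mathcal{D}(v_i-w_i),\delta_0)\leq d$. Taking the $p$-th root and combining with the previous bounds yields $M\leq m\bigl((2d)^p+2^{p+1}d\bigr)^{1/p}$, and then the Lemma \ref{coupdist2} bound $M^{1/2}(2k)^{3/4}$ gives exactly the claimed inequality.

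No real obstacle is expected: every piece is in place from the preceding appendix lemmas. The only mild subtlety is making sure the $L^p$ estimate of Lemma \ref{applem1} is applied to the correct signed random variable $v_i-w_i$ on the interval $[-2,2]$ (rather than to $v_i-w_i$ thought of as living on a larger interval), and checking that the two cases $\|v_i-w_i\|_1$ and $\|(v_i-w_i)A\|_1$ can be handled uniformly by the single bound $m\|v_i-w_i\|_p$.
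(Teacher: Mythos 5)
Your proof is correct and takes essentially the same route as the paper: apply Lemma \ref{applem1} with $c=2$ to each $v_i-w_i$, then push through the operator bound to control $\|v_iA-w_iA\|_1$, and finish with Lemma \ref{coupdist2} applied to the two $2k$-tuples. (Incidentally, you correctly work with $\|v_iA-w_iA\|_1$ where the paper's terse argument writes $\|v_iA-w_iA\|_p$, an evident typo since $\|A\|_{p\to 1}$ bounds the $L^1$ norm of the image.)
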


\begin{proof} We have by Lemma \ref{applem1} that $\|v_i-w_i\|_p\leq ((2d)^p+2^{p+1}d)^{1/p}$ for every $i\in[k]$. It follows that $\|v_iA-w_iA\|_p\leq  \|A\|_{p\to 1} ((2d)^p+2^{p+1}d)^{1/p}$ holds for every $i\in[k]$. Then Lemma \ref{coupdist2} finishes the proof.
\end{proof}

\subsection*{Acknowledgement.} The research leading to these results has received funding from the European Research Council under the European Union's Seventh Framework Programme (FP7/2007-2013) / ERC grant agreement n$^{\circ}$617747. The research was partially supported by the MTA R\'enyi Institute Lend\"ulet Limits of Structures Research Group. The first author was supported by the "Bolyai \"Oszt\"ond\'ij" of the Hungarian Academy of Sciences.

\textsc{\'Agnes Backhausz.} ELTE E\"otv\"os Lor\'and University, Budapest, Hungary, Faculty of Science, Department of Probability and Statistics and MTA Alfr\'ed R\'enyi Institute of Mathematics. P\'azm\'any P\'eter s\'et\'any 1/c, Budapest, Hungary, H-1117. \texttt{agnes@math.elte.hu}

\textsc{Bal\'azs Szegedy.} MTA Alfr\'ed R\'enyi Institute of Mathematics, Re\'altanoda utca 13--15., Budapest, Hungary, H-1053.  \texttt{szegedy.balazs@renyi.mta.hu}

\end{document}